\documentclass[reqno]{amsart}
\usepackage{amssymb}
\usepackage{amsthm}
\usepackage{bbm}
\usepackage{graphicx}

\usepackage[colorlinks=true]{hyperref}
\usepackage[all]{xypic}

\usepackage[final]{epsfig}
\usepackage{psfrag}
\usepackage{epstopdf}

\newtheorem{thm}{Theorem}[section]
\newtheorem{theorem}[thm]{Theorem}
\newtheorem{proposition}[thm]{Proposition}
\newtheorem{corollary}[thm]{Corollary}
\newtheorem{lemma}[thm]{Lemma}
\newtheorem{cor}[thm]{Corollary}
\newtheorem{lem}[thm]{Lemma}
\newtheorem{prop}[thm]{Proposition}

\theoremstyle{definition}
\newtheorem{definition}{Definition}[section]

\theoremstyle{observation}

\theoremstyle{definition}
\newtheorem{example}{Example}[section]

\newcommand{\B}{\mathcal{B}}

\newcommand{\defin}[1]{{\it #1}}

\newcommand{\N}{\mathbb{N}}
\newcommand{\Q}{\mathbb{Q}}

\theoremstyle{theorem}

{\bf}{\it}
\newtheorem*{bpt}{Beurling Projection Theorem}{\bf}{\it}
\newtheorem*{riemann-mapping-theorem}{Riemann Mapping  Theorem}
\newtheorem*{onequarter}{Koebe One-Quarter Theorem}{\bf}{\it}
\newtheorem*{distortion}{Koebe Distortion  Theorem}{\bf}{\it}
{\bf}{\it}
{\bf}{\it}
{\bf}{\it}
{\bf}{\it}
{\bf}{\it}
\newtheorem*{carat}{Carath{\'e}odory Theorem}{\bf}{\it}
\newtheorem*{baby-carat}{Carath{\'e}odory Theorem for locally connected domains}{\bf}{\it}
{\bf}{\it}
\newtheorem*{babythm}{Baby Theorem}{\bf}{\it}

\theoremstyle{remark}
\newtheorem{remark}[thm]{Remark}

\newenvironment{pf*}[1]{\proof[#1]}{\endproof}
\usepackage{euscript}

\newcommand{\cal}[1]{{\mathcal #1}}

\newcommand{\beq}{\begin{equation}}
\newcommand{\eeq}{\end{equation}}

\newtheorem{defn}{Definition}[section]

\newtheorem{rem}{Remark}[section]

\newcommand{\riem}{{\widehat{\CC}}}
\newcommand{\diam}{\operatorname{diam}}
\newcommand{\dist}{\operatorname{dist}}

\newcommand{\eps}{\epsilon}




\renewcommand{\Re}{\operatorname{Re}}
\renewcommand{\arg}{\operatorname{arg}}
\renewcommand{\Im}{\operatorname{Im}}

\numberwithin{equation}{section}
\newcommand{\thmref}[1]{Theorem~\ref{#1}}
\newcommand{\propref}[1]{Proposition~\ref{#1}}
\newcommand{\secref}[1]{\S\ref{#1}}
\newcommand{\lemref}[1]{Lemma~\ref{#1}}
\newcommand{\corref}[1]{Corollary~\ref{#1}}

\newcommand{\cI}{{\cal I}}
\newcommand{\cA}{{\cal A}}

\newcommand{\cB}{{\cal B}}

\newcommand{\cO}{{\cal O}}

\newcommand{\cS}{{\cal S}}

\newcommand{\CC}{{\mathbb C}}
\newcommand{\RR}{{\mathbb R}}

\newcommand{\ZZ}{{\mathbb Z}}
\newcommand{\NN}{{\mathbb N}}
\newcommand{\DD}{{\mathbb D}}

\newcommand{\QQ}{{\mathbb Q}}

\newcommand{\ov}[1]{\overline{#1}}

\newcommand{\ignore}[1]{{}}

\title{Computable Carath{\'e}odory Theory}
\author{Ilia Binder, Cristobal Rojas, Michael Yampolsky}
\thanks{I.B. and M.Y. were partially supported by NSERC Discovery Grants. C.R. was partially supported by project FONDECYT  No. 11110226. }
\date{\today}
\begin{document}
\begin{abstract}
Conformal Riemann mapping of the unit disk onto a simply-connected domain $W$  is a central object of study in classical Complex Analysis.
The first complete proof of the Riemann Mapping Theorem given by P.~Koebe in 1912 is constructive, and theoretical aspects of computing the
Riemann map have been extensively studied since. Carath{\'e}odory Theory describes the boundary extension of the Riemann map. In this paper
we develop its constructive version with explicit complexity bounds.

\end{abstract}

\maketitle
\section{Introduction}
Let $\DD=\{|z|<1\}\subset\CC$. The celebrated Riemann Mapping Theorem asserts:

\begin{riemann-mapping-theorem}
Suppose $ W\subset\CC$ is a simply-connected domain in the complex plane, and let $w$ be an arbitrary
point in $W$. Then there exists a unique conformal mapping
$$f:\DD\to W,\text{ such that }f(0)=w\text{ and }f'(0)>0.$$
\end{riemann-mapping-theorem}

\noindent
The inverse mapping,
$$\varphi\equiv f^{-1}: W\to\DD$$
is called the Riemann mapping of the domain $ W$ with base point $w$. The first complete proof of Riemann Mapping Theorem,
given by P. Koebe in 1912 \cite{Koebe} was constructive. Computation of the mapping $\varphi$ is important for applications, and
numerous algorithms have been implemented in practice (see \cite{Mar,MR}). Theoretical aspects of computing the Riemann mapping
were studied exhaustively in \cite{BBY3}, where a precise complexity bound on such algorithms was established.

The theory of Carath{\'e}odory (see e.g. \cite{Mil,Pom75})
 deals with the question of extending the map $f$ to the unit circle. It is most widely known
in the case when $\partial W$ is a locally connected set. We remind the reader that a Hausdorff topological space $X$ is
called locally connected if for every point $x\in X$ and every open set $V\ni x$ there exists a connected set $U\subset V$
such that $x$ lies in the interior of $U$. Thus, every point $x\in X$ has a basis of connected, but not necessarily open,
neighborhoods. This condition is easily shown to be equivalent to the (seemingly stronger) requirement that every point $x\in X$
has a basis of open connected neighborhoods. In its simplest form, Carath{\'e}odory Theorem says:

\begin{baby-carat}
A conformal mapping $f:\DD\to W$ continuously extends to the unit circle if and only if $\partial W$ is locally connected.
\end{baby-carat}

\noindent
A natural question from the point of view of Computability Theory is then the following:

\medskip
\noindent
{\sl Suppose the boundary of the domain $ W$ is described in some constructive fashion. Can the Carath{\'e}odory extension
$f:S^1\to\partial W$ be computed?}

\medskip
\noindent
In this paper we will do a lot more than give an answer to the above question -- we will build a constructive Carath{\'e}odory
theory for a general domain $ W$ with explicit complexity bounds. Before we can proceed with it, we need to give a brief
introduction to Computability Theory over the reals (\secref{sec:intro-comp}). We introduce Carath{\'e}odory Theory in \secref{sec:carat}, and
provide some necessary background from Complex Analysis in \secref{sec:complex}.

We note that there have been several previous attempts to formulate a computable Carath{\'e}odory Theorem for locally connected
domains \cite{mcn}. We have found them lacking in both the generality of the statements and in mathematical rigour of the proofs;
the approach we take is completely independent.

 \section{Introduction to Computability}
\label{sec:intro-comp}
\subsection{Algorithms and computable functions on integers}
The notion of an algorithm was formalized in the 30's,
independently by Post, Markov, Church, and, most famously, Turing. Each of them
proposed a model of computation which determines a set of integer functions that can be \emph{computed} by some mechanical or algorithmic procedure. Later on, all these models were shown to be equivalent, so that they define the same class of integer
functions, which are now called \emph{computable (or recursive) functions}.
It is standard in Computer Science to formalize
an algorithm as a {\it Turing Machine} \cite{Tur}. We will not define it here, and instead will refer an interested
reader to any standard introductory textbook in the subject. It is more intuitively familiar, and provably equivalent,
to think of an algorithm as a program written in any standard programming language.

In any programming language there is only a countable number of possible algorithms. Fixing the language, we can
enumerate them all (for instance, lexicographically). Given such an ordered list $(\cA_n)_{n=1}^\infty$ of all
algorithms, the index $n$ is usually called the \emph{G\"odel number} of the algorithm $\cA_{n}$.

We will call a  function $f:\NN\to\NN$  \emph{computable} (or {\it recursive}), if there exists an algorithm $\cA$ which, upon input $n$, outputs $f(n)$. Computable functions of several integer variables are defined in the same way.

A function $f:W\to\NN$, which is defined on a subset $W\subset \NN$, is called {\it partial recursive} if  there exists an
algorithm $\cA$ which outputs $f(n)$ on input $n\in W$, and runs forever if the input $n\notin W$.

\subsection{Time complexity of a problem.} For an algorithm $\cA$ with input $w$ the {\it running time}  is the number
of steps $\cA$ makes before terminating with an output.
The {\it size} of an input $w$ is the number of dyadic bits required to specify $w$. Thus for $w\in\NN$, the size of $w$ is
the integer part of $\log_2 l(w)$, where $l(w)$ denotes the length of $w$.
The {\it running time of $\cA$} is
the function
$$T_\cA :\NN \to\NN$$
 such that
$$T_\cA(n) = \max\{\text{the running time of }\cA(w)\text{ for inputs } w \text{ of size }n\}.$$
In other words, $T_\cA(n)$ is the worst case running
time for inputs of size $n$.
For a computable function $f : \NN\to \NN$  the time complexity of $f$ is said to have an
upper bound $T(n)$ if there exists an algorithm $\cA$ with running time bounded
by $T(n)$ that computes $f$. We say that the time complexity of $f$ has a lower bound $T(n)$ if for every algorithm $\cA$ which computes $f$,
there is a subsequence $n_k$ such that the running time $$T_\cA(n_k)>T(n_k).$$

\subsection{Computable and semi-computable sets of natural numbers}
A set $E\subseteq \NN$ is said to be \defin{computable} if its characteristic function
$\chi_E:\NN\to\{0,1\}$ is computable. That is, if there is an algorithm $\cA:\NN \to \{0,1\}$ that, upon input $n$,  halts and outputs $1$ if $n\in E$ or $0$ if $n\notin E$. Such an algorithm allows to \emph{decide} whether or not a number $n$ is an element of $E$. Computable sets are also called \defin{recursive} or \defin{decidable}.

Since there are only countably many algorithms, there exist only countably many computable subsets of $\NN$. A well known ``explicit'' example of a non computable set is given by the \emph{Halting set}
$$H:=\{i \text{ such that }\cA_{i}\text{ halts}\}.$$
 Turing \cite{Tur} has shown that  there is no algorithmic procedure to decide, for any $i\in\NN$, whether or not the algorithm with G\"odel number $i$, $\cA_{i}$, will eventually halt.

  On the other hand, it is easy to describe an algorithmic procedure which, on input $i$, will halt if $i\in H$, and will run forever if $i\notin H$. Such a procedure can informally be described as follows: {\sl on input $i$ emulate the algorithm $\cA_i$; if $\cA_i$ halts then  halt.}

  In general, we will say that a set $E\subset \NN$ is
  \defin{lower-computable} (or
  \defin{semi-decidable}) if there exists an algorithm $\cA_E$
  which on an input $n$  halts if $n\in E$, and never halts otherwise. Thus, the algorithm $\cA_E$
  can verify the inclusion $n\in E$, but not the inclusion $n\in E^c$.
We say that $\cA_E$ {\it semi-decides} $n\in E$ (or semi-decides $E$).
 The complement of a lower-computable set is called \defin{upper-computable}.

It is elementary to verify that lower-computability is equivalent to \defin{recursive enumerability}:
\begin{prop}
A set $E\subset\NN$ is lower-computable if and only if there exists an algorithm $\cA$ which outputs a sequence
of natural numbers $(n_i)$ such that $E=\cup\{ n_i\}$.
\end{prop}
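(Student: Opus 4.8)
The plan is to establish the two implications separately; the content is entirely a matter of describing simulations, and the only genuine idea is a dovetailing trick in one direction.

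For the implication ``enumerable $\Rightarrow$ lower-computable'', I would suppose that $\cA$ outputs a sequence $(n_i)$ with $E=\bigcup\{n_i\}$ and build the semi-decision procedure $\cA_E$ as follows: on input $n$, run $\cA$ step by step, and each time $\cA$ emits a new term of its sequence, compare that term with $n$ and halt if they are equal. If $n\in E$, then $n=n_i$ for some $i$, so $\cA$ eventually emits it and $\cA_E$ halts; if $n\notin E$, no emitted term ever equals $n$, so $\cA_E$ never halts. Hence $\cA_E$ semi-decides $E$.

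For the converse, let $\cA_E$ be an algorithm that halts exactly on the inputs belonging to $E$. The naive attempt --- feed $\cA_E$ the inputs $0,1,2,\dots$ in turn and output each input on which it halts --- does not work, because $\cA_E$ may run forever on some small input while halting on larger ones, and the enumeration would be stuck forever on that input. The remedy is to \emph{dovetail}: I would iterate over $t=1,2,3,\dots$, and for each such $t$ simulate $\cA_E(n)$ for $t$ steps for every $n\le t$, outputting $n$ as soon as one of these finite simulations halts. The resulting algorithm $\cA$ outputs exactly those $n$ for which $\cA_E(n)$ halts: each $n\in E$ is produced (at the latest when $t$ exceeds both $n$ and the running time of $\cA_E(n)$), and no $n\notin E$ is ever produced.

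Two small points remain, neither of which is an obstacle. First, if $E=\emptyset$ the enumerating algorithm simply produces the empty sequence (and, symmetrically, the algorithm that never halts semi-decides $\emptyset$); if one prefers the output sequence to be infinite, then for $E\neq\emptyset$ one keeps reprinting the first value found once it has appeared. Second, in the dovetailing construction one may re-output values already seen, which is harmless since only the union $\bigcup\{n_i\}$ matters. The main --- and essentially only --- obstacle is recognizing that the backward direction requires dovetailing rather than a sequential search; the rest is routine simulation bookkeeping.
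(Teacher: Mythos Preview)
Your argument is correct and is exactly the standard proof; the paper itself does not give a proof at all, merely remarking that the equivalence ``is elementary to verify'' and leaving it as an exercise. Your dovetailing construction and the accompanying remarks on the empty set and on repeated outputs are all fine.
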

We say that $\cA$ {\it enumerates} $E$.

 The following is an easy excercise:
 \begin{prop}
 A set is computable if and only if it is simultaneously upper- and lower-computable.
 \end{prop}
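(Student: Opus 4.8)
The plan is to prove the two implications separately, each by an explicit construction of the required algorithm from the given ones.

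For the forward direction, suppose $E$ is computable, so that by definition there is an algorithm computing the characteristic function $\chi_E:\NN\to\{0,1\}$. To see that $E$ is lower-computable, I would describe the algorithm $\cA_E$ as follows: on input $n$, compute $\chi_E(n)$; if the value is $1$, halt, and otherwise enter an infinite loop. This $\cA_E$ halts precisely when $n\in E$, which is exactly the definition of lower-computability. For upper-computability, note that the complement $E^c$ is also computable — its characteristic function is $1-\chi_E$, and composing the algorithm for $\chi_E$ with the map $b\mapsto 1-b$ computes it — so the same construction applied to $E^c$ shows $E^c$ is lower-computable, i.e. $E$ is upper-computable.

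For the converse, suppose $E$ is simultaneously upper- and lower-computable. Then there is an algorithm $\cA_E$ that halts on input $n$ iff $n\in E$, and (since $E^c$ is lower-computable) an algorithm $\cB$ that halts on input $n$ iff $n\notin E$. I would then decide membership in $E$ by a dovetailing argument: on input $n$, simulate $\cA_E(n)$ and $\cB(n)$ in parallel, executing one step of each in alternation. Since $n$ belongs either to $E$ or to $E^c$, exactly one of the two simulations eventually halts; as soon as one does, output $1$ if it was $\cA_E$ and $0$ if it was $\cB$, and terminate. This procedure always halts and outputs $\chi_E(n)$, so $\chi_E$ is computable and $E$ is a computable set.

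There is essentially no serious obstacle here; the only point requiring a word of care is the parallel simulation in the second part, where one must observe that a Turing machine can interleave the step-by-step execution of two others (equivalently, that a program can run two subroutines in a round-robin fashion), and that termination of the combined process is guaranteed by the dichotomy $n\in E$ or $n\notin E$. This is the standard fact that the recursively enumerable sets whose complements are also recursively enumerable are exactly the decidable sets.
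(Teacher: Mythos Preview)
Your proof is correct and is exactly the standard argument; the paper itself omits the proof, calling the proposition ``an easy exercise.'' Your dovetailing construction for the converse direction is the expected one, and the forward direction is immediate as you wrote it.
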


Note that the Halting set $H$ is an example of a lower-computable non-computable set.

\subsection{Computability over the reals}  Strictly speaking, algorithms only work on natural numbers, but this can be easily extended to the objects  of any countable set once a bijection with integers has been established.  The operative power of an algorithm on the objects of such a numbered set obviously depends on what can be algorithmically recovered from their numbers.  For example,  the set $\mathbb{Q}$ of rational numbers can be injectively
numbered $\mathbb{Q}=\{q_0,q_1,\ldots\}$ in an \emph{effective} way: the
number $i$ of a rational $a/b$ can be computed from $a$ and $b$, and vice
versa. The abilities of algorithms on integers are then transferred to the rationals. For instance, algorithms can perform algebraic operations and decide whether or not $q_{i}>q_{j}$ (in the sense that the set $\{(i,j):q_{i}>q_{j}\}$ is decidable).

Extending algorithmic notions to functions of real numbers was pioneered by Banach and Mazur \cite{BM,Maz}, and
is now known under the name of {\it Computable Analysis}. Let us begin by giving the modern definition of the notion of computable real
number,  which goes back to the seminal paper of Turing \cite{Tur}.

\begin{definition}A real number $x$ is called
 \defin{computable} if there is a computable function $f:\NN \to \QQ$ such that $$|f(n)-x|<2^{-n};$$
A point in $\RR^n$ is computable if all its coordinates are computable real numbers. A point $z\in\CC$ is computable
if both $\Re z$ and $\Im z$ are computable.
\end{definition}

Algebraic numbers or  the familiar constants such as $\pi$, $e$, or the Feigembaum constant are all computable. However, the set of all computable numbers $\RR_C$ is necessarily
countable, as there are only countably many computable functions.

\subsection{Uniform computability} In this paper we will use algorithms to define \emph{computability} notions on more general objects. Depending on the context, these objects will take particulars names (computable, lower-computable, etc...) but the definition will always follow the scheme:

\medskip
\noindent
\emph{an object $x$ is \emph{computable} if there exists an algorithm $\cA$
satisfying the \emph{property} P($\cA,x$)}.

\medskip\noindent
For example, a real number $x$ is \emph{computable} if there
exists an algorithm $\cA$ which computes a function $f:\NN\to \QQ$ satisfying $|f(n)-x|<2^{-n}$ for all $n$.
 Each time such definition is made, a \emph{uniform version} will be implicitly defined:

\medskip
\noindent
\emph{the objects $\{x_\gamma\}_{\gamma\in\Gamma}$ are computable \defin{uniformly on
a countable set $\Gamma$} if there exists an algorithm $\cA$ with an input $\gamma\in\Gamma$, such that for all
$\gamma\in \Gamma$, $\cA_\gamma:=\cA(\gamma,\cdot)$ satisfies the \emph{property} P($\cA_\gamma,x_\gamma$)}.

\medskip
\noindent
 In our example, a sequence of reals $(x_i)_i$ is \emph{computable uniformly in $i$} if there exists
$\cA$ with two natural inputs $i$ and $n$ which computes a function $f(i,n):\NN\times\NN\to\QQ$ such that for all $i\in \NN$, the values of the function $f_{i}(\cdot):=f(i,\cdot)$ satisfy

$$|f_{i}(n)-x_{i}|<2^{-n}\text{ for all } n \in\NN.$$

\subsection{Computable metric spaces}The above definitions equip the real numbers with a computability structure. This can be extended to virtually any separable metric space, making them \emph{computable metric spaces}. We now give a short introduction. For more details, see \cite{Wei}.

\begin{definition}
A \defin{computable metric space} is a triple $(X,d,\cS)$ where:
\begin{enumerate}
\item $(X,d)$ is a separable metric space,
\item $\cS=\{s_i:i\in\N\}$ is a dense sequence of points in $X$,
\item $d(s_i,s_j)$ are computable real numbers, uniformly in $(i,j)$.
\end{enumerate}
\end{definition}

The  points in $\cS$ are called  \defin{ideal}.
\begin{example}
A basic example is to take the space $X=\RR^n$ with the usual notion of Euclidean distance $d(\cdot,\cdot)$, and
to let the set $\cS$ consist of points $\overline x=(x_1,\ldots,x_n)$ with rational coordinates. In what follows,
we will implicitly make these choices of $\cS$ and $d(\cdot,\cdot)$ when discussing computability in $\RR^n$.
\end{example}

\begin{definition}A point $x$ is \defin{computable} if there is a computable function
$f:\NN\to\NN$  such that $$|s_{f(n)}-x|<2^{-n}\text{ for all }n.$$
\end{definition}

If $x\in X$ and $r>0$, the metric ball $B(x,r)$ is defined as
$$B(x,r)=\{y\in X:d(x,y)<r\}.$$
Since the set $\cB:=\{B(s,q):s\in\cS,q\in\Q, q>0\}$ of \defin{ideal balls} is countable, we can fix an enumeration
$\B=\{B_i:i\in\N\}$.

\begin{proposition}\label{p.comp.semidec}
A point $x$ is computable if and only if the relation $x\in B_{i}$ is semi-decidable, uniformly in $i$.
\end{proposition}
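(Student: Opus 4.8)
The plan is to prove the two implications separately, converting between the sequence of ideal points that approximates $x$ and the semi-decision procedure for the ideal balls $\{B_i\}$. The two facts I expect to use are property (3) of a computable metric space (uniform computability of the distances $d(s_i,s_j)$) and density of $\cS$.

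\emph{Computability of $x$ implies that $x\in B_i$ is semi-decidable uniformly in $i$.} Fix a computable $f:\NN\to\NN$ with $d(s_{f(n)},x)<2^{-n}$ for all $n$. From an index $i$ I would effectively recover the center index $k$ and the rational radius $q>0$ with $B_i=B(s_k,q)$, using that the enumeration of the ideal balls is itself effective; then $x\in B_i$ iff $d(s_k,x)<q$. The semi-decision algorithm, on input $i$, runs through $n=1,2,\dots$: for each $n$ it computes $f(n)$ and, by property (3), a rational $r_n$ with $|r_n-d(s_k,s_{f(n)})|<2^{-n}$, and it halts once $r_n+2\cdot2^{-n}<q$ --- a decidable rational comparison. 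Since $|r_n-d(s_k,x)|<2\cdot2^{-n}$ by the triangle inequality, this halting condition certifies $d(s_k,x)<q$; and if $d(s_k,x)<q$, then for $n$ large one has $r_n+2\cdot2^{-n}<d(s_k,x)+4\cdot2^{-n}<q$, so the algorithm does halt. The whole construction depends only computably on $i$, giving uniformity.

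\emph{Semi-decidability of $x\in B_i$ uniformly in $i$ implies computability of $x$.} Let $\cA$ be the uniform semi-decision algorithm. To compute $x$ I must, given $n$, output $f(n)$ with $d(s_{f(n)},x)<2^{-n}$. I would enumerate all indices $i$ for which the rational radius of $B_i$ is $<2^{-n}$ (decidable in $\QQ$) and run $\cA$ on all of them by dovetailing; as soon as some $\cA(i_0)$ halts, I know $x\in B_{i_0}=B(s_{j_0},q_0)$ with $q_0<2^{-n}$, hence $d(s_{j_0},x)<2^{-n}$, and I output $f(n)=j_0$. Density of $\cS$ ensures the dovetailed search terminates: pick $s_j$ with $d(s_j,x)<2^{-n-1}$ and a rational $q$ with $2^{-n-1}<q<2^{-n}$; then $B(s_j,q)$ is an ideal ball of radius $<2^{-n}$ containing $x$, so $\cA$ halts on its index. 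Since $f(n)$ is produced by an algorithm taking $n$ as input, $f$ is computable and $x$ is a computable point.

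The step I expect to be the main obstacle is the second implication, specifically guaranteeing termination of the search: a semi-decision procedure never certifies that $x\notin B_i$, so I cannot simply test one fixed candidate ball; I must know a priori that, at every scale $2^{-n}$, $x$ lies in at least one ideal ball of that scale, which is exactly what density of $\cS$ supplies and why it cannot be dispensed with. Beyond that, the only care needed is bookkeeping --- confirming that the fixed enumeration $\{B_i\}$ is effective, so that centers and rational radii are recoverable from indices, and tracking strict versus non-strict inequalities, since the forward direction can only ever confirm the open condition $d(s_k,x)<q$.
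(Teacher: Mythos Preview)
Your proof is correct and follows essentially the same approach as the paper's: in the forward direction you search for an $n$ witnessing $d(s_{f(n)},s_k)+\text{(error)}<q$, and in the converse you dovetail the semi-decision procedure over small ideal balls and output the center of the first one found. If anything, you are slightly more careful than the paper in the forward direction, since you explicitly approximate the computable real $d(s_k,s_{f(n)})$ by a rational $r_n$ before making the comparison, whereas the paper writes the inequality $d(s_n,s)+2^{-n}<r$ as if it were directly decidable.
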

\begin{proof}
Assume first that $x$ is computable. We have to show that there is an algorithm $\cA$ which inputs a natural number $i$ and halts
 if and only if $x\in B_{i}$. Since $x$ is computable, for any $n$ we can produce an ideal point $s_{n}$ satisfying $|s_{n}-x|<2^{-n}$.
The algorithm $\cA$ work as follows: upon input $i$, it computes the center and radius of $B_{i}$, say $s$ and $r$. It then
searches for $n\in\NN$ such that
$$d(s_{n},s)+2^{-n}<r.$$
Evidently, the above inequality will hold for some $n$ if and only if $x\in B_i$.

Conversely, assume that the relation  $x\in B_{i}$, s semi-decidable uniformly in $i$.
To produce an ideal point $s_{n}$ satisfying $|s_{n}-x|<2^{-n}$, we only need to enumerate all ideal balls of radius $2^{-{n+1}}$ until one containing
$x$ is found. We can take $s_{n}$ to be the center of this ball.
\end{proof}

\begin{definition}
 An open set $U$ is called \defin{lower-computable} if there is a computable function $f:\NN\to\NN$
 such that $$U=\bigcup_{n\in \NN}B_{f(n)}.$$
\end{definition}

\begin{example} Let $\eps>0$ be a lower-computable real. Then the ball $B(0,\eps)$ is a lower-computable open set.
Indeed: $B(s_0,\eps)=\bigcup_{n}B(0,q_{n})$, where $(q_{n})_{n}$ is the computable sequence converging to $\eps$ from below.
\end{example}

It is not difficult to see that finite intersections or infinite unions of (uniformly) lower-computable open sets are again lower computable.
As in Proposition (\ref{p.comp.semidec}), one can show that the relation $x\in U$ is semi-decidable for a computable point $x$ and
an open lower-computable set.

We will now introduce computable functions. Let $X'$ be another computable metric space with ideal balls $\cB'=\{B_i'\}.$

\begin{definition}A function $f:X\to X'$ is \defin{computable} if the sets $f^{-1}(B'_i)$ are lower-computable open, uniformly in $i$.
\end{definition}

An immediate corollary of the definition is:

\begin{prop}
Every computable function is continuous.
\end{prop}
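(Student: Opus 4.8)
The plan is to reduce the statement to the standard topological fact that a map between topological spaces is continuous as soon as the preimage of every member of a basis (or even a subbasis) of the target is open, and then to observe that "lower-computable open'' trivially implies "open'' and that the ideal balls $\cB'=\{B_i'\}$ form a basis for the topology of $X'$.

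First I would record the easy direction of the definitions: if $U\subseteq X'$ is lower-computable open, then by definition $U=\bigcup_{n}B'_{f(n)}$ for some $f:\NN\to\NN$, and each ideal ball $B'_{f(n)}$ is a metric ball, hence open in $X'$; so $U$ is a (countable) union of open sets and therefore open. In particular, the hypothesis that the sets $f^{-1}(B_i')$ are lower-computable open implies that each $f^{-1}(B_i')$ is open in $X$.

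Next I would check that $\cB'$ is a basis for the metric topology of $X'$. Given any open $V\subseteq X'$ and any $x\in V$, choose $r>0$ with $B(x,r)\subseteq V$; by density of $\cS'$ in $X'$ pick an ideal point $s\in\cS'$ with $d'(s,x)<r/4$, and by density of $\QQ$ in $(0,\infty)$ pick a rational $q$ with $d'(s,x)<q<r/2$. Then $x\in B(s,q)$ and $B(s,q)\subseteq B(x,r)\subseteq V$, and $B(s,q)$ is one of the ideal balls $B'_i$. Hence every open $V\subseteq X'$ can be written as
$$
V=\bigcup\{\,B_i' : B_i'\subseteq V\,\}.
$$

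Finally I would combine the two observations. For any open $V\subseteq X'$,
$$
f^{-1}(V)=f^{-1}\!\Big(\bigcup\{B_i':B_i'\subseteq V\}\Big)=\bigcup\{\,f^{-1}(B_i') : B_i'\subseteq V\,\},
$$
which is a union of sets that are open in $X$ by the first step, hence open. Therefore $f$ is continuous. There is no real obstacle here: the only points requiring any care are the bookkeeping that a lower-computable open set is genuinely open and the elementary verification that the ideal balls generate the topology, both of which are routine consequences of separability of $X'$ and density of $\QQ$ in the positive reals.
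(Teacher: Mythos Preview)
Your argument is correct and is precisely the intended one: the paper treats this proposition as an immediate corollary of the definition and gives no proof, and what you have written is exactly the routine unpacking---lower-computable open sets are open, the ideal balls form a basis, hence preimages of opens are open. There is nothing to add.
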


The above definition of a computable function is concise, yet not very transparent. To give its $\eps-\delta$ version, we need another concept.
We say that a function $\phi:\NN\to\NN$ is an {\it oracle} for $x\in X$ if
$$d(s_{\phi(m)},x)<2^{-m}.$$
An algorithm may {\it query} an oracle by reading the values of the function $\phi$ for an arbitrary $n\in\NN$.
We have the following:

\begin{prop}
\label{defcomp}
A function $f:X\to X'$ is computable if and only if
there exists an algorithm $\cA$ with an oracle for $x\in X$ and an input $n\in\NN$ which outputs $s_n'\in \cS'$
such that $d(s_n',f(x))<2^{-n}.$
\end{prop}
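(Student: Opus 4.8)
I would prove the two implications separately, in each case by the standard finite-use arguments of Computable Analysis, with attention to strict versus non-strict inequalities between metric distances. Starting with \emph{computable $\Rightarrow$ oracle algorithm}: assume $f^{-1}(B'_i)=\bigcup_m B_{g(i,m)}$ for a computable $g\colon\NN\times\NN\to\NN$. The first step is to record that, for a fixed ideal ball $B_\ell=B(s,\rho)$, the relation $x\in B_\ell$ is semi-decidable relative to an oracle $\phi$ for $x$: search for $k$ with $d(s_{\phi(k)},s)+2^{-k}<\rho$, which by the triangle inequality halts precisely when $d(x,s)<\rho$. Then $\cA$, on oracle $\phi$ and input $n$, dovetails over all pairs $(i,m)$ such that $B'_i=B(c_i,r_i)$ has radius $r_i<2^{-n}$, semi-deciding $x\in B_{g(i,m)}$ for each; as soon as some pair succeeds it knows $f(x)\in B'_i$, hence $d(c_i,f(x))<r_i<2^{-n}$, and it outputs $c_i$. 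Termination holds because, by density of $\cS'$, the point $f(x)$ lies in some ideal ball of radius $<2^{-n}$, which is one of the $B'_i$, so $x\in f^{-1}(B'_i)=\bigcup_m B_{g(i,m)}$ and the relevant semi-decision halts.

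For the converse, \emph{oracle algorithm $\Rightarrow$ computable}, the central device is the finite-use principle: a halting run $\cA^\phi(n)$ queries $\phi$ at only finitely many arguments, so its output depends only on $n$ and on the finite query--answer history $w=\langle(m_1,\phi(m_1)),\dots,(m_k,\phi(m_k))\rangle$. I would set $s'_{n,w}$ to be the output of $\cA$ on input $n$ when its queries are answered according to $w$, \emph{provided} this run halts having queried exactly $m_1,\dots,m_k$ in the listed order; then $s'_{n,w}$ is a partial function of $(n,w)$ with semi-decidable domain. Next, call $w$ \emph{compatible} with an ideal ball $B(s,\rho)\subseteq X$ if $d(s,s_{v_j})+\rho<2^{-m_j}$ for every entry $(m_j,v_j)$ of $w$; this forces $B(s,\rho)\subseteq B(s_{v_j},2^{-m_j})$, so $w$ is a legitimate partial oracle for every point of $B(s,\rho)$, and it is again a semi-decidable condition. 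I would then establish the identity
$$ f^{-1}(B'_i)=\bigcup\{\,B(s,\rho)\text{ ideal}:\ \exists\,n,w\text{ with }w\text{ compatible with }B(s,\rho),\ s'_{n,w}\text{ defined},\ d(s'_{n,w},c_i)+2^{-n}<r_i\,\}, $$
where $B'_i=B(c_i,r_i)$. For $\supseteq$, one extends a compatible $w$ to a full oracle for an arbitrary $x\in B(s,\rho)$ (possible by density), observes $\cA^\phi(n)=s'_{n,w}$ by determinism, and combines $d(s'_{n,w},f(x))<2^{-n}$ with the last clause to get $f(x)\in B'_i$. For $\subseteq$, given $x$ with $f(x)\in B(c_i,r_i)$, one fixes any oracle $\phi$ for $x$, chooses $n$ with $2^{-n+1}<r_i-d(f(x),c_i)$, runs $\cA^\phi(n)$ to obtain a history $w$ and output $s'_{n,w}$ with $d(s'_{n,w},f(x))<2^{-n}$, and encloses $x$ in a small enough ideal ball compatible with $w$ (possible since each $d(s_{\phi(m_j)},x)<2^{-m_j}$ strictly). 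Since the triples $(n,w,B(s,\rho))$ on the right form a semi-decidable set and $c_i,r_i$ are computed from $i$, this exhibits $f^{-1}(B'_i)$ as a lower-computable open set uniformly in $i$, which is exactly computability of $f$.

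The step I expect to be the main obstacle is the bookkeeping in the second direction: making ``halts having queried exactly $w$'' precise so that $s'_{n,w}$ is genuinely well-defined, and assembling the three clauses of the displayed union into one semi-decidable relation; together with the recurring passage between strict and non-strict inequalities when choosing the radius $\rho$ of the enclosing ideal ball. The first direction, and the uniformity in $i$, should then be routine.
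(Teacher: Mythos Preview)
The paper does not supply a proof of this proposition; it is stated as a standard fact of Computable Analysis (with an informal gloss and an implicit reference to \cite{Wei}), so there is no argument in the paper to compare against. Your proof is the standard one and is correct: the forward direction via dovetailing over small ideal balls in the target, and the converse via the finite-use principle applied to the oracle machine, are exactly how this equivalence is established in the literature. Your anticipated ``main obstacle'' is real but only at the level of careful bookkeeping, and you have handled it correctly; in particular, the enclosure of $x$ in a compatible ideal ball works because the finitely many strict inequalities $d(s_{\phi(m_j)},x)<2^{-m_j}$ leave a uniform positive slack.
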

In other words, given an arbitrarily good approximation of the input of $f$ it is possible to constructively
approximate the value of $f$  with any desired precision.

For a computable function $f : X\to X$  the time complexity of $f$ is said to have an
upper bound $T(n)$ if there exists an algorithm $\cA$ with an oracle for $x\in X$ as described in \propref{defcomp} with running time bounded
by $T(n)$ for any $x\in X$ and any oracle $\phi$ for $x$.
 We say that the time complexity of $f$ has a lower bound $T(n)$ if for every such algorithm $\cA$, there exists $x\in X$ and an oracle $\phi$ for $x$ such that the running time of $\cA^\phi$ on input $n_k$  is at least $T(n_k).$

For a  function $f:X\to Y$ between metric spaces, we say that $h(\eps)$ is a {\it modulus of fluctuation} if 
$$\dist_Y(f(y),f(x))<h(\eps)\text{ whenever }\dist_X(y,x)<\eps.$$
Note that a continuous function from a compact metric space possesses a minimum  modulus of fluctuation: 
$$
h(\eps):=\sup_{\dist_{X}(x,x')\leq\eps}\dist_{Y}(f(x),f(x'))
$$
which satisfies $h(\eps)\searrow 0$ as $\eps\searrow 0$.

In this case, we will say that a function  $\mu:\NN \to \NN$ is the \textit{modulus of continuity} of $f$ if it is the smallest non decreasing function satisfying
$$
h(2^{-\mu(k)})\leq 2^{-k} \quad \text{ for all }\,\, k\in \NN. 
$$

%
%

We will make use of the following connection between time complexity and modulus of continuity of a function $f$. 

\begin{prop}
\label{modcon2}
Let $f:K\subset \RR^n\to\RR^m$ be a computable function from a compact set and let 
$\mu(k)$ be its modulus of continuity. Then the computational complexity of $f$ is bounded from below by $\mu(k)$. 
\end{prop}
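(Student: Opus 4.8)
The plan is to show that any algorithm computing $f$ with an oracle must, on inputs of size $k$, read enough of the oracle to "see" the variation of $f$ at scale $2^{-\mu(k)}$, and this forces at least $\mu(k)$ steps. First I would recall the minimality built into the definition of $\mu$: since $\mu$ is the smallest nondecreasing function with $h(2^{-\mu(k)})\le 2^{-k}$, there exists, for each $k$, a pair of points $x_k, x_k'\in K$ with $\dist(x_k,x_k')\le 2^{-(\mu(k)-1)}$ (or a point approaching this separation) such that $\dist(f(x_k),f(x_k'))> 2^{-(k+1)}$ — otherwise $\mu(k)-1$ would already satisfy the defining inequality (after a routine adjustment of constants, using that $h$ is the supremum so the near-extremal pair is realized or approximated). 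In other words, $f$ genuinely fluctuates by more than $2^{-k-1}$ across a ball of radius $\sim 2^{-\mu(k)}$.

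Next I would set up the adversary argument. Suppose $\cA$ is an algorithm as in \propref{defcomp} with running time bounded by $T(n)$, and suppose for contradiction that $T(\mu(k)) < \mu(k)$ along... more carefully: suppose $T(k) < \mu(k)$ for all large $k$ (this is the negation of "$T(n)\ge\mu(n)$ infinitely often" that the lower-bound definition requires). Run $\cA$ on input $k$ with an oracle $\phi$ for $x_k$. In fewer than $\mu(k)$ steps, $\cA$ can query $\phi$ at most at finitely many arguments $m_1,\dots,m_r$, all bounded by $\mu(k)-1$ (a machine running for fewer than $\mu(k)$ steps cannot even write down a query index as large as $\mu(k)$, let alone read the response). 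The key point: the oracle values $s_{\phi(m)}$ for $m\le \mu(k)-1$ only pin down $x_k$ to within $2^{-(\mu(k)-1)}$, so I can choose a second oracle $\phi'$ which is a valid oracle for $x_k'$ and agrees with $\phi$ on every argument $\le \mu(k)-1$. This requires choosing $x_k, x_k'$ close enough — within $2^{-(\mu(k)-1)}$ as above — that a common ideal point can serve (or more robustly, that the finitely many ideal points $s_{\phi(m)}$ can simultaneously be $2^{-m}$-approximations of both; a short argument using density of $\cS$ and the triangle inequality arranges this, possibly shrinking to scale $2^{-\mu(k)}$ and absorbing the factor-of-two losses into the choice of the fluctuation pair).

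Since $\cA$ sees identical oracle responses on the portion it actually reads, $\cA^{\phi}(k)$ and $\cA^{\phi'}(k)$ produce the same output $s$. But correctness forces $\dist(s,f(x_k))<2^{-k}$ and $\dist(s,f(x_k'))<2^{-k}$, hence $\dist(f(x_k),f(x_k'))<2^{-k+1}$, contradicting the fluctuation lower bound $\dist(f(x_k),f(x_k'))>2^{-(k+1)}$ once the constants are lined up (replacing $k$ by $k+2$ throughout handles the slack cleanly). Therefore no algorithm achieves running time below $\mu$ for all large $k$; equivalently, for every algorithm there is a subsequence $k_j$ and inputs/oracles on which the running time is at least $\mu(k_j)$, which is exactly the lower-bound statement. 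The main obstacle I anticipate is the bookkeeping in the "oracle-switching" step — guaranteeing that a single sequence of ideal-point responses is simultaneously valid for two distinct points at the relevant scales, and that an algorithm running for $<\mu(k)$ steps truly cannot access oracle data at resolution $2^{-\mu(k)}$ (this uses that querying argument $m$ costs at least $\Omega(\log m)$, or in a cruder model at least that the machine must write $m$ in unary/binary before the query returns); everything else is a routine diagonalization over the countably many algorithms.
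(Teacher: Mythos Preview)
Your proposal is correct and follows essentially the same approach as the paper: both argue that an algorithm running for $T(k)$ steps can extract at most $T(k)$ bits of information from the oracle, hence knows $x$ only to precision $2^{-T(k)}$, and then invoke the minimality of $\mu$ to conclude $T(k)\ge\mu(k)$. The paper's proof is a three-line sketch (choose the oracle that returns $k$-bit binary approximations, note that reading $T(k)$ bits costs $T(k)$ steps, and appeal directly to the definition of $\mu$), whereas you spell out the explicit adversary/oracle-switching argument; your worry about the cost of \emph{writing} a large query index is slightly misplaced --- the real constraint, and the one the paper uses, is that \emph{reading} a $k$-bit approximation from the oracle already costs $k$ steps.
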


\begin{proof}Let $x\in K$. 
Let $\phi(k)$ be an oracle for $x$ which on input $k$ outputs the $k$-bit binary approximation of $x$. Suppose the running time is given by $T(k)$. It takes 
 $T(k)$ computation steps to read $T(k)$ bits. Therefore, when computing the value of $f(x)$ with precision $2^{-k}$,
 the algorithm will know $x$ with precision at most $2^{-T(k)}$. Thus on a disk of diameter $2^{-T(n)}$ the value of $f(x)$ 
 cannot fluctuate by more than $2^{-k}$. But $\mu(k)$ is the smallest non decreasing function with this property. 
\end{proof}

\subsubsection{Computability of closed sets}
Having defined lower-computable open sets, we naturally proceed to the following definition.
\begin{definition}\label{d.upper.comp}
A closed set $K$  is  \defin{upper-computable}  if its complement is lower-computable. \end{definition}

\begin{definition}
A closed set $K$ is \defin{lower-computable} if the relation $K\cap B_{i}\neq \emptyset$ is semi-decidable, uniformly in $i$.
\end{definition}

\noindent
In other words, a closed set $K$ is lower-computable if there exists an algorithm $\cA$ which enumerates all ideal balls which have  non-empty intersection with $K$.

To see that this definition is a natural extension of lower computability of open sets, we note:
\begin{example}$\-$
\begin{enumerate}

\item  The closure of an ideal ball $\overline{B(s,q)}$ is lower-computable. Indeed, $B(s_{i},q_{i})\cap \overline{B(s,q)}\neq \emptyset $ if and only if $d(s,s_{n})<q+q_{n}$.

\item More generally, the closure $\ov{U}$ of any open lower-computable set $U$ is lower-computable since $B_{i}\cap \ov{U}\neq \emptyset$ if and only if there exists $s\in  B_{i}\cap U$.

\end{enumerate}
\end{example}

The following is  a useful characterization of lower-computable sets:
\begin{proposition}\label{p.comp-closed}
A closed set $K$ is lower-computable if and only if  there exists a sequence of uniformly computable  points $x_{i}\in K$ which is dense in $K$.
 \end{proposition}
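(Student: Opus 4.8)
The plan is to prove the two implications separately; the implication producing lower-computability from a dense computable sequence is straightforward, while the converse requires an actual construction.

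\emph{From a dense computable sequence to lower-computability.} Given a sequence $(x_i)$ that is uniformly computable and dense in $K$, the first observation is that for an ideal ball $B_j=B(s,q)$ one has $K\cap B_j\neq\emptyset$ if and only if $x_i\in B_j$ for some $i$: the ``if'' direction is trivial since $x_i\in K$, and the ``only if'' direction uses that $B_j$ is open (pick $y\in K\cap B_j$ and $\eps>0$ with $B(y,\eps)\subseteq B_j$) together with density of $(x_i)$ in $K$ (find $i$ with $d(x_i,y)<\eps$). Since $(x_i)$ is uniformly computable one can produce ideal approximants $t_{i,n}$ with $d(t_{i,n},x_i)<2^{-n}$, uniformly in $(i,n)$, and then ``$x_i\in B(s,q)$'' is semi-decidable uniformly in $(i,j)$: search for $n$ with $d(t_{i,n},s)+2^{-n}<q$, a semi-decidable condition since $d(t_{i,n},s)$ is a computable real and $q$ rational, exactly as in the proof of Proposition~\ref{p.comp.semidec}. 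Dovetailing this over $i$ yields an algorithm semi-deciding $K\cap B_j\neq\emptyset$ uniformly in $j$, which is precisely the definition of $K$ being lower-computable.

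\emph{From lower-computability to a dense computable sequence.} Assume $K$ is lower-computable and, to avoid a degenerate case, nonempty; then the set $J=\{j:K\cap B_j\neq\emptyset\}$ is recursively enumerable and infinite, so fix a computable enumeration $J=\{j_1,j_2,\dots\}$. It suffices to construct, uniformly in $j\in J$, a computable point $p_j\in K\cap B_j$, since then $(p_{j_m})_m$ is a sequence of uniformly computable points of $K$, and it is dense: every $y\in K$ lies in arbitrarily small ideal balls $B_j$, which then belong to $J$. To build $p_j$ I would run a nested-ball search. Starting from $B_j=B(s_0,q_0)$, and given $B(s_k,q_k)$ with $K\cap B(s_k,q_k)\neq\emptyset$, dovetail over all ideal balls $B(s',q')$ until one is found satisfying (i) $q'\le q_k/2$, (ii) $d(s_k,s')+q'<q_k$ (which forces $\overline{B(s',q')}\subseteq B(s_k,q_k)$), and (iii) $K\cap B(s',q')\neq\emptyset$; take this to be $B(s_{k+1},q_{k+1})$. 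Such a ball exists — shrink a ball around some point of $K\cap B(s_k,q_k)$ — and all three conditions are semi-decidable, (iii) being exactly the hypothesis, so the search halts. Then $q_k\le 2^{-k}q_0$ and $d(s_k,s_{k+1})<q_k$, so $(s_k)$ is Cauchy with an explicit rate; its limit $p_j$ is computable uniformly in $j$, lies in $B_j$ by the nesting from (ii), and lies in $K$ since $\dist(s_k,K)<q_k\to 0$ and $K$ is closed.

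The only substantive point, and the main obstacle, is in the second direction: semi-decidability of ``$K$ meets an \emph{open} ball'' never lets one certify that $K$ meets a \emph{closed} ball or contains a given point, so a point of $K$ cannot be exhibited directly. The nested-ball device sidesteps this by using only the supplied open-ball test while forcing the radii to zero, so that closedness of $K$ supplies the limit point automatically. Everything else is routine bookkeeping: verifying that the dovetailing runs the semi-decision procedures for (ii) and (iii) concurrently with growing time budgets over all candidate balls, and checking that the construction of $p_{j_m}$ is uniform in the enumeration index $m$.
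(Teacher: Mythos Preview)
Your proof is correct and follows essentially the same approach as the paper: for the nontrivial direction you both construct, inside each ideal ball meeting $K$, a nested sequence of ideal balls with exponentially shrinking radii, each meeting $K$, found by dovetailing the semi-decidable conditions ``closure contained in the previous ball'', ``radius small enough'', and ``$K\cap B\neq\emptyset$''; the intersection (equivalently, the limit of centers) is the desired computable point. The paper dismisses the converse direction as ``obvious'', whereas you spell it out via Proposition~\ref{p.comp.semidec}, but the substance is the same.
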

\begin{proof}
Observe that, given some ideal ball $B=B(s,q)$ intersecting $K$, the relations $\ov{B_i}\subset B$, $ q_{i}\leq 2^{-k}$ and $B_{i}\cap K\neq \emptyset$ are all semi-decidable and then we can find an exponentially decreasing sequence of ideal balls $(B_{k})$ intersecting $K$. Hence $\{x\}=\cap_k B_k$ is a computable point lying in $B\cap K$.

The other direction is obvious.
\end{proof}

\begin{definition}A closed set is \defin{computable} if it is lower and upper computable.
\end{definition}

Here is an alternative way to define a computable set. Recall that {\it Hausdorff distance} between two
compact sets $K_1$, $K_2$ is
$$\dist_H(K_1,K_2)=\inf_\eps\{K_1\subset U_\eps(K_2)\text{ and }K_2\subset U_\eps(K_1)\},$$
where $U_\eps(K)=\bigcup_{z\in K}B(z,\eps)$ stands for an $\eps$-neighborhood of a set.
The set of all compact subsets of $M$ equipped with Hausdorff distance is a metric space which we will denote
by $\text{Comp}(M)$. If $M$ is a computable metric space, then $\text{Comp}(M)$ inherits this property; the ideal
points in $\text{Comp}(M)$ are finite unions of closed ideal balls in $M$. We then have the following:

\begin{prop}
\label{comp-set-1}
A set $K\Subset M$ is computable if and only if it is a computable point in $\text{Comp}(M)$.
\end{prop}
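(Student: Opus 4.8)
The plan is to phrase everything through the computable‑metric‑space structure of $\text{Comp}(M)$. Its ideal points are the finite unions of closed ideal balls $P=\bigcup_k\overline{B(t_k,r_k)}$ (with $t_k\in\cS$, $r_k\in\QQ$, $r_k>0$), and its ideal balls are the Hausdorff balls $\{L:\dist_H(L,P)<\rho\}$, $\rho\in\QQ$, $\rho>0$; so by \propref{p.comp.semidec} applied to $\text{Comp}(M)$, the assertion ``$K$ is a computable point of $\text{Comp}(M)$'' is equivalent to: \emph{the relation $\dist_H(K,P)<\rho$ is semi-decidable, uniformly over ideal points $P$ and rationals $\rho>0$}. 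I would then prove that this is in turn equivalent to ``$K$ is a lower- and upper-computable closed set'', using two things throughout. First, $\dist_H(K,P)<\rho$ means precisely $K\subset U_\rho(P)$ and $P\subset U_\rho(K)$. Second, at each scale $\delta>0$ there is a \emph{small-ball good approximant} of $K$: cover the compact set $K$ by finitely many balls $B(z_i,\delta)$ with $z_i\in K$, replace each center by an ideal point $t_i$ with $d(z_i,t_i)<\delta$, and set $P=\bigcup_i\overline{B(t_i,2\delta)}$; then $K\subset P$ and $P\subset U_{3\delta}(K)$, so $\dist_H(K,P)<4\delta$, and $P$ uses only radius-$2\delta$ balls.

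For the implication ``computable point $\Rightarrow$ computable closed set'', I would semi-decide ``$K\cap B(s,q)\neq\emptyset$'' (that is, $d(s,K)<q$) by dovetailing a search over all ideal points $P=\bigcup_k\overline{B(t_k,r_k)}$, all rationals $\rho>0$ and all indices $k$, looking for $\dist_H(K,P)<\rho$ (semi-decidable by the hypothesis) together with $d(s,t_k)+r_k+2\rho<q$ (semi-decidable since $d(s,t_k)$ is a computable real). Such a find forces, via $t_k\in P\subset U_\rho(K)$, a point $z\in K$ with $d(s,z)<d(s,t_k)+\rho<q$; conversely, if $d(s,K)<q$, a small-ball good approximant at a fine enough rational scale supplies the witness, so the search halts. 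This gives lower-computability, by \propref{p.comp-closed}. Symmetrically, since $M\setminus K=\bigcup\{B(s,q):\overline{B(s,q)}\cap K=\emptyset\}$, I would obtain upper-computability by semi-deciding ``$d(s,K)>q$'' through a search for $(P,\rho)$ with $\dist_H(K,P)<\rho$ and $d(s,t_k)>q+r_k+2\rho$ for \emph{every} ball $\overline{B(t_k,r_k)}$ of $P$: here $K\subset U_\rho(P)$ makes the condition sufficient, and a fine good approximant (its centers lying near $K$) makes it necessary.

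For the reverse implication ``computable closed set $\Rightarrow$ computable point'', I would construct the required approximants directly. Fix $n$; using lower-computability, enumerate the ideal balls $C_1,C_2,\dots$ of rational radius $<2^{-n-1}$ that meet $K$. Each point of $K$ lies in one of these open balls, so by compactness $K\subset C_1\cup\dots\cup C_m$ for some $m$, while each closure $\overline{C_l}$ is contained in $U_{2^{-n}}(K)$. Interleaving the enumeration with a test of the inclusion $K\subset C_1\cup\dots\cup C_m$ produces such an $m$, and I would output $P_n:=\overline{C_1}\cup\dots\cup\overline{C_m}$; then $K\subset P_n\subset U_{2^{-n}}(K)$, hence $\dist_H(P_n,K)<2^{-n}$, and $(P_n)$ is computed uniformly in $n$, so $K$ is a computable point of $\text{Comp}(M)$ directly from the definition.

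The step I expect to be the main obstacle is the inclusion test in the last paragraph: recognizing algorithmically that a given finite family of ideal balls covers $K$. This is the standard fact that, for an upper-computable compact set, the relation ``$K$ is contained in a given finite union of ideal balls'' is semi-decidable --- genuine (it fails for general closed sets), but a routine property of co-c.e.\ compacta in this setting (see \cite{Wei}); via compactness of $K$ it also covers ``$K\subset U$'' for a lower-computable open $U$. The remainder is metric bookkeeping, together with the facts --- implicit in the surrounding discussion --- that closed ideal balls are compact and that $(\text{Comp}(M),\dist_H)$ is a computable metric space.
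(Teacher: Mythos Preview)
The paper states \propref{comp-set-1} (and the companion \propref{comp-set-2}) without proof, treating it as a standard fact about computable metric spaces, so there is no argument in the paper to compare against.

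Your proof is correct and well organized. The reduction via \propref{p.comp.semidec} to semi-deciding $\dist_H(K,P)<\rho$ is the natural move, and the two searches you set up for lower- and upper-computability are sound: the triangle-inequality bookkeeping with the $2\rho$ slack is right in both directions, and your ``small-ball good approximant'' device cleanly supplies the witnesses showing each search halts when it should. For the converse implication, producing $P_n$ by enumerating small ideal balls meeting $K$ and halting once they cover $K$ is exactly the right construction.

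You are also right that the inclusion test is the only substantive point, and right to flag it. Note that the paper itself isolates precisely this property a few lines later under the name \emph{computably compact}, and \propref{p.subcompact} shows that an upper-computable closed set inherits it \emph{provided the ambient space $M$ is computably compact}. Your appeal to \cite{Wei} for ``co-c.e.\ compacta in this setting'' is fine for the spaces the paper actually uses ($\RR^n$, $\CC$, compact subsets thereof), but be aware that in a completely general computable metric space upper-computability of a compact set does \emph{not} by itself yield the covering test; some effectivity hypothesis on $M$ (computable compactness, or effective local compactness) is needed. Since the paper's applications live in $\CC$, this is harmless here, but it would be worth saying explicitly that you are invoking such a hypothesis rather than leaving it implicit in ``in this setting''.
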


\begin{prop}
\label{comp-set-2}
Equivalenly, $K$ is computable if there exists an algorithm $\cA$ with a single natural input $n$, which
outputs a finite collection of closed ideal balls $\ov{B_1},\ldots, \ov{B_{i_n}}$ such that
$$\dist_H(\bigcup_{j=1}^{i_n} \ov{B_j}, K)<2^{-n}.$$
\end{prop}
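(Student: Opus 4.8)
The plan is to obtain Proposition~\ref{comp-set-2} by unwinding Proposition~\ref{comp-set-1} together with the explicit description of the computable metric space $\text{Comp}(M)$. Recall that $\text{Comp}(M)$ carries the Hausdorff metric $\dist_H$, and that its distinguished dense sequence consists of the finite unions of closed ideal balls of $M$; I would fix an effective enumeration $\{S_j\}_{j\in\NN}$ of the latter, arranged so that from the index $j$ one can algorithmically recover the finite list of ideal balls whose closures compose $S_j$, and conversely, from such a list one can compute a corresponding index $j$. Both directions are immediate because the ideal balls $\{B_i\}$ of $M$ are themselves effectively enumerated, which is part of the definition of a computable metric space.

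Next, by the definition of a computable point, $K\Subset M$ is a computable point of $\text{Comp}(M)$ precisely when there is a computable function $f:\NN\to\NN$ with $\dist_H(S_{f(n)},K)<2^{-n}$ for every $n$. Via the translation between indices $f(n)$ and explicit finite lists of ideal balls recorded in the previous step, producing such an $f$ is the same datum as producing an algorithm $\cA$ which on input $n$ outputs a finite collection $\ov{B_1},\ldots,\ov{B_{i_n}}$ of closed ideal balls with $\dist_H(\bigcup_{j=1}^{i_n}\ov{B_j},K)<2^{-n}$. The classes of such objects defined with the strict inequality $<2^{-n}$ and with the non-strict inequality $\le 2^{-n}$ coincide, since each version implies the other after shifting the precision index by one. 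Combining this with Proposition~\ref{comp-set-1}, which identifies computability of the closed set $K$ with its being a computable point of $\text{Comp}(M)$, yields the stated equivalence.

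The argument is essentially bookkeeping, and the one point that genuinely requires a check — the main ``obstacle,'' such as it is — is that the passage between a G\"odel number of an ideal point of $\text{Comp}(M)$ and the explicit finite list of ideal balls of $M$ composing it is effective in both directions; this follows at once from the effective enumeration of $\cB=\{B_i\}$ built into the definition of a computable metric space. Thus the mathematical content is carried entirely by Proposition~\ref{comp-set-1}, and Proposition~\ref{comp-set-2} is its concrete, algorithmically usable restatement, recording that a set is computable exactly when one can compute, to any prescribed accuracy in the Hausdorff metric, a finite union of closed ideal balls approximating it.
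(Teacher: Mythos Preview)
The paper does not supply a proof of Proposition~\ref{comp-set-2}; it is stated as an immediate reformulation of Proposition~\ref{comp-set-1}, with the word ``Equivalently'' signalling that the authors regard it as a direct unpacking of the definition of a computable point in $\text{Comp}(M)$. Your argument is precisely that unpacking, and it is correct: the ideal points of $\text{Comp}(M)$ are by definition finite unions of closed ideal balls of $M$, the passage between an index for such an ideal point and the explicit list of constituent balls is effective in both directions, and hence ``computable point of $\text{Comp}(M)$'' is the same data as the algorithm $\cA$ described in the proposition. Your remark about the equivalence of the strict and non-strict inequality versions is a nice touch but not needed here, since both the paper's statement and the definition of computable point use the strict inequality.
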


We recall (see for example, Theorem 5.1 from\cite{Her}):
\begin{thm}
\label{comp-riem-map}
Let $W\subset \CC$ be a simply-connected domain. Then, the following are equivalent:
\begin{itemize}
\item[(i)] $W$ is a lower-computable open set, $\partial W$ is a lower-computable closed set,
and $w\in W$ be a computable point;
\item[(ii)] The map $$f:\DD\to W, \, f(0)=w,\;f'(0)>0;\,\, \text{and its inverse}\,\, \varphi\equiv f^{-1}: W\to\DD, $$
are both computable conformal bijections.
\end{itemize}
\end{thm}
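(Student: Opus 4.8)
The plan is to establish the equivalence by constructing the Riemann map from the data in (i) via Koebe's iterative scheme, and conversely recovering the domain data from the computable maps. The implication (ii) $\Rightarrow$ (i) is the easier direction. If $\varphi=f^{-1}$ is computable, then $W=\varphi^{-1}(\DD)$ is the preimage of a lower-computable open set under a computable function, hence lower-computable open; similarly, since $f$ is computable and continuous on $\DD$, the image $f(B(0,1-2^{-n}))$ can be exhausted by ideal balls, giving another route to lower-computability of $W$. The base point $w=f(0)$ is the value of a computable function at a computable point, hence computable. For $\partial W$: one shows that $f$ extends (in the relevant sense) or, more robustly, that $\partial W = \overline{W}\setminus W$ and uses that $f(\{|z|=1-2^{-n}\})$ approximates $\partial W$ from inside. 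The precise argument requires knowing that the Carath\'eodory-type boundary behaviour is compatible with computability, but at the level of ``lower-computable closed set'' one only needs a dense sequence of uniformly computable boundary points, which can be read off from $f$ evaluated near the circle together with Koebe distortion estimates controlling how fast $f(z)$ approaches $\partial W$.

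For the main implication (i) $\Rightarrow$ (ii), I would run the classical constructive proof of the Riemann Mapping Theorem (Koebe's algorithm): starting from an initial conformal embedding of $W$ into $\DD$ normalised at $w$, one iteratively composes with square-root maps and M\"obius transformations that push the image to fill up more of the disk, and the compositions converge locally uniformly to $f^{-1}$. The key points to verify are: (a) each step of the iteration is effective, given the input data — this uses that $W$ is lower-computable open (so one can find ideal balls inside $W$ and in particular an explicit disk around $w$), that $\partial W$ is lower-computable (so one can locate boundary points to define the ``worst'' direction in which to apply the square-root trick, obtaining a definite improvement at each step), and that $w$ is computable; (b) the convergence is effective, i.e. one can bound, from the input data, how many iterations are needed to approximate $f$ and $\varphi$ to precision $2^{-n}$ — this rests on Koebe's quantitative estimate that each step increases the conformal radius by a definite multiplicative factor, together with the Koebe Distortion and One-Quarter Theorems to convert conformal-radius control into $C^0$ control on compact subsets. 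Since the excerpt cites Theorem 5.1 of \cite{Her}, I would simply invoke that reference for the full details and present the argument as a sketch, emphasising which computability hypotheses feed into which step.

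The main obstacle, and the place where care is genuinely needed, is the role of the \emph{lower-computability of $\partial W$} (as opposed to full computability): lower-computability only lets us enumerate boundary points, not certify that a given region is boundary-free. In Koebe's iteration one needs, at each stage, to identify a point of $\partial W$ (pulled back to the current image domain) that is close to the unit circle, in order to apply the square-root map there and gain a definite amount; enumerating a dense sequence of boundary points suffices for this, since we only need to \emph{find} such a point, not to know we have found the closest one. The subtle issue is uniformity of the error bound: one must check that the rate of convergence of the Koebe iteration can be bounded using only quantities that are computable from the input (e.g. an inner radius at $w$ and an outer bound on $\diam W$, both of which are obtainable — the inner radius because $W$ is lower-computable open, the outer bound because $\partial W$ being a lower-computable \emph{closed} set together with... here one actually needs $W$ bounded or some normalisation, which in the cited setting is arranged). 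I would therefore structure the proof so that the quantitative Koebe estimate is stated explicitly and its inputs are checked against the computability hypotheses, and then cite \cite{Her} for the remaining bookkeeping.
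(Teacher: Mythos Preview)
The paper does not prove this theorem at all: it is stated with the preamble ``We recall (see for example, Theorem 5.1 from \cite{Her})'' and then simply used as a black box. Your proposal correctly identifies this and explicitly says you would invoke \cite{Her} for the details, so at the level of what the paper actually does, you are in agreement.

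Your sketch of the argument behind \cite{Her} is sound and is indeed the approach taken there (Koebe's iterative square-root scheme with effective convergence bounds from the Koebe distortion and one-quarter theorems; lower-computability of $\partial W$ used only to \emph{find} boundary points, not to certify their absence). One minor point: your first route to lower-computability of $W$ in (ii)$\Rightarrow$(i), namely ``$W=\varphi^{-1}(\DD)$'', is vacuous since $\varphi$ is only defined on $W$; but your second route via $f$ and Koebe distortion is the right one, and you already flag it.
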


\subsubsection{Computably compact sets}

\begin{definition}
A set $K\subseteq X$ is called \defin{computably compact} if it is compact and there exists an algorithm $\cA$ which on input $(i_{1},\ldots ,i_{p})$ halts if and only if
 $(B_{i_{1}},\ldots ,B_{i_{p}})$ is a covering of $K$.
\end{definition}

In other words, a compact set $K$ is computably compact if we can semi-decide  the inclusion $K\subset U$, uniformly from a description of the open set $U$ as a lower computable open set.

As an example, we note that using  Proposition \ref{p.comp.semidec}, it is easy to see that a  singleton $\{x\}$ is a computably compact set if and only if  $x$ is a computable point.

\begin{proposition}\label{p.subcompact}Assume the space $X$ is computably compact. Then a closed subset $E$ is computably compact if and only if $E$ is upper-computable.
\end{proposition}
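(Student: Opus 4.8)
The plan is to prove both implications. The easy direction is that a computably compact closed subset $E\subseteq X$ is upper-computable; the harder one is the converse, which is where I expect to spend most of the effort and which genuinely uses computable compactness of the ambient space $X$.

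For the forward direction, suppose $E$ is computably compact. To show that $E$ is upper-computable, i.e.\ that $X\setminus E$ is a lower-computable open set, I would enumerate the ideal balls $B_i$ with $\ov{B_i}\cap E=\emptyset$. For a fixed ideal ball $B_i$, its complement $X\setminus\ov{B_i}$ is an open set; since $X$ is computably compact it is in particular compact, and one checks that $X\setminus\ov{B_i}$ is lower-computable uniformly in $i$ (write it as a union of ideal balls $B_j$ with $\ov{B_j}$ disjoint from $\ov{B_i}$, a semi-decidable relation since the distances between ideal points are computable). Then $\ov{B_i}\cap E=\emptyset$ is equivalent to $E\subseteq X\setminus\ov{B_i}$, which by computable compactness of $E$ is semi-decidable uniformly in $i$. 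Collecting all such $B_i$ gives a lower-computable open set; by compactness of $E$ it equals $X\setminus E$, since every point outside the closed set $E$ lies in some ideal ball whose closure misses $E$.

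For the converse, suppose $E$ is closed and upper-computable, so $X\setminus E=\bigcup_n B_{g(n)}$ for a computable $g$. I need to semi-decide, from a finite tuple $(i_1,\ldots,i_p)$, whether $(B_{i_1},\ldots,B_{i_p})$ covers $E$. The idea is that $(B_{i_1},\ldots,B_{i_p})$ covers $E$ if and only if $B_{i_1}\cup\cdots\cup B_{i_p}\cup(X\setminus E)$ covers $X$, i.e.\ $B_{i_1}\cup\cdots\cup B_{i_p}\cup B_{g(1)}\cup B_{g(2)}\cup\cdots$ is a cover of $X$. Since $X$ is computably compact and each $B_{g(n)}$ is produced algorithmically, I can dovetail: enumerate finite subcollections of $\{B_{i_1},\ldots,B_{i_p}\}\cup\{B_{g(n)}:n\in\NN\}$ and run the algorithm witnessing computable compactness of $X$ on each; if some finite subcollection covers $X$, that algorithm halts, and then I halt. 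Conversely, if $(B_{i_1},\ldots,B_{i_p})$ covers the compact set $E$, then together with the open cover $\{B_{g(n)}\}$ of $X\setminus E$ we get an open cover of the compact space $X$, which has a finite subcover, so the search terminates. This gives the desired semi-decision procedure, and $E$ is compact as a closed subset of the compact space $X$.

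The main obstacle is the converse direction, specifically making the dovetailing argument rigorous: one must be careful that the ``cover of $X$'' being witnessed is built only from ideal balls $B_j$ (the $B_{i_k}$ already are ideal balls, and the $B_{g(n)}$ are ideal balls by definition of lower-computability of $X\setminus E$), so that the computable-compactness algorithm for $X$ can indeed be invoked on them; and one must confirm that the logical equivalence ``$(B_{i_k})_k$ covers $E$'' $\iff$ ``$(B_{i_k})_k\cup(B_{g(n)})_n$ covers $X$'' is exactly right, using only that $X\setminus E=\bigcup_n B_{g(n)}$ and that $E$ is closed. Everything else is routine bookkeeping about semi-decidable relations among ideal balls.
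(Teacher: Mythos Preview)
Your proposal is correct, and for the substantive direction (upper-computable $\Rightarrow$ computably compact) it is essentially the paper's argument: the paper observes that $E\subset U$ is equivalent to $X\subset U\cup(X\setminus E)$, notes that $U\cup(X\setminus E)$ is a lower-computable open set, and invokes computable compactness of $X$; your dovetailing over finite subcollections of $\{B_{i_1},\ldots,B_{i_p}\}\cup\{B_{g(n)}\}$ is exactly how one unwinds that step in terms of the finite-cover definition rather than the equivalent ``$K\subset U$'' characterization the paper states just after the definition.

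One small difference worth noting: the paper proves only this direction and leaves the converse (computably compact $\Rightarrow$ upper-computable) unstated, whereas you supply a proof. Your argument for that direction is fine, and in fact does not use computable compactness of the ambient space $X$ at all (your mention of it is incidental); the implication holds in any computable metric space.
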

\begin{proof}
We show that for a lower-computable open set $U$, the relation $E\subset U$ is semi-decidable uniformly from a description of $U$. Since $E$ is upper computable, its complement $X\setminus E$ is a lower-computable open set. Therefore, the set $U \cup X\setminus E$ is lower computable as well.  The relation $E\subset U$ is equivalent to $X\subset U \cup X\setminus E$, which is semi-decidable since $X$ is computably compact.
\end{proof}

The following result will be used in the sequel.

\begin{theorem}
\label{invert}
Let $f: X \to Y$ be a computable bijection between computable metric spaces. If $X$ is computably compact, then $f^{-1}:Y \to X$ is computable.
\end{theorem}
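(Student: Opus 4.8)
The plan is to show directly that $f^{-1}$ is computable using the $\eps$--$\delta$ characterization from \propref{defcomp}: given an oracle for a point $y\in Y$ and a precision $n\in\NN$, we must output an ideal point $s\in\cS_X$ with $d_X(s,f^{-1}(y))<2^{-n}$. The key observation is that $X$ being computably compact lets us semi-decide inclusions of the form $X\setminus B(s_i,q)\subset U$ for any lower-computable open $U$; equivalently, since $f$ is a computable bijection, we can work on the image side. First I would enumerate the ideal balls $B(s_i,q)$ in $\cS_X$ with $q<2^{-n}$. For each such ball $B$, its complement $K_B:=X\setminus B$ is closed in the computably compact space $X$, hence computably compact (this is not literally \propref{p.subcompact}, since $X\setminus B$ need not be upper-computable for an arbitrary ideal ball; instead one uses that $\overline B$ itself is upper-computable in a computably compact space, so its complement is lower-computable open, and a finite union of ideal balls covering $K_B$ can be found — I will return to this point below).

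The heart of the argument: since $f$ is a computable bijection and $K_B\subseteq X$ is computably compact, the image $f(K_B)\subseteq Y$ is again computably compact — computable images of computably compact sets are computably compact, because semi-deciding $f(K_B)\subset V$ for a lower-computable open $V\subseteq Y$ reduces to semi-deciding $K_B\subset f^{-1}(V)$, and $f^{-1}(V)$ is lower-computable open by computability of $f$. In particular $f(K_B)$ is upper-computable, i.e. its complement $Y\setminus f(K_B)=f(B)$ is a lower-computable open subset of $Y$, uniformly in the index of $B$. Now the point $f^{-1}(y)$ lies in $B$ if and only if $y\in f(B)$, and since $f(B)$ is lower-computable open and we have an oracle for $y$, the relation $y\in f(B)$ is semi-decidable (uniformly in the index of $B$), as noted in the excerpt after the definition of lower-computable open sets. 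So the algorithm dovetails over all ideal balls $B$ of radius $<2^{-n}$, running the semi-decision procedure for $y\in f(B)$ on each; when one halts, it outputs the center $s$ of that $B$. Since the balls of radius $<2^{-n}$ cover $X$ and $f^{-1}(y)$ lies in at least one of them, the search terminates, and the output satisfies $d_X(s,f^{-1}(y))<2^{-n}$.

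The main obstacle is making precise the passage from "$X$ computably compact" to "the complement of an ideal ball is a set whose $f$-image has lower-computable open complement," i.e. that $f(B)$ is lower-computable open uniformly in $B$. The clean route is: (1) in a computably compact space every closed set that is upper-computable is computably compact (\propref{p.subcompact}); (2) $\overline B$ is upper-computable — but one must be slightly careful, as an ideal \emph{closed} ball $\overline{B(s,q)}$ need not have lower-computable open complement unless the ambient space cooperates; the correct fix is to work with ideal \emph{open} balls $B=B(s,q)$, note $X\setminus B$ is closed, and observe that in a computably compact space one can semi-decide $X\setminus B\subset U$ for lower-computable open $U$ by covering — this is exactly computable compactness of $X\setminus B$, which follows once we check $X\setminus B$ is upper-computable, i.e. $B$ is lower-computable open, which is immediate. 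Then (3) computable compactness is preserved under computable maps, giving $f(X\setminus B)$ computably compact, hence upper-computable, hence $Y\setminus f(X\setminus B)=f(B)$ lower-computable open, uniformly in $B$. Once this uniformity is in hand the dovetailing argument closes, and the running-time bookkeeping needed for \propref{defcomp} is routine.
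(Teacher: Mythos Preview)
Your proof is correct, but it runs in the opposite direction from the paper's. The paper argues: given an oracle for $y$, the set $Y\setminus\{y\}$ is lower-computable open; pulling back by the computable map $f$ gives $X\setminus\{x\}$ lower-computable open, so $\{x\}$ is upper-computable; then \propref{p.subcompact} and the remark that a singleton is computably compact iff the point is computable finish the job. You instead push forward: for each ideal ball $B\subset X$ the set $X\setminus B$ is upper-computable, hence computably compact by \propref{p.subcompact}; its image $f(X\setminus B)$ is computably compact (preservation under computable maps), hence upper-computable, so $f(B)$ is lower-computable open uniformly in $B$; this is precisely the definitional requirement for computability of $f^{-1}$. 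Your route is a clean direct verification of the definition of a computable function, at the cost of two small auxiliary facts the paper avoids --- that computable images of computably compact sets are computably compact, and that computably compact sets are upper-computable in general (the paper only needs the singleton case, which it states explicitly). Both facts are routine, so the trade-off is minor; the paper's argument is marginally shorter, yours is arguably more transparent about why $f^{-1}$ satisfies the definition.
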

\begin{proof}
Let $y\in Y$. We exhibit an algorithm $\cA$ which in presence of an oracle for $y$, computes arbitrarily good approximations of $x\in X$ such that $f(x)=y$.  Start by  computing a description of $Y\setminus\{y\}=\bigcup_{y\in B_{i}} Y\setminus \overline{B_{i}}$ as a lower computable open set.  This is possible because from a description of $y$, one can semi-decide $y\in B_{i}$, uniformly in $i$.  Since $f$ is computable, one can compute a description of the  set  $f^{-1}(Y\setminus \{y\})$ as a lower-computable open set. This gives us, in particular, a description of its complement $\{x\}$ as an upper computable closed set.  The result now follows from  Proposition \ref{p.subcompact} and the fact that a singleton $\{x\}$ is computably compact iff it is a computable point.
\end{proof}

We recall computability of suprema over  computably compact spaces.

\begin{proposition}\label{modcomp}
Let $f:X\to \RR$ be a computable function. If $X$ is computably compact, then  $\bar{x}:=\sup_{x\in X}f(x)$ is computable.
\begin{proof}
The sequence $f(s_{i})$ where the $s_{i}$'s are the ideal points of $X$ is computable and $\bar{x}=\limsup f(s_{i})$, so that $\bar{x}$ is lower-computable. To see that $\bar{x}$ is also upper-computable, note that for $q\in \QQ$, the relation $\bar{x}<q$ is equivalent to $X\subset f^{-1}(-\infty,q)$, and therefore semi-decidable.
\end{proof}
\end{proposition}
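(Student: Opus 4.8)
The plan is to establish that $\bar x := \sup_{x\in X} f(x)$ is simultaneously lower-computable and upper-computable as a real number, and then conclude by the standard fact (the real-number analogue of \propref{comp-set-1}-style reasoning, already used throughout for sets) that a real is computable precisely when it is both. I assume $X\neq\emptyset$; otherwise the statement is degenerate.

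For lower-computability, recall that the ideal points $s_i$ of $X$ form a uniformly computable dense sequence, and since $f$ is computable the reals $f(s_i)$ are uniformly computable in $i$. Because $f$ is continuous and $\{s_i\}$ is dense in the compact space $X$, we have $\sup_i f(s_i)=\sup_{x\in X} f(x)=\bar x$ (equivalently $\bar x=\limsup_i f(s_i)$, since any maximizer is a limit of a subsequence of the $s_i$). Running the approximation algorithms for all the $f(s_i)$ in parallel and, at stage $N$, outputting the maximum over $i\le N$ of the current rational lower bound for $f(s_i)$, produces a nondecreasing sequence of rationals converging up to $\bar x$. This is exactly lower-computability of $\bar x$.

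For upper-computability, it suffices to enumerate the rationals $q$ with $\bar x<q$. Now $\bar x<q$ is equivalent to $f(x)<q$ for every $x\in X$, i.e. to the inclusion $X\subseteq f^{-1}\big((-\infty,q)\big)$. Since $f$ is computable, $f^{-1}\big((-\infty,q)\big)$ is a lower-computable open set, uniformly in $q$ (it is the $f$-preimage of the uniformly lower-computable exhaustion of $(-\infty,q)$ by ideal balls). Because $X$ is computably compact, the inclusion $X\subseteq U$ is semi-decidable uniformly from a description of $U$ as a lower-computable open set. Hence $\{q\in\QQ : \bar x<q\}$ is recursively enumerable, which yields a nonincreasing sequence of rationals converging down to $\bar x$: this is upper-computability.

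Combining the two halves shows $\bar x$ is a computable real. The only genuinely delicate step is the upper bound: this is precisely where computable compactness of $X$ enters, and one must be a little careful that $f^{-1}\big((-\infty,q)\big)$ is presented \emph{uniformly} in $q$ as a lower-computable open set so that the covering semi-decision procedure applies. The strictness of the inequality also matters — $X\subseteq f^{-1}\big((-\infty,q)\big)$ detects $\bar x<q$ but cannot detect $\bar x=q$ — which is exactly why we get upper-computability (approach from above) rather than decidability of $\bar x = q$.
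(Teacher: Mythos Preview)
Your proof is correct and follows essentially the same strategy as the paper: lower-computability via the uniformly computable sequence $f(s_i)$ with $\bar x=\sup_i f(s_i)$, and upper-computability via semi-deciding $X\subseteq f^{-1}((-\infty,q))$ using computable compactness. Your write-up is more explicit about the uniformity in $q$ and the role of strict inequality, but the argument is the same.
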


\subsection{Conditional computability results}
For a real number $\alpha$ the computability of the closed Euclidean ball
$\ov{B(0,\alpha)}\subset \RR^m$ is clearly equivalent to the computability
of $\alpha$ itself. However, we may want to separate the problem of computing the radius of a Euclidean ball (the number $\alpha$) from
the problem of computing the ball when the radius is given. Intuitively, the latter should always be possible. To formalize this,
we can use the concept of oracle, as described above:

\begin{babythm}
There exists an algorithm $\cA$ with an oracle $\phi$ for a real number $\alpha$ which takes a natural number $n$ as an input,
and outputs a closed ball $\ov{B_n}$ such that
$$\dist_H(\ov{B_n}, \ov{ B(0,\alpha)})<2^{-n}.$$
\end{babythm}
\begin{proof}
The algorithm $\cA$ works as follows:\\
{\bf let} $r_n=\phi(n+1)$ (so that $|r_n-\alpha|<2^{-(n+1)}$;\\
{\bf output} $\ov{B_n}=\ov{ B(0,r_n)}.$
\end{proof}

\noindent
We will say that the ball of radius $\alpha$ is {\it conditionally computable} with an oracle for $\alpha$. Most of the computability
results in this paper will be stated as conditional theorems.

\section{Carath{\'e}odory Theory}
\label{sec:carat}
\subsection{Carath{\'e}odory Extension Theorem}
We give a very brief account of the principal elements of the theory here, for details see e.g. \cite{Mil,Pom75}.
In what follows, we fix a simply connected domain $ W\subset\CC$, and a point $w\in W$; we will
refer to such a pair as a {\it pointed domain}, and use notation $( W,w)$.
A {\it crosscut} $\gamma\subset W$ is a homeomorphic image
of the open interval $(0,1)$ such that the closure $\overline\gamma$ is homeomorphic to the closed inerval $[0,1]$ and the two endpoints
of $\overline\gamma$ lie in $\partial W$. It is not difficult to see that a crosscut divides
$ W$ into two connected components. Let $\gamma$ be a crosscut such that $w\notin\gamma$.
The component of $ W\setminus\gamma$ which does not contain $w$ is called {\it the crosscut neighborhood} of $\gamma$ in
$( W,w)$. We will denote it $N_\gamma$.

A {\it fundamental chain} in $( W,w)$ is a nested infinite sequence
$$N_{\gamma_1}\supset N_{\gamma_2}\supset N_{\gamma_3}\supset\cdots$$
of crosscut neighborhoods such that the closures of the crosscuts $\gamma_j$ are disjoint, and such that
$$\diam \gamma_j\longrightarrow 0.$$
Two fundamental chains $(N_{\gamma_j})_{j=1}^\infty$ and $(N_{\tau_j})_{j=1}^\infty$ are {\it equivalent} if every $N_{\gamma_j}$ contains
some $N_{\tau_i}$ and conversely, every $N_{\tau_i}$ contains
some $N_{\gamma_j}$.
Note that any two fundamental chains $(N_{\gamma_j})_{j=1}^\infty$ and $(N_{\tau_j})_{j=1}^\infty$ are either equivalent or eventually
disjoint, i.e. $N_{\gamma_j}\cap N_{\tau_i}=\emptyset$ for $i$ and $j$ sufficiently large.

The key concept of Carath{\'e}odory theory is a {\it prime end}, which is an equivalence class of fundamental
chains. The {\it impression} $\cI(p)$ of a prime end $p$ is a compact connected subset of $\partial  W$ defined as
follows: let  $(N_{\gamma_j})_{j=1}^\infty$ be any fundamental chain in the equivalence class $p$, then
$$\cI(p)=\cap\overline{N_{\gamma_j}}.$$
We say that the impression of a prime end is {\it trivial} if it consists of a single point. It is easy to see (cf. \cite{Mil}) that:
\begin{prop}
\label{trivial-impression}
If the boundary $\partial W$ is locally connected then the impression of every prime end is trivial.

\end{prop}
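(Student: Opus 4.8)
The plan is to show that if $\partial W$ is locally connected, then any prime end $p$ has an impression consisting of a single point. Fix a fundamental chain $(N_{\gamma_j})_{j=1}^\infty$ representing $p$, with $\diam\gamma_j\to 0$. The impression is $\cI(p)=\bigcap_j\overline{N_{\gamma_j}}$, a nested intersection of compact connected sets, hence a compact connected subset of $\partial W$; the content of the statement is that it is in fact a point. First I would observe that since $\diam\gamma_j\to 0$, the endpoints of $\gamma_j$ converge: the two endpoints $a_j,b_j\in\partial W$ satisfy $|a_j-b_j|\le\diam\gamma_j\to 0$, and passing to a subsequence we may assume $a_j,b_j\to\zeta$ for some $\zeta\in\partial W$ (using that $\overline\gamma_j\subset\overline W$ and $\overline\gamma_j$ shrinks). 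The goal is to prove $\cI(p)=\{\zeta\}$, i.e. $\diam\overline{N_{\gamma_j}}\to 0$.

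The key step is to control the diameter of the crosscut neighborhood $N_{\gamma_j}$ using local connectedness of $\partial W$. Suppose for contradiction that $\diam N_{\gamma_j}\not\to 0$; then there is $\rho>0$ and points $z_j\in N_{\gamma_j}$ (or rather in $\overline{N_{\gamma_j}}\cap\partial W$) staying at distance $\ge\rho$ from $\zeta$. Now I would use the standard fact that a compact metric space is locally connected if and only if it is \emph{uniformly locally connected}: for every $\eps>0$ there is $\delta>0$ such that any two points of $\partial W$ within distance $\delta$ lie in a connected subset of $\partial W$ of diameter $<\eps$. Apply this with $\eps<\rho/2$ to get such a $\delta$, and choose $j$ large enough that $\diam\gamma_j<\delta$ and the endpoints $a_j,b_j$ are within $\rho/4$ of $\zeta$. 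Then $a_j,b_j$ are joined by a connected set $E_j\subset\partial W$ of diameter $<\rho/2$, so $E_j$ is contained in the ball $B(\zeta,\rho)$. The crosscut $\gamma_j$ together with $E_j$ forms a closed curve (or more carefully, $\gamma_j\cup E_j$ is a connected compact set separating the plane); a point of $\partial W$ lying in the unbounded-side / $w$-side configuration relative to this curve cannot be a limit of points in $N_{\gamma_j}$. The precise topological mechanism: $N_{\gamma_j}$ is the component of $W\setminus\gamma_j$ not containing $w$, and its closure's boundary is contained in $\overline\gamma_j\cup(\partial W\cap\overline{N_{\gamma_j}})$; since $\partial W\cap\overline{N_{\gamma_j}}$ is connected, contains the endpoints of $\gamma_j$, and $\partial W$ near those endpoints is ``small'' by uniform local connectivity, one concludes $\overline{N_{\gamma_j}}\subset B(\zeta,\rho)$ for large $j$, contradicting the existence of the $z_j$.

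I expect the main obstacle to be making the last topological argument rigorous — namely, rigorously justifying that $\overline{N_{\gamma_j}}$ is small once $\gamma_j$ is small and $\partial W$ is uniformly locally connected. The cleanest route is probably to argue directly with the impression rather than with diameters of $N_{\gamma_j}$: show that any $\xi\in\cI(p)$ with $\xi\neq\zeta$ leads to a contradiction. Indeed, $\xi\in\overline{N_{\gamma_j}}$ for all $j$, and $\zeta\in\overline{N_{\gamma_j}}$ for all $j$ as well (it is the limit of the endpoints); so $\overline{N_{\gamma_j}}$ contains both, meaning $\diam\overline{N_{\gamma_j}}\ge|\xi-\zeta|>0$ for all $j$. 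Then choose an arc $E_j\subset\partial W$ joining the two endpoints of $\gamma_j$ with $\diam E_j\to 0$ (uniform local connectedness), so that $J_j:=\gamma_j\cup E_j$ is a connected compact set with $\diam J_j\to 0$; since $w\notin J_j$ for large $j$ and $\xi\notin J_j$ for large $j$ (both are eventually far from the shrinking set $J_j$), and $J_j$ separates $N_{\gamma_j}$ from the rest, one gets that $\xi$ and $w$ lie in the \emph{same} complementary component of $J_j$ for large $j$ — but $\xi\in\overline{N_{\gamma_j}}$ forces $\xi$ into the closure of the component on the far side of $\gamma_j$, and combined with $E_j$ being tiny this places $\xi$ within distance $\diam J_j\to0$ of $\gamma_j$, hence within $o(1)$ of $\zeta$, contradicting $|\xi-\zeta|>0$. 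Filling in the separation bookkeeping (which complementary component of $J_j$ contains $w$, and that $\overline{N_{\gamma_j}}\setminus\overline\gamma_j$ lies in the other) is the delicate part, and for that I would lean on the standard crosscut topology already recalled in this section, or simply cite \cite{Mil,Pom75} for this classical fact as the statement itself suggests ("It is easy to see (cf.\ \cite{Mil})").
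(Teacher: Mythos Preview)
Your approach is essentially the same as the paper's: invoke uniform local connectedness of the compact set $\partial W$ to obtain, for each small crosscut $\gamma_j$, a connected set $E_j\subset\partial W$ of small diameter joining its endpoints, and then argue that $\gamma_j\cup E_j$ separates $N_{\gamma_j}$ from $w$, forcing $\diam N_{\gamma_j}\to 0$. The one place where the paper is sharper is precisely the step you flag as delicate: to show that $Y\cup\gamma_j$ (your $J_j$) separates $N_{\gamma_j}$ from $W\setminus\overline{N_{\gamma_j}}$, the paper argues by contradiction that otherwise one could join a point of $N_{\gamma_j}$ to a point of $W\setminus\overline{N_{\gamma_j}}$ by an arc $c_1$ avoiding $Y\cup\gamma_j$, then close it up with an arc $c_2\subset W$ crossing $\gamma_j$ exactly once, producing a Jordan curve $c_1\cup c_2$ that separates the two endpoints of $\gamma_j$ and hence disconnects the connected set $Y$ --- a contradiction. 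Once separation is established, $N_{\gamma_j}$ lies in a bounded complementary component of the small connected set $Y\cup\gamma_j$ and therefore has diameter at most $\diam(Y\cup\gamma_j)<\eps+\delta$. Plugging this clean Jordan-curve argument into your outline would make it complete without any appeal to the references.
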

\begin{proof}
Assume first that $\partial W$ is locally connected.
By compactness of $\partial W$, for every $\eps>0$, we can select $\delta>0$ so that any two points of distance $<\delta$ are
contained in a connected subset of $\partial W$ of diameter $<\eps$.

Let $(N_{\gamma_j})_{j=1}^\infty$ be a fundamental chain. For $\eps$ and $\delta$ as above, choose $j$ large enough so that
the crosscut $\gamma_j$ has diameter less than $\delta$. It follows that there is a compact connected subset $Y\subset \partial W$
which contains the endpoints of $\gamma_j$. It is easy to see that the set $Y\cup\gamma_j$ separates neighborhood $N_{\gamma_j}$ from
$W\setminus \ov{N_{\gamma_j}}.$ Indeed, otherwise we could select a simple smooth curve $c_1$ which is disjoint from  $Y\cup\gamma_j$
and joins some point $x\in N_{\gamma_j}$ to $y\in W\setminus \ov{N_{\gamma_j}}.$ Adjoining a suitably chosen smooth arc $c_2\subset W$
 from $x$ to $y$ which
cuts once across $\gamma_j$, we obtain a Jordan curve $c_1\cup c_2$ which separates the two endpoints of $\gamma_j$. Hence, it separates
the set $Y$ which is impossible, since it was assumed to be connected.

The diameter of $Y\cup\gamma_j$ is less than $\eps+\delta$. If $\eps+\delta$ is small enough, it follows that $N_j$ must have diameter
less than $\eps+\delta$. Since $\eps$ and $\delta$ can be arbitrarily small, it follows that $\cap \ov N_j$ is a single point.

\end{proof}

\noindent
As an example, consider the simply-connected domain $ W$ around the origin, whose boundary is obtained by adjoining to
the unit circle $S^1$ the radial segments $S_\theta=\{re^{2\pi i\theta}|\;0.5\leq r\leq 1\}$ for $\theta=\frac{1}{n},$ $n\in\ZZ$ and $\theta=0$.
It is easy to see that $\partial W$ is not locally connected at every point of the segment
$S_0$, which is the accumulation of the ``double comb'' (see Figure \ref{fig-example}). There is a single prime end whose impression is all of $S_0$,
and the impressions of all other prime ends are trivial.

\begin{figure}
\centerline{\includegraphics[width=0.3\textwidth]{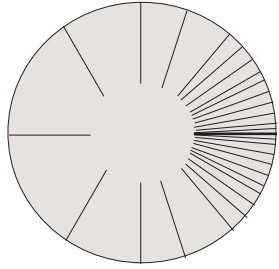}}
\caption{\label{fig-example}An example of a domain whose boundary has a non-trivial prime end. Note that if an impression of a fundamental chain
contains the point $1\in\partial W$, then it contains the whole segment $S_0$.
}
\end{figure}

We define the {\it Carath{\'e}odory compactification} $\widehat  W$ to be the disjoint union of $ W$ and  the set of prime ends
of $W$ with the following topology. For any crosscut neighborhood $N$ let $\widetilde N\subset\widehat  W$ be the neighborhood
$N$ itself, and the collection of all prime ends which can be represented by fundamental chains starting with $N$. These
neighborhoods, together with the open subsets of $ W$, form the basis for the topology of $\widehat W$.

\begin{carat}
Every conformal isomorphism $\phi: W\to\DD$ extends uniquely to a homeomorphism
$$\hat \phi:\widehat  W\to\overline\DD.$$
\end{carat}
Carath{\'e}odory Theorem for locally connected domains is a synthesis of the above statement and \propref{trivial-impression}.

Let us note:
\begin{lem}
\label{cont-lc}
If $f$ is a continuous map from a compact locally connected space $X$ onto a Hausdorff space $Y$, then $Y$ is also
locally connected.
\end{lem}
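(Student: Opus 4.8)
The plan is to reduce the statement to a short point-set argument using two standard facts: (a) a space is locally connected if and only if every connected component of every open subset is open (this is exactly the equivalence mentioned in \secref{sec:carat}, applied to $Y$ and derived for $X$ from local connectedness); and (b) a continuous map from a compact space to a Hausdorff space is closed, so here $f$ is a closed map.

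First I would fix a point $y\in Y$ and an open set $V\subset Y$ with $y\in V$, and set $U:=f^{-1}(V)$, an open subset of $X$. Since $X$ is locally connected, $U$ is the disjoint union of its connected components, each of which is open. Let $W$ be the union of exactly those components of $U$ that meet the fiber $f^{-1}(y)$. Then $W$ is open and, by construction, $f^{-1}(y)\subset W$. Each component making up $W$ has connected image meeting (hence containing) $y$, so $f(W)$ is a union of connected sets through the common point $y$, therefore connected; moreover $f(W)\subset f(U)\subset V$.

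It remains to show $f(W)$ is a neighborhood of $y$. Here I would use that $f$ is closed: $f(X\setminus W)$ is a closed subset of $Y$, and it does not contain $y$, since $f^{-1}(y)\subset W$ forces every point of $X\setminus W$ to have image different from $y$. Hence $Y\setminus f(X\setminus W)$ is an open set containing $y$; and by surjectivity of $f$, any point of $Y\setminus f(X\setminus W)$ is the image of some point necessarily lying in $W$, so $Y\setminus f(X\setminus W)\subset f(W)$. Thus $f(W)$ is a connected subset of $V$ containing $y$ in its interior, which is precisely what local connectedness of $Y$ at $y$ demands; since $y$ and $V$ were arbitrary, $Y$ is locally connected.

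The one place that requires a small idea — the main obstacle, such as it is — is the last step: one cannot simply claim $f(W)$ is open, because $f$ need not be an open map. The trick is to pass to complements and exploit that $W$ is ``saturated enough'' to contain the entire fiber $f^{-1}(y)$, so that closedness of $f$ yields an open neighborhood of $y$ sitting inside $f(W)$. Everything else is routine and computation-free.
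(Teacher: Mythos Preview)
Your proof is correct and follows essentially the same argument as the paper's: both take the union $W$ of those connected pieces of $f^{-1}(V)$ meeting the fiber $f^{-1}(y)$, observe that $f(W)$ is connected and contained in $V$, and then show it is a neighborhood of $y$ via the inclusion $Y\setminus f(X\setminus W)\subset f(W)$. The only cosmetic differences are that the paper phrases $W$ as the union of all connected subsets of $f^{-1}(V)$ meeting $f^{-1}(y)$ (which coincides with your union of components) and leaves implicit the closedness of $f$ that you spell out.
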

\begin{proof}
We reproduce the proof from \cite{Mil}, p. 184.
The image $f(X)=Y$ is compact. For any point $y\in Y$ and any open neighborhood $N=N(y)\subset Y$ the  set
$f^{-1}(N)$ is an open neighborhood of the compact set $f^{-1}(y)\subset X$. Consider the family $V_\alpha$ of all connected
subsets of $f^{-1}(N)$, which intersect $f^{-1}(y)$. Then the union $\cup f(V_\alpha)$ is a connected subset of $N$. It is
also a neighborhood of $y$, since it contains the open neighborhood $Y\setminus f(X\setminus \cup V_\alpha)$ of $y$.
\end{proof}
So, in particular, a Jordan curve is a locally connected set.
Using Carath{\'e}odory Theorem and \lemref{cont-lc}, we see that the converse to \propref{trivial-impression} also holds:
\begin{prop}
\label{trivial-impression2}
If the impression of every prime end of $W$ is trivial then $\partial W$ is locally connected.
\end{prop}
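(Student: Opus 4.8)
The plan is to prove the contrapositive: assuming $\partial W$ is \emph{not} locally connected, I will produce a prime end with non-trivial impression. The key tool is the Carath\'eodory Theorem together with \lemref{cont-lc}. Consider the conformal isomorphism $\phi:W\to\DD$ and its Carath\'eodory extension $\hat\phi:\widehat W\to\overline\DD$, which is a homeomorphism by the Carath\'eodory Theorem. Since $\overline\DD$ is compact and locally connected, so is $\widehat W$, being homeomorphic to it.

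Now I want to pass from the local connectedness of $\widehat W$ to that of $\partial W$, using that $\partial W$ is a continuous image of a suitable compact locally connected subspace of $\widehat W$. The natural candidate for the map is the \emph{impression map} $p\mapsto\cI(p)$ from the space of prime ends to $\partial W$; however this is set-valued, so I must be slightly careful. The clean route: define $\pi:\widehat W\setminus W\to\partial W$ by choosing, for each prime end $p$, the (unique) point of its impression --- this works precisely under our hypothesis that every impression $\cI(p)$ is trivial. So $\pi$ is a well-defined single-valued map on the prime-end boundary $\partial\widehat W:=\widehat W\setminus W$, which is a compact subset of $\widehat W$, hence compact and locally connected. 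I claim $\pi$ is continuous and surjective onto $\partial W$. Surjectivity follows because every boundary point lies in the impression of some prime end (take a sequence in $W$ converging to it, extract a convergent subsequence in $\widehat W$; by triviality of impressions the limit prime end's impression is exactly that point). Continuity is checked directly from the definition of the topology on $\widehat W$: if prime ends $p_n\to p$, then for any crosscut neighborhood $\widetilde N$ containing $p$, eventually $p_n\in\widetilde N$, so $\pi(p_n)\in\overline{N}\cap\partial W$, and since the defining crosscuts can be taken of diameter $\to 0$ with impression $\{\pi(p)\}$, the points $\pi(p_n)$ converge to $\pi(p)$. Then \lemref{cont-lc} applied to $\pi:\partial\widehat W\to\partial W$ gives that $\partial W$ is locally connected, contradicting our assumption --- and so in fact proving the statement directly rather than by contraposition.

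An alternative, possibly cleaner, packaging: note that $\overline\DD=\widehat W$ under $\hat\phi$, and that $\partial W$ is the image of the unit circle $S^1$ under the composite $S^1\xrightarrow{\hat\phi^{-1}}\partial\widehat W\xrightarrow{\pi}\partial W$, which is continuous by the above; since $S^1$ is compact and locally connected, \lemref{cont-lc} finishes it. Either way the logical skeleton is: $\overline\DD$ locally connected $\Rightarrow$ $\widehat W$ locally connected $\Rightarrow$ (restrict to the prime-end boundary, a closed subset) $\Rightarrow$ $\partial W$ locally connected as a continuous image via the now-well-defined impression map.

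The main obstacle I anticipate is verifying that the impression map $\pi$ is genuinely continuous with respect to the prime-end topology --- the topology on $\widehat W$ is defined via crosscut neighborhoods $\widetilde N$, and one must translate ``$p_n\to p$ in $\widehat W$'' into metric closeness of $\pi(p_n)$ to $\pi(p)$ in $\CC$. This requires using that a fundamental chain representing $p$ has crosscuts of diameter tending to $0$, so that $\overline{N_{\gamma_j}}$ shrinks to $\cI(p)=\{\pi(p)\}$; combined with the hypothesis that \emph{all} impressions are trivial, one gets uniform control. A secondary point needing care is that $\partial\widehat W$ (or $S^1$) is exactly the part of $\widehat W$ mapping onto $\partial W$, and that it is closed --- hence compact --- in $\widehat W$; this is immediate from $\hat\phi$ being a homeomorphism carrying it to $S^1=\partial\overline\DD$. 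Once these two points are nailed down, \lemref{cont-lc} does all the remaining work.
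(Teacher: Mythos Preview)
Your proposal is correct and follows exactly the route the paper indicates: the paper's entire proof is the sentence ``Using Carath\'eodory Theorem and \lemref{cont-lc}, we see that the converse to \propref{trivial-impression} also holds,'' and your argument is a faithful unpacking of this --- exhibit $\partial W$ as the continuous image of $S^1$ under the (now single-valued) impression map composed with $\hat\phi^{-1}$, then invoke \lemref{cont-lc}. One small wording issue: you write that $\partial\widehat W$ is ``compact and locally connected'' because it is a closed subset of the locally connected space $\widehat W$, but closed subsets of locally connected spaces need not be locally connected; you immediately repair this by noting $\hat\phi$ carries $\partial\widehat W$ homeomorphically to $S^1$, so just lead with that.
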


We also note:
\begin{thm}In the case when $ W$ is Jordan, the identity map $ W\to W$ extends to a
homeomorphism between the Carath{\'e}odory closure $\widehat W$ and $\overline W$.
\end{thm}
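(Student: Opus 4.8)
The plan is to write the extension down explicitly and then verify it is a homeomorphism by a compactness argument. Since $\partial W$ is a Jordan curve it is locally connected (apply \lemref{cont-lc} to a homeomorphic parametrization $S^1\to\partial W$), so by \propref{trivial-impression} the impression $\cI(p)$ of every prime end $p$ of $W$ is a single point $\{q_p\}$. Define $\pi\colon\widehat W\to\overline W$ by $\pi|_W=\operatorname{id}_W$ and $\pi(p)=q_p$ on prime ends; by construction $\pi$ extends the identity of $W$, and it is injective on $W$. I will show $\pi$ is a continuous bijection. Since $\widehat W$ is compact — by the Carath{\'e}odory Theorem it is homeomorphic to $\overline\DD$ — and $\overline W$ is compact Hausdorff (we take $W$ bounded), it will then follow automatically that $\pi$ is a homeomorphism. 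If one is willing to quote the strong Carath{\'e}odory theorem for Jordan domains one could instead simply compose the homeomorphism $\widehat W\to\overline\DD$ with the boundary extension $\overline\DD\to\overline W$, but I prefer to keep the argument self-contained.

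Continuity and surjectivity are the easy part. As $\pi$ is continuous on the open set $W$, it suffices to check continuity at a prime end $p$, where a neighbourhood basis is given by the sets $\widetilde{N_\gamma}$. If $(N_{\gamma_j})_j$ is a fundamental chain representing $p$ then $\pi(\widetilde{N_{\gamma_j}})\subseteq\overline{N_{\gamma_j}}$, since every prime end representable by a chain beginning with $N_{\gamma_j}$ has its impression contained in $\overline{N_{\gamma_j}}$. Now $(\overline{N_{\gamma_j}})_j$ is a decreasing sequence of compacta with $\bigcap_j\overline{N_{\gamma_j}}=\{q_p\}$, so $\diam\overline{N_{\gamma_j}}\to0$ (a nested sequence of compacta with diameters bounded away from $0$ would, after passing to convergent subsequences of realizing pairs of points, have an intersection with at least two distinct points). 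Hence $\overline{N_{\gamma_j}}\subseteq B(q_p,\eps)$ for $j$ large, which gives continuity at $p$. For surjectivity: $\pi(\widehat W)$ is compact, contains $W$, hence equals $\overline W$.

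The substance is injectivity, and this is precisely where the Jordan hypothesis enters. Since $\pi$ is injective on $W$ and carries prime ends into $\partial W$, injectivity amounts to the statement that over each point $q\in\partial W$ there is a unique prime end. Here I would use the Schoenflies theorem: there is a homeomorphism of the sphere taking $\partial W$ to a round circle and $W$ to one of the complementary discs, so after one more M\"obius change of coordinates we may assume that in a neighbourhood of $q$ the pair $(W,\partial W)$ is identified with $(\HH,\RR)$ and $q$ with $0$. Let $p$ be a prime end with $\cI(p)=\{q\}$, represented by a chain $(N_{\gamma_j})_j$. Since $\diam\overline{N_{\gamma_j}}\to0$ and $q\in\overline{N_{\gamma_j}}$, for $j$ large $\overline{N_{\gamma_j}}$ lies inside the local chart; moreover $q\notin\overline{\gamma_j}$ for $j$ large, because $q\notin W\supseteq\gamma_j$ and $q$ could be an endpoint of at most one crosscut of the chain, the closures $\overline{\gamma_j}$ being pairwise disjoint. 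Hence $\delta_j:=\dist(\overline{\gamma_j},q)>0$, so in the half-plane model the disc of radius $\delta_j$ about $q$ misses $\gamma_j$; being connected, its upper half lies in a single component of $W\setminus\gamma_j$, and since it contains points just above the segment of $\RR$ cut off by the endpoints of $\gamma_j$ it must be the component not containing the base point, i.e.\ $N_{\gamma_j}$. Thus each $N_{\gamma_j}$ contains a standard half-disc neighbourhood of $q$, while conversely each $N_{\gamma_j}$, having diameter tending to $0$, is contained in such a half-disc for $j$ large. Therefore $(N_{\gamma_j})_j$ is equivalent to the canonical fundamental chain of half-discs about $q$; as this conclusion is independent of $p$, there is only one prime end with impression $\{q\}$.

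I expect the real work to be concentrated in this last paragraph — converting the Schoenflies picture into the precise nesting of $N_{\gamma_j}$ between two standard half-disc neighbourhoods of $q$, keeping careful track of which component of $W\setminus\gamma_j$ is $N_{\gamma_j}$, and disposing of the harmless case in which $q$ is an endpoint of one crosscut. Everything else — the reduction to $\pi$, triviality of impressions via \propref{trivial-impression}, the shrinking-diameter estimate for continuity, surjectivity, and the compact-to-Hausdorff upgrade — is routine.
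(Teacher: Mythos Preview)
Your argument is correct and shares the paper's overall architecture: define the extension $\pi$ by sending each prime end to its (singleton) impression, check continuity and surjectivity, prove injectivity on prime ends, and finish with the compact-to-Hausdorff upgrade. The genuine divergence is in the injectivity step. You invoke the Schoenflies theorem to flatten a neighbourhood of $q\in\partial W$ to $(\HH,\RR)$ and then show directly that any fundamental chain with impression $\{q\}$ is equivalent to the standard half-disc chain there. The paper instead argues by contradiction without Schoenflies: if $x$ lies in the impressions of two distinct prime ends $p_1,p_2$, it uses the Carath\'eodory Theorem (the homeomorphism $\widehat W\cong\overline\DD$) to produce paths in disjoint crosscut neighbourhoods landing at $x$, joins them inside $W$ to form a Jordan curve $\Gamma$ meeting $\partial W$ only at $x$, and observes that $\Gamma$ would separate $\partial W\setminus\{x\}$ --- impossible for a Jordan curve. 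Your approach is more hands-on and avoids appealing to the Carath\'eodory Theorem for the injectivity step, at the cost of importing Schoenflies; the paper's approach stays within the tools already set up in the section and is a bit slicker, but leans on $\widehat W\cong\overline\DD$ to manufacture the approaching paths. Both are sound; your half-disc sandwich $H_{\delta_j}\subset N_{\gamma_j}\subset H_{\diam\overline{N_{\gamma_j}}}$ is exactly the right way to execute the Schoenflies reduction.
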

\begin{proof}
Suppose that there is a point $x\in\partial W$ which is in the impression of two different prime ends $p_1$, $p_2$.
Let $N_1$, $N_2$ be two
disjoint crosscut neighborhoods of $p_1$, $p_2$ respectively in $\hat W$. By Carath{\'e}odory Theorem, there exist
continuous paths $$\gamma_i:[0,1)\to\overset{\circ}{N}_i\text{ such that }\lim_{t\to 1-}\gamma_i(t)=x.$$
Let $\gamma$ be a simple curve in $W$ which is disjoint from $\gamma_i((0,1))$ and connects $\gamma_1(0)$ to $\gamma_2(0)$.
Then $$\Gamma\equiv \gamma_1([0,1))\cup\gamma_2([0,1))\cup \gamma\cup\{ x\}$$
is a Jordan curve which is easily seen to disconnect $\partial W$. Since $$\Gamma\cap \partial W=\{ x\},$$
removing a single point $x$ disconnects $\partial W$. Hence $\partial W$ is not a Jordan curve, which contradicts our assumptions.

We define a map $\iota:\widehat W\to\ov W$ by
\begin{itemize}
\item $\iota(x)=x$ for $x\in W$;
\item $\iota(p)=\cI(p)$ for a prime end $p$.
\end{itemize}
Continuity and surjectivity of $\iota$ are evident from its definition. Its injectivity was shown above. 
Since a continuous bijection of a compact set to a Hausdorff topological space is a homeomorphism onto the image, the proof is
completed.
\end{proof}

In the Jordan case, we will use the notation $\overline\phi$ for the extension of a conformal map to the closure of $ W$.
Of course,
$$\overline\phi=(\overline f)^{-1}.$$

Carath{\'e}odory compactification of $( W,w)$
can be seen as its metric completion for the following metric. Let $z_1$, $z_2$ be two points in
$ W$ distinct from $w$.
We will define the {\it crosscut distance} $\dist^ W_C(z_1,z_2)$ between $z_1$ and $z_2$ as the infimum of the diameters of
curves $\gamma$ in $ W$ with the following properties:
\begin{itemize}
\item $\gamma$ is either a crosscut or a simple closed curve;
\item $\gamma$ separates $z_1$, $z_2$ from $w$.
\end{itemize}

\noindent
It is easy to verify that:
\begin{thm}
The crosscut distance is a metric on $ W\setminus\{w\}$ which is locally equal to the Euclidean one.
The completion of $ W$ equipped with $\dist_C$ is homeomorphic to $\widehat W$.
\end{thm}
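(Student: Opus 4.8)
The statement bundles three assertions: that $\dist_C$ is a metric on $W\setminus\{w\}$, that it agrees locally with the Euclidean metric, and that the metric completion of $(W\setminus\{w\},\dist_C)$ is homeomorphic to $\widehat W$. The plan is to establish the local comparison first, then the metric axioms, then the completion, the latter two resting on the first. As a preliminary remark: $\dist_C$ is indeed only defined, and only a metric, on $W\setminus\{w\}$ (a curve separating $z_1,z_2$ from $w$ cannot exist if $z_1$ or $z_2$ equals $w$); ``the completion of $W$'' means the completion of $(W\setminus\{w\},\dist_C)$, in which $w$ reappears as the single limit of any Euclidean sequence $z_n\to w$ (such a sequence is $\dist_C$-Cauchy by the local comparison).

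For the local comparison, fix $z_0\in W\setminus\{w\}$, set $3\delta=\dist(z_0,\{w\}\cup\partial W)$, and claim $\dist_C(z_1,z_2)=|z_1-z_2|$ for $z_1,z_2\in B(z_0,\delta)$ provided $\delta$ is small enough. For ``$\le$'': for small $\eta$, the boundary of the $\eta$-neighbourhood of the segment $[z_1,z_2]$ is a simple closed curve in $B(z_0,2\delta)\subset W\setminus\{w\}$ enclosing $z_1,z_2$ with $w$ outside (if $w$ lay on the segment, slit the racetrack radially towards it), of diameter $|z_1-z_2|+2\eta$; letting $\eta\to0$ gives $\dist_C(z_1,z_2)\le|z_1-z_2|$. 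For ``$\ge$'': let $\gamma$ be a crosscut or simple closed curve with $z_1,z_2$ in its crosscut neighbourhood (resp.\ Jordan interior) $N_\gamma$ and $w\notin N_\gamma$. If $\gamma$ is a simple closed curve then $\diam\gamma=\diam N_\gamma\ge|z_1-z_2|$. If $\gamma$ is a crosscut of diameter $<2\delta$, then, having an endpoint on $\partial W$ at distance $>3\delta$ from $z_0$, it is disjoint from $\overline{B(z_0,\delta)}$, so the connected set $\overline{B(z_0,\delta)}$ lies in the single component $N_\gamma$ of $W\setminus\gamma$; thus $z_0\in N_\gamma$ and $\diam\gamma\ge\rho_0:=\inf\{\diam\gamma':\gamma'\text{ a crosscut},\,z_0\in N_{\gamma'},\,w\notin N_{\gamma'}\}$. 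Here $\rho_0>0$: a sequence of such crosscuts of diameters tending to $0$ would (after passing to a subsequence) shrink onto a point $\xi_0\in\partial W$, and since $\widehat\CC\setminus W$ is connected so is $W\setminus\overline{B(\xi_0,\varepsilon)}$, a connected set containing $z_0$ and $w$ yet eventually disjoint from $\gamma'$ — contradicting the separation. Taking $\delta<\rho_0/2$ gives $\diam\gamma\ge\min(|z_1-z_2|,2\delta,\rho_0)=|z_1-z_2|$ in every case. Hence $\dist_C$ coincides with the Euclidean metric on a neighbourhood of each point of $W\setminus\{w\}$, and in particular induces the Euclidean topology there.

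Symmetry and non-negativity of $\dist_C$ are immediate; positivity follows from the bound $\diam\gamma\ge\min(|z_1-z_2|,2\delta,\rho_0)>0$ just obtained (which needs $z_1\ne z_2$ but not $z_2\in B(z_0,\delta)$). The substantive axiom is the triangle inequality. Given crosscut neighbourhoods (or Jordan interiors) $N_{\gamma_1}\ni z_1,z_2$ and $N_{\gamma_2}\ni z_2,z_3$ with $w$ in neither, I would set $U=N_{\gamma_1}\cup N_{\gamma_2}$ — connected (the pieces share $z_2$) with $w\notin\overline U$ — let $V$ be the component of $W\setminus\overline U$ containing $w$, and extract from the frontier $\partial_W V\subseteq\gamma_1\cup\gamma_2$ a single crosscut or simple closed curve $\gamma_3$ still separating $w$ from $z_1$ and $z_3$. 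Since $\partial_W V\subseteq\gamma_1\cup\gamma_2$ — and, when neither $N_{\gamma_i}$ contains the other, the crosscuts $\gamma_1,\gamma_2$ must cross, so the relevant part of $\partial_W V$ is connected — one gets $\diam\gamma_3\le\diam\gamma_1+\diam\gamma_2$, hence $\dist_C(z_1,z_3)\le\diam\gamma_1+\diam\gamma_2$ and, taking infima, the triangle inequality. The one mildly fiddly point is turning the a priori branchy set $\partial_W V$ into an honest crosscut or simple closed curve without enlarging its diameter in the limit — routine planar topology.

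For the completion I would build a homeomorphism $\Psi:\widehat W\to\overline{W}^{\,C}$ extending $\mathrm{id}_W$, where $\overline{W}^{\,C}$ is the completion. For a prime end $p$ with fundamental chain $(N_{\gamma_j})$, any choice $z_j\in N_{\gamma_j}$ is $\dist_C$-Cauchy — $z_i,z_j\in N_{\gamma_J}$ for $i,j\ge J$ forces $\dist_C(z_i,z_j)\le\diam\gamma_J\to0$ — with limit $\Psi(p)$ independent of the choices and of the chain inside its equivalence class (interleave equivalent chains). The same estimate shows $\Psi$ maps the basic neighbourhood $\widetilde{N_{\gamma_j}}$ into the $\dist_C$-ball of radius $\diam\gamma_j$ about $\Psi(p)$, so $\Psi$ is continuous; since $\widehat W$ is compact — being homeomorphic to $\overline\DD$ by the Carath{\'e}odory Theorem — and $W$ is dense in both spaces, $\Psi$ is automatically surjective, which also shows every $\dist_C$-Cauchy sequence leaving $W$ converges to a prime end. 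The main obstacle is injectivity: distinct prime ends $p\ne q$ give $\dist_C$-separated limits. The key point is that if curves $\sigma_j$ of diameter tending to $0$ had points $z_j\in N_{\gamma_j}$, $z_j'\in N_{\tau_j}$ (fundamental chains of $p$, $q$) both in $N_{\sigma_j}$ with $w\notin N_{\sigma_j}$, then — arguing with $\widehat\CC\setminus W$ connected as for $\rho_0$ — the neighbourhoods $N_{\sigma_j}$ shrink onto a single point $\xi_0\in\cI(p)\cap\cI(q)$, and a local analysis of $W$ near $\xi_0$ (or a transfer through $\phi$ to the disk, where prime ends are points of $S^1$) contradicts the eventual disjointness of the two chains: their tails cannot both be reached inside a shrinking connected piece of $W$. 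Once injective, $\Psi$ is a continuous bijection from a compact space onto a Hausdorff one, hence a homeomorphism; its restriction to $W$ is the identity, carrying the Euclidean to the $\dist_C$-topology by the local comparison, so $\overline{W}^{\,C}\cong\widehat W$. This identification also promotes $\dist_C$ from a pseudometric to a genuine metric, closing the remaining gap in the axioms.
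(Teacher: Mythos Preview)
The paper does not actually prove this theorem: it is introduced by the phrase ``It is easy to verify that'' and no argument is given. So there is nothing to compare your approach against; what matters is whether your proof stands on its own.

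Your overall structure is sound, but there is one recurring error. In the argument that $\rho_0>0$ (and again in the injectivity step for $\Psi$) you assert that ``since $\widehat\CC\setminus W$ is connected so is $W\setminus\overline{B(\xi_0,\varepsilon)}$''. This implication is false: connectedness of the complement tells you that each component of $W\setminus\overline{B(\xi_0,\varepsilon)}$ is simply connected, not that there is only one. A comb domain (like the one in Figure~\ref{fig-example}) gives an immediate counterexample --- removing a small disk around a point on a tooth can disconnect $W$. The conclusion $\rho_0>0$ is nonetheless correct, and the fix is simpler than your argument: choose any arc $\alpha\subset W$ from $z_0$ to $w$; it is compact, hence at positive distance $d$ from $\partial W$. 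Any crosscut separating $z_0$ from $w$ must meet $\alpha$, so it contains a point at distance $\ge d$ from $\partial W$ while also having an endpoint on $\partial W$; thus its diameter is at least $d$. The same path argument repairs the injectivity of $\Psi$: if $p\ne q$ are distinct prime ends, take disjoint crosscut neighbourhoods $N_{\gamma}\ni$ (tail of $p$-chain) and $N_{\tau}\ni$ (tail of $q$-chain), and an arc $\alpha\subset W$ joining a point of $\gamma$ to a point of $\tau$ without entering $N_\gamma\cup N_\tau$; any separating curve $\sigma$ with both a point in $N_\gamma$ and a point in $N_\tau$ on the $N_\sigma$ side and $w\notin N_\sigma$ must cross $\alpha$, bounding $\diam\sigma$ below.

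The remaining soft spots --- extracting a single crosscut or Jordan curve from $\partial_W V$ in the triangle-inequality step, and the ``local analysis near $\xi_0$'' --- you already flag as routine or as replaceable by transport through $\phi$ to $\DD$. Those are acceptable in a sketch, but the connectedness claim above is a genuine false step and should be replaced.
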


\subsection{Computational representation of prime ends}

\begin{defn}
We say that a curve $g:(0,1)\to\CC$ is a {\it rational polygonal curve} if:
\begin{itemize}
\item the image of $g$ is a simple curve;
\item $g$ is piecewise-linear with rational coefficients.
\end{itemize}
\end{defn}

\noindent
The following is elementary:
\begin{prop}
\label{specify-end}
Let $N_{\gamma_j}$ be a fundamental chain in a pointed simply-connected domain $( W,w)$. Then there exists
an equivalent fundamental chain $N_{\tau_j}$ such that the following holds. For every $j$ there exists a rational
polygonal curve $t_j:(0,1)\to\CC$ with
$$t_j(0.5)\in N_{\gamma_j}\setminus \overline{N_{\gamma_{j+1}}},$$
and such that $\tau_j\subset t_j([0.1,0.99])$.
Furthermore, $t_j$ can be chosen so that $$\diam t_j(0,1)\underset{j\to\infty}\longrightarrow 0.$$
\end{prop}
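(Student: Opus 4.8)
The plan is to approximate not $\gamma_j$ but $\gamma_{j+1}$, by a rational polygonal crosscut $\tau_j$ that hugs $\gamma_{j+1}$ \emph{from the side of $w$} and hence separates $N_{\gamma_{j+1}}$ from $w$; this produces the sandwich $N_{\gamma_{j+1}}\subseteq N_{\tau_j}\subseteq N_{\gamma_j}$, from which every assertion of the proposition falls out. Fix $j$ and write $M:=W\setminus\overline{N_{\gamma_{j+1}}}$ for the component of $W\setminus\gamma_{j+1}$ containing $w$; then $M$ is a simply connected domain, $\gamma_{j+1}$ is its common relative boundary in $W$ with $N_{\gamma_{j+1}}$, and $\gamma_{j+1}\subseteq N_{\gamma_j}$ (since $\gamma_{j+1}\subseteq\overline{N_{\gamma_j}}\cap W=N_{\gamma_j}\cup\gamma_j$ and $\overline{\gamma_{j+1}}\cap\overline{\gamma_j}=\emptyset$). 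Let $K$ be the compact interior subarc of $\gamma_{j+1}$ obtained by deleting two tiny end-arcs, and $a,b\in\partial W$ the endpoints of $\overline{\gamma_{j+1}}$. Using that the closures $\overline{\gamma_k}$ are pairwise disjoint compacta (so $\dist(\overline{\gamma_k},\overline{\gamma_l})>0$ for $k\neq l$) and that $K\Subset W$, fix a tolerance $\eta_j>0$ that is $\le 2^{-j}$, is smaller than $\dist(\overline{\gamma_{j+1}},\overline{\gamma_j})$ and than $\dist(w,\overline{\gamma_{j+1}})$, is small enough that $U_{\eta_j}(K)\subseteq N_{\gamma_j}\cap W$, and (shrinking finitely often below) is small relative to the finitely many $\tau_{j'}$, $j'<j$, already built. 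Pick a rational point $z_j\in M\cap U_{\eta_j}(K)$; then automatically $z_j\in N_{\gamma_j}\setminus\overline{N_{\gamma_{j+1}}}$. Using that the endpoints of a crosscut are accessible from each adjacent component, choose a crosscut of $W$ through $z_j$ which (apart from $a,b$) lies in $M$, stays $\eta_j$-close to $\overline{\gamma_{j+1}}$, and runs along $\gamma_{j+1}$; approximate it by a generic simple rational polygonal curve, truncated short of the two endpoints and continued there by short polygonal pieces that remain in $M$ until they cross $\partial W$ into $\CC\setminus\overline W$, and reparametrize the result $t_j:(0,1)\to\CC$ so that $t_j(0.5)=z_j$ and everything occurs in $[0.1,0.99]$. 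Finally let $\tau_j$ be the $t_j$-image of the connected component of $t_j^{-1}(W)$ containing $0.5$; after a harmless generic perturbation this is a crosscut of $W$, contained in $M$ and in $t_j([0.1,0.99])$, containing $z_j$, with $\overline{\tau_j}\subseteq U_{O(\eta_j)}(\overline{\gamma_{j+1}})$. In particular $\diam t_j,\diam\tau_j\le\diam\overline{\gamma_{j+1}}+O(\eta_j)\to 0$, and the recursive choice of the $\eta_j$ makes the $\overline{\tau_j}$ pairwise disjoint.

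With $\tau_j$ in hand the verification is short. Since $\tau_j\subseteq M$ and its closure adds two points of $\partial W\subseteq\partial M$, the curve $\tau_j$ is a crosscut of the simply connected domain $M$, so it cuts $M$ into the thin ``sliver'' $\Sigma_j$ pinched between $\tau_j$ and $\gamma_{j+1}$ (which lies in $U_{O(\eta_j)}(\overline{\gamma_{j+1}})$ and avoids $w$) and a remaining piece containing $w$; because $\tau_j$ is a crosscut of $W$ — hence $W\setminus\tau_j$ has exactly two components — it is $\Sigma_j$, not the other piece, that is adjacent to all of $\gamma_{j+1}$, so $W\setminus\tau_j$ has the two components $M\setminus\overline{\Sigma_j}\ni w$ and $\Sigma_j\cup\gamma_{j+1}\cup N_{\gamma_{j+1}}$, whence $N_{\tau_j}=\Sigma_j\cup\gamma_{j+1}\cup N_{\gamma_{j+1}}\supseteq N_{\gamma_{j+1}}$. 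Moreover $\tau_j$ lies within $\eta_j<\dist(\overline{\gamma_j},\overline{\gamma_{j+1}})$ of $\overline{\gamma_{j+1}}$, hence is disjoint from $\gamma_j$, hence — being connected and meeting $N_{\gamma_j}$ near $K$ — is contained in $N_{\gamma_j}$; the same estimate puts $\Sigma_j\subseteq N_{\gamma_j}$, so $N_{\tau_j}\subseteq N_{\gamma_j}$. Thus $N_{\gamma_{j+1}}\subseteq N_{\tau_j}\subseteq N_{\gamma_j}$ for every $j$. Consequently $N_{\tau_{j+1}}\subseteq N_{\gamma_{j+1}}\subseteq N_{\tau_j}$, so the $N_{\tau_j}$ are nested; together with $\diam\tau_j\to 0$ and the disjointness of the $\overline{\tau_j}$, $(N_{\tau_j})$ is a fundamental chain; and since each $N_{\gamma_j}$ contains $N_{\tau_j}$ while each $N_{\tau_j}$ contains $N_{\gamma_{j+1}}$, the chains $(N_{\tau_j})$ and $(N_{\gamma_j})$ are equivalent. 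Combined with $t_j(0.5)=z_j\in N_{\gamma_j}\setminus\overline{N_{\gamma_{j+1}}}$ and $\tau_j\subseteq t_j([0.1,0.99])$, this is the proposition.

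The one genuinely delicate step is driving $t_j$ out of $W$ near the endpoints $a,b$ of $\gamma_{j+1}$ while keeping its $W$-part inside $M$, i.e. preventing $\tau_j$ from ever dipping into $N_{\gamma_{j+1}}$ (which would destroy the inclusion $N_{\gamma_{j+1}}\subseteq N_{\tau_j}$). The mechanism is that $a,b$ are accessible from $M$ along a rail hugging $\gamma_{j+1}$, and that a curve staying $O(\eta_j)$-close to $\overline{\gamma_{j+1}}$ — hence at positive distance from $\overline{\gamma_j}$ — can leave $M$ only by crossing $\gamma_{j+1}$ itself, so it suffices to choose the approximating rail and the exit points on $\partial W$ so that this crossing never happens; in the model where $\partial W$ is a smooth curve near $a$ one simply takes $\tau_j$ to be $\gamma_{j+1}$ pushed a distance $\eta_j$ into $M$, which automatically reaches $\partial W$ near $a$ and $b$. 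Everything else — genericity and simplicity of the polygonal curves, the recursive bookkeeping of the $\eta_j$, the identification of the two components of $W\setminus\tau_j$, simple connectivity of crosscut neighbourhoods and the existence of one-sided collars along crosscuts, and the relation $\overline{N_{\gamma_{j+1}}}\cap W\subseteq N_{\gamma_j}$ — is routine.
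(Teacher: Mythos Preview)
The paper gives no proof of this proposition; it simply declares it ``elementary'' and proceeds. So there is nothing to compare your argument against on the paper's side. Your approach --- approximate $\gamma_{j+1}$ from the $w$-side by a rational polygonal crosscut $\tau_j$ so as to force the sandwich $N_{\gamma_{j+1}}\subseteq N_{\tau_j}\subseteq N_{\gamma_j}$ --- is the natural one and is essentially correct; the deduction of equivalence, nesting, $\diam\to 0$, and the location of $t_j(0.5)$ from that sandwich is clean.

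Two places deserve a firmer hand than you give them. First, the step you yourself flag: getting $t_j$ to exit $W$ near $a$ and $b$ while its $W$-portion stays inside $M$. Your justification ``in the model where $\partial W$ is a smooth curve near $a$ one simply pushes $\gamma_{j+1}$ a distance $\eta_j$ into $M$'' does not cover a general boundary. What is actually needed is that each endpoint of a crosscut of a simply-connected domain is accessible from \emph{both} complementary components; this is standard once one maps to $\DD$ (an accessible boundary point corresponds to a single point of $S^1$, so $\varphi(\gamma_{j+1})$ is a genuine crosscut of the disk, and accessibility of its endpoints from either side is obvious there and pulls back). Invoking this, your construction goes through without change.

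Second, your claim that ``it is $\Sigma_j$, not the other piece, that is adjacent to all of $\gamma_{j+1}$'' is asserted with the wrong justification (merely that $W\setminus\tau_j$ has two components). The real reason is built into your construction: if $\tau_j$ is obtained by pushing $\gamma_{j+1}$ into $M$ along access arcs to $a,b$, then $\overline{\tau_j}\cup\overline{\gamma_{j+1}}$ bounds a thin Jordan region inside $U_{O(\eta_j)}(\overline{\gamma_{j+1}})$, and that region is $\Sigma_j$; hence $\gamma_{j+1}\subset\partial\Sigma_j$. If instead $\tau_j$'s endpoints landed on $\partial W$ on the ``wrong side'' of $a$ or $b$, the sliver could fail to border $\gamma_{j+1}$ and the inclusion $N_{\gamma_{j+1}}\subseteq N_{\tau_j}$ would break. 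So this point is not automatic from proximity alone; it is a feature of the push-off construction and should be stated as such.

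With those two clarifications your proof is complete and is exactly the sort of argument the paper presumably had in mind.
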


\noindent
We call the sequence of polygonal curves $t_j$ as described in the above proposition  a \textit{representation}
of the prime end
$p$ specified by $N_{\gamma_j}$. Since only a finite amount of information suffices to describe each rational polygonal curve $t_j$,
the sequence $t_j$ can be specified by an oracle. Namely, there exists 
an algorithm $\cA$ such that for every representation $(t_j)$ of a prime end there exists 
a function $\phi:\NN\to\NN$ such that
given access to  the values of $\phi(i),$ $i=1,\ldots,n$, the algorithm $\cA$  outputs the coefficients of the rational polygonal curves
$t_j$, $j=1,\ldots,m_n$ with $m_n\underset{n\to\infty}{\longrightarrow}\infty$. We will refer to such $\phi$ simply as {\it an oracle for }$p$.

\subsection{Structure of a computable metric space on $\widehat W$.}
Let $K\Subset\CC$. We say that $\phi$ is {\it an oracle for }$K$ if $\phi$ is a function from the natural numbers to
sets of finite sequences of triples $(x_j,y_j,r_j)$ of rational numbers with the following property.
Let $$\phi(n)=\{(x_j,y_j,r_j)\}_{j=1}^{k_n},$$
and let $B_j$ be the ball of radius $r_j$ about the point $x_j+iy_j$. Then
$$\dist_H(\bigcup_{j=1}^{k_n} \ov{ B_j}, K)<2^{-n}.$$

Let $( W,w)$ be a simply-connected pointed domain. Then the following conditional computability result holds:

\begin{thm}
\label{cond-comp-metric}
The following is true in the presence of oracles for $w$ and for $\partial W$.
 The Carath{\'e}odory completion $\widehat W$
equipped with the crosscut distance is a computable metric space, whose ideal points are rational points in $ W$.
Moreover, this space is computably compact.
\end{thm}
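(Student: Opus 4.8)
The plan is to check the three axioms of a computable metric space for the pair $(\widehat W,\dist_C)$ with ideal set $\cS$ equal to the rational points of $W$ (those distinct from $w$, say), and then to verify computable compactness; throughout, all computations are carried out relative to the oracles for $w$ and $\partial W$. First I would record two preliminary facts that follow from the oracles. From the oracle for $\partial W$ one extracts compact sets $K_n$ with $\dist_H(K_n,\partial W)<2^{-n}$, so $\CC\setminus\partial W$ is lower-computable (a closed rational ball is certified to avoid $\partial W$ once it is found disjoint from some $2^{-n}$-neighborhood of $K_n$); consequently $W$, being the connected component of $w$, is lower-computable open, since a rational ball lies in $W$ iff it can be joined to a small ball about $w$ by a finite chain of overlapping rational balls disjoint from $\partial W$ — a semi-decidable condition. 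Also, $\partial W$ is computably compact by \propref{comp-set-2}. In particular the hypotheses of \thmref{comp-riem-map} hold (relative to the oracles), so the Riemann map $\varphi\colon W\to\DD$ and its inverse $f$ are computable; I will use this for the lower bounds below. The ideal set $\cS$ is then a uniformly computable sequence (enumerate $\QQ+i\QQ$, keep the points certified to lie in $W$), and it is $\dist_C$-dense in $\widehat W$ because $W$ is dense in its completion $\widehat W$ and $\dist_C$ coincides with the Euclidean metric near every interior point.

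For the uniform computability of $d:=\dist_C(z_1,z_2)$ with $z_1,z_2\in\cS$, the easy half is producing upper bounds. I would semi-decide $d<q$ by searching for rationals $q',\rho$ with $q'+2\rho<q$ and a rational polygonal simple arc or closed curve $\gamma$ with $\diam\gamma<q'$ such that: $\gamma\subset W$ (its image is a compact set certified disjoint from some $2^{-n}$-neighborhood of $K_n$ and joinable to $w$ inside $\CC\setminus\partial W$); in the arc case each endpoint of $\gamma$ lies within $\rho$ of $\partial W$ (certifiable from the $K_n$); and $\gamma$ separates $\{z_1,z_2\}$ from $w$ (a decidable planar-topology statement, querying the oracle for $w$ to sufficient precision). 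This search succeeds exactly when $d<q$: if $d<q$ then a genuine separator $\sigma$ of diameter $q''<q$ exists, and — perturbing a closed curve to a nearby rational polygon inside $W$, or in the crosscut case replacing the compact arc $\sigma([\delta,1-\delta])\subset W$ by a nearby rational polygon whose endpoints, for small $\delta$, lie within any prescribed $\rho$ of $\partial W$ since $\sigma(\delta)\to\sigma(0)\in\partial W$ — one obtains such a $\gamma$; conversely any such $\gamma$ can be prolonged at each endpoint by a segment of length $<\rho$ inside $W$ (to the first hit of $\partial W$ along the segment toward a nearest boundary point), giving a genuine separating crosscut of diameter $<q'+2\rho<q$. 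This yields a decreasing sequence of rational upper bounds for $d$.

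The lower bounds are the crux, and I expect this to be the main obstacle: one must certify $d>q$, i.e.\ that \emph{no} separating crosscut or loop of diameter $<q$ encloses $\{z_1,z_2\}$ away from $w$. My plan is to transport the problem to the disk using the computable map $\varphi$ and the conformal invariance of the modulus. First dispose of the degenerate case: using the computable numbers $\dist(z_i,\partial W)$ and $|z_1-z_2|$ one checks that if $\diam\gamma$ is below an explicit computable threshold then $\gamma$ must be a closed loop confined to a Euclidean ball around $z_1$ that also contains $z_2$, which forces $\diam\gamma\geq|z_1-z_2|/C$ — an a priori computable lower bound. Otherwise $\gamma$ encloses $\{z_1,z_2\}$ inside a ring domain $R\subset W$ whose modulus grows without bound as $\diam\gamma\to0$; since $\varphi$ is conformal, $\mod R=\mod\varphi(R)$, and $\varphi(R)$ is a ring in $\DD$ separating $\{\varphi(z_1),\varphi(z_2)\}$ from $0$, whose modulus is bounded above in terms of the explicitly computable quantity $\dist_C^{\,\DD}(\varphi(z_1),\varphi(z_2))$. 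Matching the two modulus estimates bounds $\diam\gamma$ below by an explicit increasing function of the obstruction, giving rational lower bounds converging up to $d$; the delicate point — where the real work lies — is extracting the ring domain $R$ with a modulus estimate that does not degenerate, i.e.\ controlling how $\partial W$ may intrude between $z_1,z_2$ and $\gamma$. Together with the upper bounds, this makes $\dist_C(s_i,s_j)$ computable uniformly in $(i,j)$, so $(\widehat W,\dist_C)$ is a computable metric space.

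Finally, for computable compactness one must semi-decide uniformly that a finite list of ideal $\dist_C$-balls covers $\widehat W$; since $\widehat W$ is compact and complete, it is enough to exhibit, for each $n$, a \emph{computable} finite $2^{-n}$-net of $(\widehat W,\dist_C)$ by ideal points, because then a candidate cover is accepted once every point of a sufficiently fine net lies in one of the cover balls with $2^{-n}$ room to spare. Such a net is built in two pieces: the ``deep'' part $\{z\in W:\dist(z,\partial W)\geq 2^{-n}\}$ is covered by finitely many rational Euclidean balls (which are $\dist_C$-balls there), while for the ``near-boundary'' part one uses that any point of $\widehat W$ at $\dist_C$-distance $>2^{-n}$ from the deep part is $\dist_C$-close to $f(\zeta)$ for some $\zeta$ with $|\zeta|$ close to $1$, so a fixed finite set of such $\zeta$'s suffices and their images $f(\zeta)$ are approximated by rational points of $W$; making this last step quantitative again uses the conformal picture and is of the same difficulty as the lower bound above. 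This produces computable $2^{-n}$-nets, hence $\widehat W$ is computably compact.
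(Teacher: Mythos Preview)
Your upper-bound half is fine and essentially matches the paper in spirit. The genuine gap is in the lower bound. Your modulus-transport argument, even if the ring $R$ could be defined cleanly (in the crosscut case $\gamma$ is an arc, and it is not clear which ring domain you mean), produces only a \emph{single} a priori lower bound on $d=\dist_C^W(z_1,z_2)$, namely an explicit increasing function of $|\varphi(z_1)-\varphi(z_2)|$ or of $\dist_C^{\DD}(\varphi(z_1),\varphi(z_2))$. That quantity depends only on the images $\varphi(z_i)$, and the crosscut distance is \emph{not} conformally invariant, so this bound is not sharp: in a long thin domain you can have $|\varphi(z_1)-\varphi(z_2)|$ of order~$1$ while $d$ is arbitrarily large. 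You never explain how to refine this into a sequence of rationals increasing to $d$; ``matching the two modulus estimates'' gives one inequality, not a convergent scheme. The sentence where you flag ``the real work'' is exactly the missing step, and I do not see how to fill it along these lines. The same issue reappears in your compactness argument, which you explicitly reduce to the lower-bound problem.

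The paper avoids all of this with a much more elementary device: it sandwiches $W$ between polygonal domains $L_n\subset W\subset U_n$ that are ideal points in $\text{Comp}(\CC)$ with $\dist_H(L_n,\ov W),\dist_H(U_n,\ov W)<2^{-n}$, computable from the oracle for $\partial W$. For a fixed polygonal domain the crosscut distance between two rational points is a finite combinatorial-geometric quantity and hence computable to any precision; so one computes $l_n\approx\dist_C^{U_n}(s_1,s_2)$ and $u_n\approx\dist_C^{L_n}(s_1,s_2)$. Elementary continuity of the crosscut distance in the domain (a crosscut for $W$ can be truncated/extended by $O(2^{-n})$ to become one for $L_n$ or $U_n$, and vice versa) gives $l_n,u_n\to\dist_C^W(s_1,s_2)$, and one outputs $l_n$ as soon as $|l_n-u_n|<2^{-k}$. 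No Riemann map, no Koebe or modulus estimates are needed here. Computable compactness follows by the same sandwich (the paper leaves it to the reader). I would recommend replacing your transport-to-the-disk strategy by this inner/outer polygonal approximation.
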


\begin{proof}
First observe that from  $w$ and $\partial W$, one can lower compute  the open sets $W$ and $\CC\setminus \overline{W}$.  In particular, we can for example enumerate all  the ideal points in $\text{Comp}(\CC)$ which are contained in $W$. Similarly, one can enumerate all the ideal points in $\text{Comp}(\CC)$  which are disjoint from $\overline{W}$. With this in mind, let us describe the algorithm $\cA$ which uniformly computes the crosscut distance between two ideal points $s_1$, $s_2$
with precision $2^{-k}$:
\begin{enumerate}
\item  $n:=1$;
\item compute a closed domain $L_n\subset  W$  which is an ideal point in $\text{Comp}(\CC)$
 such that $\dist_H(L_n, \overline{W})<2^{-n}$ (in order to do this, query the oracle to obtain
$S_n\supset \partial W$ which is a $2^{-(n+1)}$-approximation of the boundary $\partial W$, and let
$L_n$ be the closure of the union of bounded connected components of the complement $\CC\setminus S_n$);
\item compute a closed domain $U_n\supset  W$ which is an ideal point in $\text{Comp}(\CC)$
such that $\dist_H(U_n, \overline{W})<2^{-n}$ (this is done similarly to the previous step);
 \item if $s_1$ and $s_2$ do not lie in $L_n$, then go to step (8), else continue;
\item compute a lower bound $l_n$ on $\dist_C^{U_n} (s_1,s_2)$
such that $|l_n-\dist_C^{U_n} (s_1,s_2)|<2^{-n}$;
\item compute an upper bound $u_n$ on $\dist_C^{L_n} (s_1,s_2)$ such that $|u_n-\dist_C^{L_n} (s_1,s_2)|<2^{-n}$;
\item if $|l_n-u_n|<2^{-k}$ output $l_n$ and exit;
\item $n:=n+1$;
\item go to (2)

\end{enumerate}
The correctness of the argument follows from elementary continuity considerations.

The proof of computable compactness of $\widehat W$ is similarly straightforward, and is left to the reader.

\end{proof}

\subsection{Moduli of locally connected domains}
Suppose $\partial W$ is locally connected. The following definition is standard:
\begin{defn}[{\bf Modulus of local connectivity}]
Let $X\subset \RR^2$ is a connected set. Any strictly increasing function $m:(0,a)\to\RR$
is called a {\it modulus of local connectivity} of $X$ if
\begin{itemize}
\item for all $x,y\in X$ such that $\dist(x,y)<r<a$ there exists a connected subset $L\subset X$
containing both $x$ and $y$ with the property $\diam(L)<m(r)$;
\item $m(r)\searrow 0$ as $r\searrow 0$.
\end{itemize}
\end{defn}

\noindent
Of course, the existence of a modulus of local connectivity implies that $X$ is locally connected.
Conversely, every compact connected and locally connected set has a modulus of local connectivity.

We note that every modulus of local connectivity is also a modulus of path connectivity:
\begin{prop}
Let $m(r)$ be a modulus of local connectivity for a connected set $X\subset\RR^2$. Let $x,y\in X$
such that $\dist (x,y)<r$. Then there exists a path $\ell$ between $x$ and $y$ with diameter
at most $m(r)$.
\end{prop}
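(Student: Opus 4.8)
The plan is to extract a path from a connected, locally connected set by a nested-covering argument, building a sequence of increasingly fine connected "tubes" joining $x$ to $y$ and taking a limit. The key point is that a modulus of local connectivity gives us quantitative control: at scale $r$ we can connect nearby points through connected pieces of controlled diameter, so iterating the construction at scales $r, m^{-1}$-type refinements produces a Cauchy sequence of approximating polygonal (or merely continuous) curves whose uniform limit is the desired path $\ell$.

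First I would fix $x,y\in X$ with $\dist(x,y)<r$ and use the first bullet of the definition to obtain a connected set $L_0\subset X$ with $x,y\in L_0$ and $\diam L_0<m(r)$. The goal is to join $x$ to $y$ by a path \emph{inside} $\ov{X}$ (or inside $X$ if $X$ is already compact; one typically passes to $\ov X$, which remains connected and locally connected with the same modulus up to a harmless adjustment, or one invokes that a compact connected locally connected metric space is arcwise connected — the Hahn–Mazurkiewicz circle of ideas). Concretely: choose a finite sequence of points $x=p_0,p_1,\dots,p_N=y$ in $L_0$ which is a $\tfrac{r}{2}$-chain (possible since $L_0$ is connected, hence totally bounded after closure, and chain-connected at every scale); for each consecutive pair apply the modulus at scale $r/2$ to get connected sets $L_i^{(1)}$ of diameter $<m(r/2)$ joining $p_{i-1}$ to $p_i$. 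Concatenating, we have connected pieces covering a "path-like" subset of diameter at most $m(r)+\sum$(small corrections). Recursing — at stage $k$ subdividing into $2^{-k}r$-chains and invoking $m(2^{-k}r)$ — yields, at each dyadic parameter $t\in[0,1]$, a nested sequence of connected sets of diameters $\to 0$; the intersection is a single point $\ell(t)$, and this defines $\ell:[0,1]\to\ov X$. Uniform continuity of $\ell$ follows because points with close dyadic parameters lie in a common stage-$k$ piece of diameter $<m(2^{-k}r)\to 0$, so $\ell$ extends continuously to all of $[0,1]$; and $\diam\ell([0,1])\le \diam L_0 < m(r)$ by construction since every $\ell(t)$ lies in (the closure of) $L_0$.

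The main obstacle I anticipate is the bookkeeping that keeps the \emph{total} diameter of the limiting path bounded by $m(r)$ rather than by $m(r)$ plus an accumulating error from the refinement steps. The clean way around this is to never leave $L_0$: at every stage $k$ one connects $p_{i-1}$ to $p_i$ \emph{within the connected set constructed at the previous stage}, not within a fresh subset of $X$, so all pieces are subsets of $L_0$ and the diameter bound $\diam\ell\le\diam L_0<m(r)$ is automatic. A second, more technical point is justifying that a connected (hence, after closure, compact connected locally connected) set admits fine chains with connecting pieces controlled by the \emph{same} modulus $m$; this is exactly where one uses that $m$ is a modulus for all of $X$, applied at the relevant small scales $2^{-k}r$, so no new input is needed. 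Finally, one should remark that if $X$ is not already closed the path produced lies in $\ov X$; for the intended application (boundaries $\partial W$, which are compact) this is not an issue, but I would state the proposition for compact $X$ or for $\ov X$ to be safe. I expect the write-up to be short once the "stay inside $L_0$" device is in place.
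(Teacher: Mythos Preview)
The paper does not actually prove this proposition; it simply refers the reader to Proposition~2.2 of the Orsay notes of Douady and Hubbard. So there is no in-paper argument to compare against, and your proposal is attempting considerably more than the paper does.

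Your overall strategy --- iterated chain refinement producing a continuous path in the limit --- is the standard one and is sound in outline. But the device you propose for controlling the diameter has a real gap. You write that one should ``never leave $L_0$'' by connecting consecutive chain points ``within the connected set constructed at the previous stage,'' and you justify this with the parenthetical that such a set is, after closure, ``compact connected locally connected.'' That inference fails: a connected subset of a locally connected space need not have locally connected closure (a topologist's sine curve sits inside the locally connected set $X=\RR^2$). The modulus $m$ is a modulus for $X$, not for $L_0$ or for any intermediate $L^{(k)}$; invoking it at stage $k$ produces connected subsets of $X$ that may protrude out of the previous-stage set. So the sets are not nested, and the bound $\diam\ell\le\diam L_0$ is unjustified as written.

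The honest fix is precisely the ``accumulating error'' you were trying to avoid, exploiting the strict monotonicity of $m$. Since $\dist(x,y)<r$, pick $r'\in(\dist(x,y),r)$ and obtain $L_0$ with $\diam L_0<m(r')$; set $\de=m(r)-m(r')>0$. Run the refinement in $X$: take a $\rho_1$-chain in $L_0$, connect consecutive points by connected sets in $X$ of diameter $<m(\rho_1)$, take $\rho_2$-chains inside those, and so on, with the $\rho_k$ chosen so that $\sum_k m(\rho_k)<\de/2$. An easy telescoping shows every stage-$k$ chain point lies within $\sum_{j<k} m(\rho_j)$ of $L_0$, so the limit path $\ell$ lies in the $(\de/2)$-neighborhood of $L_0$ and hence $\diam\ell\le\diam L_0+\de\le m(r)$. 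With this correction your argument goes through; your remarks about passing to the closure when $X$ is not already compact are appropriate for the intended application to $\partial W$.
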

For the proof, see Proposition 2.2 of \cite{Orsay}.

\begin{defn}[{\bf Carath{\'e}odory modulus}] Let $( W,w)$ be a pointed simply-connected domain. A strictly increasing function
$\eta:(0,a)\to\RR$ is called the {\it Carath{\'e}odory modulus} if
 for every crosscut $\gamma$ with $\diam(\gamma)<r<a$ we have $\diam N_\gamma<\eta(r)$.
\end{defn}
We note:
\begin{prop}
There exists a Carath{\'e}odory modulus  $\eta(r)$ such that $\eta(r)\searrow 0$ when $r\searrow 0$
 if and only if the boundary $\partial W$ is locally connected.
\end{prop}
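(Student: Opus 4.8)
The plan is to prove the two implications separately, extracting the quantitative content of the proof of \propref{trivial-impression} for one direction and invoking \propref{trivial-impression2} for the other.

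\emph{Existence of a Carath\'eodory modulus $\Rightarrow$ $\partial W$ locally connected.} Here I would show that such an $\eta$ forces every prime end to have trivial impression, and then quote \propref{trivial-impression2}. Let $p$ be a prime end and $(N_{\gamma_j})_{j=1}^\infty$ a fundamental chain representing it. Since $\diam\gamma_j\to 0$, for all large $j$ we have $\diam\gamma_j<a$, hence $\diam N_{\gamma_j}<\eta(\diam\gamma_j)$; as $\eta(r)\searrow 0$, the nested closed sets $\overline{N_{\gamma_j}}$ have diameters tending to $0$, so $\cI(p)=\bigcap_j\overline{N_{\gamma_j}}$ is a single point. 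This holds for every prime end, so \propref{trivial-impression2} gives that $\partial W$ is locally connected.

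\emph{$\partial W$ locally connected $\Rightarrow$ existence of a Carath\'eodory modulus.} The idea is that the proof of \propref{trivial-impression} is already quantitative. As in that proof $\partial W$ is compact, and it is connected since $W$ is simply connected; being also locally connected it admits a strictly increasing modulus of local connectivity $m:(0,a')\to\RR$ with $m(r)\searrow 0$. Put $\rho_0:=\dist(w,\partial W)>0$, and fix $a\in(0,a')$ small enough that $m(r)+3r<2\rho_0$ for $0<r<a$; I claim $\eta(r):=m(r)+r$ is a Carath\'eodory modulus on $(0,a)$. Take a crosscut $\gamma$ with $\diam\gamma<r<a$; then necessarily $w\notin\overline\gamma$, since an endpoint of $\gamma$ lies in $\partial W$ at distance $\ge\rho_0>r$ from $w$. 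Let $a_1,a_2\in\partial W$ be the endpoints of $\gamma$ and pick, using $m$, a connected $Y\subset\partial W$ with $a_1,a_2\in Y$ and $\diam Y<m(r)$; then $Z:=Y\cup\overline\gamma$ is compact, connected, disjoint from $N_\gamma$, and $\diam Z<m(r)+r=\eta(r)$. Exactly as in the proof of \propref{trivial-impression}, $Z$ separates $N_\gamma$ from the component $M\ni w$ of $W\setminus\gamma$: a simple arc in $\CC\setminus Z$ joining $N_\gamma$ to $M$, closed up by an arc in $W$ crossing $\gamma$ once, would give a Jordan curve separating the two endpoints of $\gamma$, hence the connected set $Y$. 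Moreover $\dist(w,Z)>\rho_0-r$, so $B(w,\rho_0-r)$ lies in the component of $\CC\setminus Z$ containing $w$; that component has diameter $\ge 2(\rho_0-r)>m(r)+r>\diam Z$, so it is not bounded and must be the unbounded one. Hence the distinct component containing $N_\gamma$ is bounded, and $\diam N_\gamma\le\diam Z<\eta(r)$. Finally $\eta(r)=m(r)+r$ is strictly increasing and $\searrow 0$, so it is a Carath\'eodory modulus.

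\emph{Main obstacle.} Both subtle points lie in the second implication. The plane-topology statement that $Z$ genuinely separates the two sides of the crosscut is the same argument already executed in the proof of \propref{trivial-impression}, so I would simply cite it. The point I expect to require most care is excluding the ``wrong'' alternative in which the $w$-side $M$, rather than $N_\gamma$, becomes the small bounded complementary piece of $Z$: this is precisely what the estimate $\dist(w,Z)>\rho_0-r\gg\diam Z$ rules out, and it is the reason $\eta$ is asserted only on a small interval $(0,a)$ — harmless, since the definition of a Carath\'eodory modulus only requires $\eta(r)\searrow 0$.
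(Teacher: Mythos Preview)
Your proof is correct and follows the paper's approach: both directions hinge on the same ideas (trivial impressions via small crosscut neighborhoods, and the separation argument from the proof of \propref{trivial-impression}). Where the paper merely invokes ``compactness considerations'' for the implication $\partial W$ locally connected $\Rightarrow$ Carath\'eodory modulus, you make this explicit by setting $\eta(r)=m(r)+r$ and carefully handling the ``wrong side'' issue; this is exactly the quantitative content already implicit in the proof of \propref{trivial-impression} (where the bound $\eps+\delta$ on $\diam N_{\gamma_j}$ appears), so your argument is a fleshed-out version of the paper's, not a different route.
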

\begin{proof}
Let us begin by assuming that there exists a Carath{\'e}odory modulus with $\eta(r)\searrow 0$. Then the impression of every prime end of
$( W,w)$ is a single point. By Carath{\'e}odory Theorem, this implies that $\partial W$ is a continuous
image of the circle, and hence, is locally connected.

Arguing in the other direction, if $\partial W$ is locally connected, then by \propref{trivial-impression} 
every fundamental chain shrinks to a single
boundary point. The existence of a desired Carath{\'e}odory modulus follows from compactness considerations.

\end{proof}

\section{Statements of the main results}
 To simplify the exposition, we present our results for bounded domains only.
However, all the theorems we formulate below may be stated for general simply-connected domains on the 
Riemann sphere $\riem$. In this case,
the spherical metric on $\riem$ would have to be used instead of the Euclidean both in the statements and in the proofs.

Our first result is the following:
\begin{thm}
\label{main1}
Suppose $( W,w)$ is a bounded simply-connected pointed domain. Suppose the Riemann mapping
$$\phi: W\to\DD\text{ with }\phi(w)=0,\;\phi'(w)>0$$
is computable.
Then there exists an algorithm $\cA$ which, given a representation of a prime end $p\in\widehat W$
computes the value of $\hat\phi(p)\in S^1.$

\end{thm}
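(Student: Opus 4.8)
The plan is to extract from a representation $(t_j)$ of the prime end $p$ a sequence of crosscut neighborhoods $N_{\gamma_j}$ representing $p$, push them forward under $\phi$ to obtain a nested sequence of regions in $\DD$, and show that these regions shrink to the single point $\hat\phi(p)\in S^1$ at a controlled rate. First I would observe that, since $\phi$ is computable, by Proposition~\ref{defcomp} it has a modulus of continuity; more importantly, I would use the standard fact (a consequence of the Koebe distortion theorem, available via \secref{sec:complex}) that the inverse $f=\phi^{-1}$ is uniformly continuous on $\DD$ with a computable modulus. The point of this is quantitative: because $\operatorname{diam}t_j(0,1)\to 0$, the images $\phi(\tau_j)$ are crosscuts of $\DD$ whose endpoints on $S^1$ (in the Carath\'eodory sense) get close together, but to control the crosscut \emph{neighborhood} in $\DD$ I need a modulus in the other direction, i.e. I need that a crosscut of $\DD$ of small diameter cuts off a region of small diameter. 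For $\DD$ this is immediate and explicit: a crosscut of Euclidean diameter $\le r$ separates a region of diameter $\le C\sqrt r$ (worst case when the crosscut is a short chord near the boundary). Combining this with continuity of $f$ back to $W$ gives the needed two-sided control.

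The key steps, in order: (1) From the oracle for $p$, algorithmically produce for each $n$ finitely many of the polygonal curves $t_j$; using the oracle for $\partial W$ (which is available — the hypothesis that $\phi$ is computable is equivalent, by Theorem~\ref{comp-riem-map}, to $W$ being lower-computable open, $\partial W$ lower-computable closed, $w$ computable), verify that a given $t_j([0.1,0.99])$ is a genuine crosscut $\tau_j$ of $W$ with $w\notin\overline{\tau_j}$, and identify the crosscut neighborhood $N_{\tau_j}$ as the component of $W\setminus\tau_j$ not containing $w$; this is a semi-decidable check that one can force to succeed as $n$ grows since $(t_j)$ is a valid representation. (2) Apply $\phi$: compute, with precision $2^{-n}$, an approximation to the curve $\phi(\tau_j)$ — possible since $\phi$ is a computable function and $\tau_j$ is a computable compact set — and thereby compute an arc $A_j\subset S^1$ and a region $D_j\subset\DD$ with $\phi(N_{\tau_j})=D_j$, up to error $2^{-n}$ in Hausdorff distance. (3) Show $\operatorname{diam}A_j\to 0$: since $\bigcap\overline{N_{\tau_j}}=\cI(p)\subset\partial W$ and $f$ extends continuously to $\overline\DD$ (Carath\'eodory Theorem), if $\operatorname{diam}A_j\not\to 0$ then along a subsequence the arcs $A_j$ would contain a fixed arc of definite length, forcing $N_{\tau_j}$ to contain points at definite distance from each other, contradicting $\operatorname{diam}\tau_j\to 0$ together with the crosscut modulus for $W$ — equivalently, use that the $N_{\tau_j}$ are nested with $\operatorname{diam}\tau_j\to0$, so by the shrinking property of fundamental chains the $\overline{N_{\tau_j}}$ shrink to a point of $\partial W$, whose unique preimage is a single point $\zeta\in S^1$, and $A_j$ is squeezed to $\zeta$. (4) Output the center of $A_j$ once its diameter is verified to be $<2^{-k}$: this gives a $2^{-k}$-approximation to $\hat\phi(p)=\zeta$, so $\hat\phi(p)$ is computed.

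The main obstacle is step (3) made \emph{effective}: knowing abstractly that $\operatorname{diam}A_j\to 0$ is not enough for an algorithm — I need a computable estimate on how fast, i.e. a bound on $\operatorname{diam}A_j$ in terms of $\operatorname{diam}\tau_j$ (or in terms of the index $n$ supplied by the oracle). The resolution is that, while no uniform such bound exists for general $W$ (the domain need not be locally connected), one does \emph{not} need a uniform bound: the algorithm can simply keep computing $\operatorname{diam}A_j$ to higher precision and more crosscuts further down the chain, and \emph{halt when it first certifies} $\operatorname{diam}A_j<2^{-k}$. Termination is guaranteed because $\operatorname{diam}A_j\to 0$ genuinely holds (from the Carath\'eodory extension applied to $W$ itself, which is a fixed computable datum), and the diameter of a computed region is upper-semicomputable, so the strict inequality is semi-decidable. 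Thus the algorithm is a search that provably terminates, even though its running time is not controlled — which is exactly the form of the statement, an unconditional computability claim with no complexity bound. A secondary technical point is checking that $\phi(N_{\tau_j})$ is indeed the crosscut neighborhood on the $\DD$ side (i.e. $\phi$ maps $w$-component to $0$-component), which follows since $\phi(w)=0$ and $\phi$ is a homeomorphism of $W$.
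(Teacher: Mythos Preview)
Your approach has a genuine gap at its core. In step (2) you assert that $\phi(\tau_j)$ can be computed because ``$\phi$ is a computable function and $\tau_j$ is a computable compact set,'' and in step (4) that ``the diameter of a computed region is upper-semicomputable.'' Neither holds. A crosscut $\tau_j$ is \emph{not} a compact subset of $W$: by definition its closure meets $\partial W$ at two endpoints, which lie outside the domain of $\phi$. You can therefore only evaluate $\phi$ on compact sub-arcs of $\tau_j$, and this produces sample points of $\phi(\tau_j)\subset\DD$ --- giving a \emph{lower} bound on $\diam D_j$, never an upper one. Certifying $\diam D_j<2^{-k}$ would require controlling the two boundary limits of $\phi$ along $\tau_j$, which is precisely the unknown boundary behavior you are trying to compute; the search-and-halt loop has no certificate available to trigger halting. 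Your auxiliary claims that $f=\phi^{-1}$ is ``uniformly continuous on $\DD$ with a computable modulus'' and that ``$f$ extends continuously to $\overline\DD$'' both presuppose local connectivity of $\partial W$, which is not among the hypotheses of the theorem.

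The paper fills this gap not with a search but with an explicit \emph{a priori} estimate (Proposition~\ref{Lavrentiev}): if $\dist(\gamma,w)\ge M$ and $\diam\gamma\le\eps^2<M/4$, then $\diam\phi(N_\gamma)\le 30\eps/\sqrt{M}$. The proof combines the Koebe One-Quarter and Distortion Theorems (forcing $\phi(\gamma)$ into a thin annulus near $S^1$) with harmonic-measure arguments and the Beurling Projection Theorem (bounding the angular extent of the image). With this inequality in hand the algorithm is immediate: read the representation until some $t_j$ satisfies $\diam t_j<10^{-3}M\delta^2$, then output an approximation of $\phi(t_j(1/2))$; Proposition~\ref{Lavrentiev} guarantees this lies within $\delta$ of $\hat\phi(p)$. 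The hard analytic estimate is doing exactly the work that your soft termination argument attempts --- and fails --- to replace.
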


\noindent
In view of \thmref{comp-riem-map},  we have:
\begin{cor}
\label{cormain1}
Suppose we are given oracles for $W$ as a lower-computable open set, and for $\partial W$ as a lower-computable closed set,
and an oracle for the value of $w$ as well.
Given a representation of a prime end $p\in\widehat W$, the value $\hat\phi(p)\in S^1$ is uniformly computable.
\end{cor}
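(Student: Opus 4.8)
The plan is to derive the corollary as a uniform, oracle-relativized composition of \thmref{comp-riem-map} and \thmref{main1}; no new analytic ingredient is required, only a careful accounting of the inputs consumed by the two algorithms.

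First I would appeal to the \emph{uniform} form of the implication (i)$\Rightarrow$(ii) in \thmref{comp-riem-map}. The construction behind that theorem (Theorem~5.1 of \cite{Her}, ultimately the algorithm of \cite{BBY3}) is effective and uses the domain only through the three oracles in question: from an oracle presenting $W$ as a lower-computable open set, an oracle presenting $\partial W$ as a lower-computable closed set, and an oracle for the point $w$, one obtains a \emph{single} algorithm $\cM$ which, equipped with these three oracles together with an oracle for an arbitrary point $z\in W$ and an input $n$, outputs a rational point within $2^{-n}$ of $\phi(z)$ (and, if needed, likewise for $f=\phi^{-1}$). In short, the conformal map $\phi$ is computable uniformly in oracles for $(W,\partial W,w)$ in the precise sense of \propref{defcomp}.

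Next I would relativize \thmref{main1}. Its proof yields an algorithm $\cA$ which, given a representation of a prime end $p$ and black-box access to an algorithm computing the Riemann map $\phi$, outputs $2^{-n}$-approximations of $\hat\phi(p)\in S^1$; the map $\phi$ is invoked only through calls to that black box. Substituting for the black box the algorithm $\cM$ from the previous paragraph --- every query of which is directed to the fixed oracles for $W$, $\partial W$ and $w$ --- one obtains a single algorithm carrying four oracles (for $W$, for $\partial W$, for $w$, and for $p$) which on input $n$ outputs a rational $2^{-n}$-approximation of $\hat\phi(p)$. This is exactly the assertion that $\hat\phi(p)$ is uniformly computable from the stated data.

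The one point demanding real attention --- what I would regard as the main obstacle --- is verifying that the construction of \thmref{comp-riem-map} is genuinely uniform in its three oracles, so that $\cM$ exists as a single machine rather than a $W$-dependent family relying on non-uniform advice; this is standard for the algorithms of \cite{Her,BBY3} but should be spelled out. A lesser point is to confirm that \thmref{main1} is, as stated, formulated with $\phi$ as an abstract computable function, so that the substitution of $\cM$ for the $\phi$-subroutine is legitimate; this is immediate from the way \thmref{main1} is set up. With these two observations the composition above is valid and the corollary follows.
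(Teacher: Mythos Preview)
Your proposal is correct and matches the paper's approach exactly: the paper derives the corollary in one line (``In view of \thmref{comp-riem-map}'') by composing the computability of $\phi$ furnished by \thmref{comp-riem-map} with the algorithm of \thmref{main1}. Your write-up simply makes explicit the uniformity in the oracles that the paper takes for granted.
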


\noindent
To state a ``global'' version of the above computability result, we use the structure of a computable metric space:
\begin{thm}
\label{main2}
In the presence of oracles for $w$ and for $\partial W$, both the Carath{\'e}odory extension
$$\hat\phi:\widehat W\to\overline\DD\text{ and its inverse }\hat f\equiv \hat\phi^{-1}:\overline\DD\to\widehat W$$
are computable, as functions between computable metric spaces.
\end{thm}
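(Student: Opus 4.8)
The plan is to bootstrap from the already-established facts. By \thmref{cond-comp-metric}, in the presence of oracles for $w$ and for $\partial W$, the space $\widehat W$ with the crosscut distance is a computably compact computable metric space whose ideal points are the rational points of $W$; and $\overline\DD$ is trivially a computably compact computable metric space. By \thmref{comp-riem-map}, these same oracles make $\phi:W\to\DD$ a computable conformal bijection, so the hypothesis of \thmref{main1} is met and we may invoke it. The strategy is therefore: (1) promote $\hat\phi$ to a computable map $\widehat W\to\overline\DD$ using \thmref{main1} plus a density/uniform-continuity argument; (2) deduce computability of $\hat f=\hat\phi^{-1}$ for free from \thmref{invert}, since $\widehat W$ is computably compact.

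For step (1), first note that on the ideal points $s\in\widehat W$ — the rational points of $W$ — we have $\hat\phi(s)=\phi(s)$, which is computable uniformly in $s$ because $\phi$ is computable. The remaining issue is to compute $\hat\phi(x)$ for an arbitrary $x\in\widehat W$ given only an oracle for $x$ in the crosscut-metric sense (which, for a prime end, is exactly an oracle for $p$ in the sense of \secref{sec:carat}, since a Cauchy sequence of rational points for $\dist_C$ converging to $p$ carries the same information as a representation of $p$, up to effective translation; this equivalence should be stated as a short lemma). By \propref{defcomp} it suffices to produce, on input $n$ and with the oracle, an ideal point $s_n'\in\overline\DD$ with $\dist(s_n',\hat\phi(x))<2^{-n}$. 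The key input is a \emph{modulus of continuity} for $\hat\phi:\widehat W\to\overline\DD$: since $\widehat W$ is compact and $\hat\phi$ is a homeomorphism (Carath\'eodory Theorem), $\hat\phi$ is uniformly continuous, so it has a minimal modulus of fluctuation $h(\eps)\searrow 0$. If $x\in W$ this modulus is enough: query the oracle for a rational point $s$ with $\dist_C(s,x)$ small enough that $h$ guarantees $|\phi(s)-\hat\phi(x)|<2^{-n}$, then output a rational approximation of $\phi(s)$. If $x$ is a prime end one argues the same way using that the oracle produces rational points $s$ (the midpoints $t_j(0.5)$ of the representing polygonal curves, say) with $\dist_C(s,x)\to 0$.

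The main obstacle — and the one subtlety worth spelling out — is that this argument as stated needs a \emph{computable} modulus of continuity for $\hat\phi$, i.e. an algorithm which, on input $k$, returns $m$ with $h(2^{-m})\le 2^{-k}$; a priori uniform continuity only gives existence of such $m$. The way around this is the standard trick that for a \emph{computable} function on a \emph{computably compact} domain, a modulus of continuity is itself computable. Concretely, the set of pairs of ideal balls $(B_i\subset\widehat W,\;B_j'\subset\overline\DD)$ with $\hat\phi(\overline{B_i})\subset B_j'$ is semi-decidable (using computability of $\hat\phi$ on ideal points together with Koebe-type a priori estimates, or more cleanly, using \propref{modcomp} applied to the computable function $x\mapsto\dist(\hat\phi(x),\hat\phi(x'))$ on the computably compact product $\{\dist_C(x,x')\le\eps\}$); running over finite subcovers of $\widehat W$ by such $B_i$'s, one effectively certifies, for each target precision $2^{-k}$, a scale $2^{-m}$ at which $\hat\phi$ varies by less than $2^{-k}$. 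Once this computable modulus is in hand, step (1) goes through by \propref{defcomp}, step (2) is an immediate application of \thmref{invert}, and the theorem follows. (One should also remark that the promised generalization to the Riemann sphere requires only replacing the Euclidean metric by the spherical one throughout, as noted at the start of the section.)
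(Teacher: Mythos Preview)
Your overall architecture is the paper's: show $\hat\phi$ is computable, then get $\hat f$ for free from \thmref{invert} using computable compactness of $\widehat W$. You also correctly isolate the crux, namely that to pass from ``$\phi$ computable on the dense ideal points of $\widehat W$'' to ``$\hat\phi$ computable on all of $\widehat W$'' one needs a \emph{computable} modulus of continuity for $\hat\phi$ with respect to $\dist_C$.

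The gap is in how you propose to obtain that modulus. Your ``cleaner'' route is circular: \propref{modcomp} requires the function $(x,x')\mapsto|\hat\phi(x)-\hat\phi(x')|$ to be computable on the computably compact set $\{\dist_C(x,x')\le\eps\}\subset\widehat W\times\widehat W$, but computability of $\hat\phi$ on non-ideal points of $\widehat W$ is precisely what you are trying to establish. Knowing $\phi$ on the dense ideal points alone lets you \emph{lower}-compute $h(\eps)$ (as a supremum over a computable sequence), but not upper-compute it, so you cannot certify $h(2^{-m})\le 2^{-k}$ by search. Your alternative, ``Koebe-type a~priori estimates'', points in the right direction but is too vague: Koebe~$\tfrac14$ and Koebe distortion by themselves control $f'$ in the interior, not the boundary behaviour of $\phi$.

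What the paper actually does is simply invoke \propref{Lavrentiev}, already proved in the course of \thmref{main1}: if a crosscut $\gamma$ has $\diam(\gamma)\le\eps^{2}$ and $\dist(\gamma,w)\ge M$, then $\diam(\phi(N_\gamma))\le 30\eps/\sqrt{M}$. This is exactly the missing explicit, computable modulus for $\hat\phi$ in the crosscut metric (and its proof uses not just Koebe but also the Beurling Projection Theorem via harmonic measure). With it, given an oracle producing ideal points $x_n$ with $\dist_C(x_n,x)<2^{-n}$, one first computes a rational lower bound $M$ on $\dist(x,w)$, chooses $n_k$ so that $2^{-n_k}\le\bigl(\tfrac{\sqrt{M}}{60}\,2^{-k}\bigr)^{2}$, and then computes $\phi(x_{n_k})$ to precision $2^{-k-1}$; \propref{Lavrentiev} guarantees the result is within $2^{-k}$ of $\hat\phi(x)$. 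So your proof becomes correct, and essentially identical to the paper's, once you replace the circular appeal to \propref{modcomp} by a direct citation of \propref{Lavrentiev}.
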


\begin{rem}
Note that the assumptions of \thmref{main2} are stronger than those of \corref{cormain1}: computability of $\partial W$
implies lower computability of $W$ and $\partial W$, but not vice versa.
\end{rem}
\begin{proof}
The assertion that computability of $\partial W$ implies lower computability of $W$ and $\partial W$ is straightforward from the
definitions.

To show that the converse statement does not hold consider the following example.
Let $B\subset \NN$ be a lower-computable, non-computable set, and let $\cA$ be an algorithm which enumerates the elements of $B$.
Let $I$ be the boundary of the unit square $I\equiv \partial([0,1]\times[0,1])$.
Set $x_{i}=1-1/{2i}$. We now modify $I$  about the points $x_{i}$.   If $i\notin B$, we do nothing.
If $i\in B$ and it is enumerated by $\cA$ at step $s$, then remove the segment
from $(x_{i}-s_{i},1)$ to $(x_{i}+s_{i},1)$ where
$$s_{i}=\min\{2^{-s}, 1/(3i^{2})\},$$ and add  straight segments connecting $(x_{i}-s_{i},1)$
to $(x_{i}-s_{i}, 1.5)$  to $(x_{i}+s_{i}, 1.5)$ to $(x_{i}+s_{i},1)$ (we call such a decoration an $i$-peninsula of width $s_i$).
Denote $S$ thus obtained set and let
$W$ be the bounded connected component of $\CC\setminus S$.

Note that computability of $\partial W=S$ would imply computability of $B$: to check whether $i\in B$ it is sufficient to see
whether $S$ has an $i$-peninsula, which is equivalent to $(x_i,1.5)\in S$. Hence, $\partial W$ is not computable.

However, $\partial W$ is lower computable. Indeed, to lower compute $\partial W$, start by drawing $I$ with the segments $(x_{i}-1/(3i^{2}),x_{i}+1/(3i^{2}))$ removed. Then emulate the algorithm $\cA$ enumerating $B$ and at each step $s$:
\begin{itemize}
\item if $i$ is enumerated, then draw the corresponding $i$-fjord about $x_{i}$.
\item For every $j<s$ that has not been enumerated so far,  narrow the removed segment  $(x_{j}-1/(3i^{2}),x_{j}+1/(3i^{2}))$ to $(x_{j}-s_{i},x_{j}+s_{i})$.
\end{itemize}
Similarly, the domain $W$ is lower computable. The procedure to lower compute it is the following: start by drawing the unit
square $[0,1]\times [0,1]$. Then emulate the algorithm $\cA$ enumerating $B$ and at each step $s$:
\begin{itemize}
\item if $i$ is enumerated, add the rectangle bounded by straight segments from
$(x_{i}-s_{i},1)$ to
to $(x_{i}-s_{i}, 1.5)$  to $(x_{i}+s_{i}, 1.5)$ to $(x_{i}+s_{i},1)$  to $(x_{i}-s_{i},1)$.
\end{itemize}

\end{proof}

\noindent
For a domain $ W$ with a locally connected boundary, we may ask when the map $\overline f:\overline\DD\to\overline W$ is computable.
We are able to give a sharp result:
\begin{thm}
\label{main3}
Suppose $( W,w)$ is a pointed simply-connected bounded domain with a locally connected boundary. Assume that  the Riemann map
$$f:\DD\to W\text{ with }f(0)=w,\; f'(0)>0$$
is computable.

Then the boundary extension
$$\overline f:\overline\DD\to\overline W$$
is computable if and only if there exists a computable Carath{\'e}odory modulus $\eta(r)$ with
$\eta(r)\searrow 0$ as $r\searrow 0$.
\end{thm}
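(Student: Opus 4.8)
The plan is to establish the two implications separately. For the "only if" direction, I would assume $\overline{f}$ is computable and extract a computable Carathéodory modulus. Observe that $\overline{f}^{-1}=\overline{\phi}:\overline{W}\to\overline{\DD}$; by \thmref{invert} (using computable compactness of $\overline{W}$, which follows from the oracle for $\partial W$ once we know local connectivity) this inverse is also computable. The key identity is that a crosscut $\gamma$ in $W$ of small diameter corresponds under $\overline{\phi}$ to a crosscut of $\DD$, and the crosscut neighborhood $N_\gamma$ corresponds to a region in $\DD$ cut off by this arc; controlling $\diam N_\gamma$ reduces to controlling how the image arc in $\overline{\DD}$ bounds a small cap, combined with the modulus of continuity of $\overline{f}$. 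Concretely, I would define $\eta(r)$ via a supremum of $\diam N_\gamma$ over all crosscuts $\gamma$ with $\diam\gamma\le r$, and show this supremum is computable by expressing it as a supremum of a computable function over the computably compact space of "admissible crosscuts" (or, more cleanly, over pairs of ideal points at crosscut distance $\le r$, using \thmref{cond-comp-metric} and \propref{modcomp}). The fact that $\eta(r)\searrow 0$ is exactly the content of the last Proposition of \secref{sec:carat} given local connectivity.

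For the "if" direction, I would assume a computable Carathéodory modulus $\eta$ is given together with the computable Riemann map $f$, and build an algorithm computing $\overline{f}$. Given a point $z\in\overline{\DD}$ via an oracle and a target precision $2^{-k}$: if $z$ is (provably) in the open disk we use computability of $f$ directly. Near the boundary, approximate $z$ by $z'\in\DD$ with $|z-z'|$ small, then take a short crosscut $\sigma$ of $\DD$ near $z$ separating $z'$ from $0$ with $\diam\sigma$ controlled; its image $\gamma=f(\sigma)$ is a crosscut of $W$ whose diameter we can estimate using the modulus of continuity of $f$ on the relevant compact subset (available since $f$ is computable and $\overline{\DD}$ is computably compact). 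Then $\eta(\diam\gamma)$ bounds $\diam N_\gamma$, and $f(z')\in N_\gamma$ while $\overline{f}(z)\in \overline{N_\gamma}$, so $|\overline{f}(z)-f(z')|<\eta(\diam\gamma)+o(1)$. Choosing the approximation parameters so that $\eta(\diam\gamma)<2^{-k}$ — possible because $\eta$ is computable and $\eta(r)\to0$, so we can search for a suitable threshold — yields the desired approximation of $\overline{f}(z)$.

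The main obstacle I anticipate is the effective construction of the short crosscut $\sigma$ of $\DD$ near a boundary point $z$ and the effective diameter estimate $\diam f(\sigma)$ in terms of $\diam\sigma$: $f$ is only assumed computable (hence continuous with some modulus), but a priori $f$ is wildly distorted near $\partial\DD$, so a crosscut of Euclidean diameter $\varepsilon$ in $\DD$ need not map to a small crosscut of $W$. The resolution is that we have freedom in choosing $\sigma$: rather than fixing its Euclidean size, we choose $\sigma$ to be (say) an arc of a circle centered at a boundary point, parametrize a one-parameter family of such arcs, and use computability of $f$ to search for one whose image has small diameter — such an arc exists because $\diam f(\sigma)\to0$ as the arc shrinks to a point of $S^1$, by the mere continuity of the Carathéodory extension $\overline{f}$ (whose existence, not computability, we already know). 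Making this search terminate with an explicit bound, and checking that the chosen $\sigma$ genuinely separates $z'$ from $0$ in $\DD$, is the delicate bookkeeping; everything else is assembling the ingredients already developed in \secref{sec:carat} and \secref{sec:intro-comp}.
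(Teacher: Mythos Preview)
Your outline for the ``only if'' direction can be made to work but is more circuitous than necessary. The paper does not compute $\overline\phi$ at all and does not attempt to parametrize a space of crosscuts. Instead it simply computes the modulus of fluctuation $h(\delta)=\sup_{|z-z'|\le\delta}|\overline f(z)-\overline f(z')|$ of $\overline f$ (a supremum of a computable function over the computably compact set $\{(z,z'):|z-z'|\le\delta\}\subset\overline\DD\times\overline\DD$, via Proposition~\ref{modcomp}), and then invokes Corollary~\ref{ContinuityToCaratheodory}, which converts $h$ directly into a Carath{\'e}odory modulus through the explicit inequality $\eta(\epsilon^2)\le h(30\epsilon/\sqrt M)$. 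This avoids having to set up a computably compact space of ``admissible crosscuts,'' which you left vague.

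Your ``if'' direction contains a genuine gap, and you in fact put your finger right on it. The search you propose cannot be carried out: to certify that a candidate crosscut $\sigma\subset\DD$ has $\diam f(\sigma)$ small you must control $f$ at points of $\sigma$ arbitrarily close to $S^1$, which is precisely the information you do not yet have (that is what computing $\overline f$ would give). Knowing abstractly that $\diam f(\sigma)\to 0$ does not let you semi-decide the inequality $\diam f(\sigma)<r^*$, because the supremum defining $\diam f(\sigma)$ is taken over a set that is not compact in~$\DD$. The missing ingredient is an \emph{a priori} length--area estimate: Wolff's Lemma (Lemma~\ref{wolff}) guarantees that among the circular arcs $k_\rho$ of radius $\rho\in(r,\sqrt r)$ centered at a boundary point there is one whose image has length at most $(2\pi A/\log(1/r))^{1/2}$, where $A$ is the area of $W$. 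This yields Warshawski's bound (Theorem~\ref{war1}) on the boundary oscillation, $\cO(r)\le\eta\bigl((2\pi A/\log(1/r))^{1/2}\bigr)$, which is exactly the explicit modulus of continuity for $\overline f$ that your search was trying to discover. With this bound in hand the algorithm is immediate: given $\varepsilon$, pick $r$ so that the right-hand side is below $\varepsilon/2$, take a rational $z'$ within $r/10$ of $z$, and either compute $f(z')$ (if $|z'|>1-r$) or $f(z)$ (if $|z'|<1-r$). No search, no verification.
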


\begin{remark}
With routine modifications, the above result can be made \emph{uniform} in the sense that there is an algorithm which from a description of $f$ and $\eta$ computes a description of $\overline f$, and there is an algorithm which from a description of $\overline f$
computes a Carath{\'e}odory modulus $\eta$. See for example \cite{Her} for statements made in this generality.
\end{remark}

We note that the seemingly more ``exotic'' Carath{\'e}odory modulus cannot be replaced by the modulus of local connectivity in the above
statement:
\begin{thm}[{\bf Computational Incommensurability of Moduli }]
\label{main4}
There exists a simply-connected domain $ W$ such that $\partial W$ is  locally-connected, $\overline W$ is computable,
and there exists a computable Carath{\'e}odory modulus $\eta(r)$, however, no computable modulus of local connectivity exists
for $\partial W$.
\end{thm}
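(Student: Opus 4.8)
I would build $W$ as a "fattened comb'': fix an enumeration $(\cA_k)$ of Turing machines, let $R=[0,1]\times[1,2]$ be a fixed rectangular \emph{spine}, and below the bottom edge of $R$ hang a sequence of closed \emph{teeth} $\Theta_k$, each a roughly square rectangle of side of order $2^{-k}$, placed left to right so that $\Theta_{k+1}$ sits just to the left of $\Theta_k$ and the whole comb accumulates at $p_\infty=(0,1)$. The decisive ingredient is the horizontal gap $g_k$ between $\Theta_k$ and $\Theta_{k+1}$, which I define by \emph{retraction}: run $\cA_k$; as long as it has not halted by step $s$, keep the current gap equal to $2^{-(k+s)}$ (times a fixed small constant to make everything fit in $[0,1]$); if $\cA_k$ halts at step $\tau_k$, freeze $g_k=2^{-(k+\tau_k)}$; if $\cA_k$ never halts, then $g_k=0$ and $\Theta_k,\Theta_{k+1}$ touch. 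Set $W=\mathrm{int}(R)\cup\bigcup_k\mathrm{int}(\Theta_k)$ glued along the exposed bottom edge of $R$, with base point $w\in R$ far from $p_\infty$; this is a bounded simply connected domain since the teeth are disjoint or touching and all attached to the spine.

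\textbf{Checking the "positive'' requirements.} First I would verify $\overline W$ and $\partial W$ are computable: each $g_k$ is a uniformly computable real (after running $\cA_k$ for $T$ steps, either $\tau_k$ is known, or $g_k\in[0,2^{-(k+T)})$, so $0$ is a good approximation), the tooth positions are finite sums of the $g_j$'s, and for $k>n$ the entire tail of the comb lies in a ball of radius $O(2^{-n})$ about $p_\infty$, so Hausdorff approximations to any precision are effectively computable. Next I would check that a computable Carath\'eodory modulus with $\eta(r)\searrow 0$ exists, and here the point is that \emph{roughly square teeth create no thin necks in $W$}: any crosscut of diameter $<r$ has a crosscut neighbourhood of diameter $O(r)$ (it can only cut off a sub-piece of a tooth, a whole tooth of size $O(r)$, or an $O(r)$-piece of spine near $p_\infty$ — the tail cannot be detached by a small crosscut since that would require crossing the full-thickness spine). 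So $\eta(r)=Cr$ works, and by \thmref{main3} the boundary map $\overline f$ is computable. Finally, $\partial W$ is locally connected because the teeth shrink: the only near-self-approaches of $\partial W$ are between $\Theta_k$ and $\Theta_{k+1}$ when $g_k>0$, where two points on the facing edges at distance $\le g_k$ can only be joined inside $\partial W$ by climbing to the spine and back, i.e.\ by a connected set of diameter of order $2^{-k}$; and no such pair exists when $g_k=0$. Hence the worst connecting-diameter satisfies
$$T(\delta):=\sup_{|x-y|<\delta}\ \inf\bigl\{\diam L:\ L\subseteq\partial W\ \text{connected},\ x,y\in L\bigr\}\ \asymp\ \delta\ +\ \sup\bigl\{2^{-k}:\ \cA_k\ \text{halts},\ k+\tau_k\ge\log_2(1/\delta)\bigr\},$$
and since $k+\tau_k$ is finite for each fixed $k$, every $\Theta_k$ eventually leaves the supremum, so $T(\delta)\to0$ and $\partial W$ is locally connected — a modulus of local connectivity exists.

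\textbf{No computable modulus.} Suppose toward a contradiction that $\mu$ is a computable modulus of local connectivity, so $\mu$ is increasing, $\mu(r)\searrow0$, and $\mu(r)\ge T(r)$. If $\cA_k$ halts at step $\tau_k$, then taking $r=2g_k$ the displayed lower bound gives $\mu(r)\ge c\,2^{-k}$ for an absolute constant $c>0$. Since $\mu$ is computable and tends to $0$, I can compute from $k$ an integer $N$ with $\mu(2^{-N})<c\,2^{-k}$. Then $\cA_k$ halts iff it halts within $N$ steps: if it halted at $\tau_k$ we would have $\mu(2g_k)\ge c\,2^{-k}>\mu(2^{-N})$, hence by monotonicity $2g_k>2^{-N}$, i.e.\ $k+\tau_k<N+1$. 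This decides the Halting problem, which is impossible; so no computable modulus of local connectivity exists.

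\textbf{The main obstacle.} The whole difficulty is concentrated in keeping the Carath\'eodory modulus computable while the comb's combinatorics encodes halting. The non-computable data must sit in distances ($g_k$) that are \emph{lengths of necks in the complement of $W$} rather than \emph{widths of necks in $W$}: if the teeth were long thin fjords, or the gadgets were nearly-parallel prongs bounding a thin wedge of $W$, a short crosscut would cut off a disproportionately large neighbourhood and the Carath\'eodory modulus would itself become non-computable by exactly the argument above. Making the teeth roughly square is what drives a wedge between the two moduli; and the retraction definition of $g_k$ is what reconciles the non-computable combinatorics of the comb with the computability of $\overline W$ and of $\overline f$. I expect the bulk of the real work to be the careful estimates behind the two "$\asymp$'' claims — the linear Carath\'eodory bound and the formula for $T(\delta)$ — together with the verification that $\partial W$ is genuinely locally connected at $p_\infty$ (this is the standard fact that a comb whose teeth shrink to $0$ is locally connected).
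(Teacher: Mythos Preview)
Your proposal is correct and follows essentially the same strategy as the paper: encode a non-computable halting-time parameter into a boundary width that governs the modulus of local connectivity but is invisible to the Carath\'eodory modulus, while keeping $\overline W$ computable via the standard retraction trick. The paper's implementation differs cosmetically --- it hangs $i$-slits (for $i\notin B$) and thin $i$-fjords of width $\min(2^{-s},1/(3i^2))$ (for $i$ enumerated at stage $s$) from the top edge of the unit square, obtaining $\eta(r)=2\sqrt r$ rather than your linear modulus --- but the decisive step (compute from a hypothetical modulus $m$ a bound $N$ with $m(2\cdot 2^{-N})$ below the $k$-th connecting diameter, then decide membership/halting by running the enumeration $N$ steps) is identical to yours.
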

Finally, we turn to computational complexity questions in the cases when $\hat\phi$ or $\overline f$ are computable. We show that:
\begin{thm}
\label{main5}
Let $q:\NN\to\NN$ be any computable function. There exist Jordan domains $ W_1\ni 0$, $ W_2\ni 0$ such that the following holds:
\begin{itemize}
\item the closures $\overline W_1$, $\overline W_2$ are computable;
\item the extensions $\overline\phi:\overline W_1\to\overline\DD$ and  $\overline f:\overline \DD\to\overline W_2$  are both computable functions;
\item the time complexity of $\overline f$ and $\overline\phi$ is bounded from below by $q(n)$ for large enough values of $n$.
\end{itemize}

\end{thm}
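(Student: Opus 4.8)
The goal is to realize an arbitrarily slow lower bound on the complexity of the boundary extension by engineering a Jordan domain whose boundary has a ``narrow fjord'' (or spike) whose depth is very slowly revealed. The two parts (for $\overline f$ and for $\overline\phi$) are dual and I will treat them by essentially the same construction, so I describe the case of $\overline f:\overline\DD\to\overline W_2$ in detail.

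First I would fix the computable function $q$ and, without loss of generality, replace it by a computable strictly increasing function dominating it, so that there is a computable sequence of scales $\eps_n=2^{-q(n)}$ (or even faster-shrinking) with $\eps_n\downarrow 0$ computably. Then I would build $W_2$ as a Jordan domain obtained from a fixed reference domain (say a rectangle or a disk) by attaching, near a boundary point $x_0$, a single long thin ``fjord'': a slit-like channel of Euclidean width roughly $\eps_n$ at depth parametrized so that the channel has bounded length but the bottom of the fjord is only ``seen'' at very fine scales. Concretely, the channel occupies a sequence of nested rectangles $R_n$ with $R_{n+1}\subset R_n$, $\diam R_n\to 0$, width of $R_n$ comparable to $\eps_n$; the domain $\overline W_2$ is computable because the whole construction is given by an explicit computable recipe (all corners are computable points, uniformly). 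By the Koebe distortion theorem (available in the excerpt) and standard estimates on harmonic measure / extremal length of such a channel, the Riemann map $f$ pushes an arc $J_n\subset S^1$ of length $\ell_n$ onto the portion of $\partial W_2$ beyond depth $n$ in the fjord, and the key quantitative point is that $\ell_n$ is super-exponentially small compared to $\eps_n$ — i.e. points on $S^1$ that are $\ell_n$-close map to points that are still $\ge c$ apart near the mouth versus the bottom of the fjord. This gives a lower bound on the \emph{minimum modulus of fluctuation} $h$ of $\overline f$: $h(\ell_n)\ge c\,\eps_n$ fails to go to $0$ fast, so the modulus of continuity $\mu$ of $\overline f$ satisfies $\mu(k)\ge q(n_k)$ along a subsequence. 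Then Proposition~\ref{modcon2} immediately converts this into the claimed lower bound on time complexity. The computability of $\overline f$ itself follows from \thmref{main3}: since $\overline W_2$ is computable, $\partial W_2$ is locally connected with an explicit computable Carath\'eodory modulus (the channel is explicit), so $\overline f$ is computable.

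For the map $\overline\phi:\overline W_1\to\overline\DD$ I would run the same idea in the inverse direction: here I want the \emph{inverse} map to have a bad modulus, which corresponds to a domain $W_1$ with an ``outward spike'' (a thin peninsula) rather than a fjord — a long thin rectangular protrusion of width $\eps_n$ at generation $n$, again given by a computable recipe so that $\overline W_1$ is computable and $\partial W_1$ is Jordan. Now the Riemann map $f_1:\DD\to W_1$ squeezes a tiny arc of $S^1$ to the far tip of the $n$-th generation of the spike, so $\overline\phi=\overline f_1^{-1}$ maps two points of $\overline W_1$ — one at the tip, one at the base of the spike — that are Euclidean-distance $\approx\eps_n$ apart to two points of $S^1$ that are $\ge c$ apart; but it also maps a point on one side of the spike to a point on the other side, distance $\approx 2\eps_n$, to antipodal-type points. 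Wait — the cleaner statement is: there are points of $\overline W_1$ at distance $\ell_n$ (across the narrow neck, not through the domain) whose images under $\overline\phi$ are at distance $\ge c$. Again this forces $\mu(k)\ge q(n_k)$ for the modulus of continuity of $\overline\phi$, and Proposition~\ref{modcon2} finishes it. Computability of $\overline\phi$ follows from \thmref{comp-riem-map} together with \thmref{main3} (or directly, since $\overline W_1$ is a computable Jordan domain, its boundary is computable hence lower-computable, and there is an obvious computable Carath\'eodory modulus).

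\textbf{Main obstacle.} The delicate part is the quantitative estimate relating the scale $\eps_n$ of the fjord/peninsula at generation $n$ to the length $\ell_n$ of the corresponding boundary arc on $S^1$, and in particular making sure that the \emph{minimum} modulus of fluctuation (the sup defining $h$ in the excerpt) does not decay faster than we need — i.e. that there genuinely exist pairs of points at distance $\ell_n$ mapped far apart, uniformly, not just for some carefully chosen pair that an algorithm could avoid. This is where one uses extremal length / modulus of the quadrilateral formed by the channel: the modulus of the channel of width $w$ and length $L$ is comparable to $L/w$, and the harmonic measure (equivalently, arclength on $S^1$ via the conformal map) of the far end decays like $e^{-\pi L/w}$; choosing $L$ bounded and $w=\eps_n=2^{-q(n)}$ gives $\ell_n\approx e^{-\pi 2^{q(n)}}$, so $\mu(-\log_2\ell_n)\gtrsim q(n)$ while $-\log_2\ell_n\approx \pi 2^{q(n)}/\ln 2$ — one should set things up (perhaps taking $L$ growing slowly, or adjusting the arithmetic) so that after composing with Proposition~\ref{modcon2} one lands exactly on a lower bound of the form $q(n)$ rather than something weaker; this is a routine but careful bookkeeping of which quantity plays the role of the input size. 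I do not expect conceptual difficulties beyond assembling these standard conformal estimates and the computable recipe for the domain.
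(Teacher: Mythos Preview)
Your overall strategy --- engineer thin geometric features so that the modulus of continuity $\mu(k)$ of the boundary extension is forced to exceed $q(k)$, then invoke Proposition~\ref{modcon2} --- is exactly the mechanism the paper uses. For $W_2$ your single narrowing fjord is a legitimate realization; the paper instead removes a sequence of inward wedges $Q_k$ accumulating at a boundary point and routes the lower bound through the Carath\'eodory modulus via Corollary~\ref{ContinuityToCaratheodory}, but either geometry works once the bookkeeping you flag is carried out.

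There is, however, a genuine gap in your construction of $W_1$. An \emph{outward} peninsula is the wrong shape for making $\overline\phi$ expand. If $W_1$ contains a thin protrusion of width $\eps$, then two boundary points on opposite sides of it at depth $x$ are Euclidean-distance $\approx\eps$ apart, but their $\overline\phi$-images on $S^1$ are separated only by the harmonic measure (from $w$) of the tip-ward portion of the protrusion --- which is \emph{exponentially small} in $x/\eps$ by the very extremal-length estimate you quote. So $\overline\phi$ contracts there; it is $\overline f_1$ that acquires a bad modulus. (Your sentence ``one at the tip, one at the base of the spike \dots\ Euclidean-distance $\approx\eps_n$ apart'' is false for a straight protrusion, and the parenthetical ``across the narrow neck, not through the domain'' is self-contradictory when the neck is part of the domain.) You have the roles of the two constructions reversed.

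The correct picture for $W_1$, which the paper uses, is to \emph{remove} thin inward wedges from $\DD$: then two points of $S^1\subset\partial W_1$ sitting just on either side of a wedge are Euclidean-close in $\overline W_1$, yet $\overline\phi$ sends them to points of $S^1$ separated by an arc whose length equals the harmonic measure of the wedge. The step your proposal does not anticipate is that one must place such a wedge inside an angular window of size $2^{-(n+q(n))}$ while keeping its harmonic measure at least of order $2^{-n}$; this forces the wedge to penetrate a definite depth toward the centre, and the paper secures it by an inductive construction --- Wolff's Lemma furnishes a lower bound on the harmonic measure of each new wedge inside a truncated approximant $\DD_{r_k}$, and the Majoration Principle propagates the earlier bounds as further wedges are added. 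A single narrowing slit would require a comparably careful harmonic-measure calibration that your sketch does not supply.
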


\section{Preliminaries from Complex Analysis}
\label{sec:complex}
\subsection{Distortion Theorems}
We will make use of two standard results in Complex Analysis.

\begin{onequarter}
Let $f:\DD\to W$ be a conformal isomorphsim, and let $ w=f(0)$. Then the distance from $ w$ to the
boundary of $U$ is at least $\frac{1}{4}|f'(0)|$.
\end{onequarter}

\begin{distortion}
Let $f:\DD\to\DD$ be a conformal mapping with the properties $f(0)=0$ and $f'(0)=1$. Then for every $z\in\DD$,
$$ \frac{1-|z|}{(1+|z|^3)}\leq |f'(z)|\leq \frac{1+|z|}{(1-|z|^3)}.   $$
\end{distortion}

\subsection{Harmonic Measure}
A detailed discussion of harmonic measure can be found in \cite{Garnett-Marshall}. Here we briefly recall some of the relevant facts. We will only define harmonic measure for a finitely-connected domain $W\subset\widehat\CC$. Recall that a connected subdomain of $\riem$ is called {\it hyperbolic} if its complement contains at least three points. We start with the following well-known fact:
\begin{prop}
Let $W$ be a finitely-connected hyperbolic subdomain of $\widehat\CC$, and $w\in W$. Then the Brownian path originating at $w$ hits the boundary of $W$ with probability $1$.
\end{prop}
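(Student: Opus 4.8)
The plan is to separate the statement into a soft topological reduction and a genuinely probabilistic core. First I would set $\tau:=\inf\{t\ge 0: B_t\notin W\}$ and observe that it suffices to prove $\tau<\infty$ almost surely, because on the event $\{\tau<\infty\}$ the point $B_\tau$ automatically lies on $\partial W$. Indeed, $\widehat\CC\setminus W$ is closed and $W$ is open (a domain), so $B_0=w\in W$ forces $\tau>0$ by path-continuity; for $t<\tau$ we have $B_t\in W$ by definition of the infimum; and the set $\{t:B_t\in\widehat\CC\setminus W\}$, being the preimage of a closed set under a continuous path, is closed and hence contains its infimum, so $B_\tau\in\widehat\CC\setminus W$ on $\{\tau<\infty\}$; combined with $B_\tau=\lim_{t\uparrow\tau}B_t\in\overline W$ this gives $B_\tau\in\overline W\setminus W=\partial W$. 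Thus ``hitting the boundary'' is free once we know the path exits $W$ in finite time.

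For the finiteness of $\tau$ I would pass to a chart: fix a point $\zeta_0\in\widehat\CC\setminus W$ (the complement is a nonempty compact set) and work in $\widehat\CC\setminus\{\zeta_0\}\cong\CC$, so that $W$ is a plane domain and $K:=\widehat\CC\setminus W$ a nonempty compact subset of $\CC$. Since $W$ is finitely connected, $\partial W$ has a boundary component that is a non-degenerate continuum (this is the case relevant to harmonic measure; a genuine degenerate boundary \emph{point} is polar and simply invisible to Brownian motion, so the proposition is understood with the standing assumption, implicit in the harmonic-measure setup of \cite{Garnett-Marshall}, that $\partial W$ is non-polar). A non-degenerate continuum has positive logarithmic capacity, so $K$ is non-polar, and I would then invoke the standard fact — Kakutani's theorem together with the neighborhood-recurrence of planar Brownian motion — that planar Brownian motion started at \emph{any} point almost surely hits a compact set of positive capacity (one gets hitting with positive probability from Kakutani, then upgrades to probability $1$ via recurrence, returning to a small ball around $w$ infinitely often and applying the strong Markov property with a Borel--Cantelli argument). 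The first such hitting time is precisely $\tau$, giving $\tau<\infty$ a.s. As a sanity check in the model case $W=\DD$, $w=0$: $|B_t|^2-2t$ is a martingale, and optional stopping at $\tau\wedge t$ followed by $t\to\infty$ yields $\mathbb{E}[\tau]=\tfrac12<\infty$, recovering the classical statement that Brownian motion from the center reaches $S^1$ (with the uniform, i.e.\ harmonic-measure, distribution).

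The main obstacle is precisely the step ``$K$ non-polar $\Rightarrow\tau<\infty$ a.s.'': this rests on Kakutani's capacity characterization of polar sets plus the recurrence upgrade, and on the accompanying bookkeeping — one must work inside a fixed chart of the sphere that omits a complementary point so that planar recurrence and potential theory apply, and one must explicitly dispatch the degenerate case where every boundary component is a single point, in which Brownian motion genuinely never reaches $\partial W$ so that the statement holds only under the non-polarity hypothesis noted above. Everything else — the continuity argument of the first paragraph and the martingale computation for the disk — is routine.
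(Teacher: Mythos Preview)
The paper gives no proof of this proposition; it is stated as a ``well-known fact'' immediately before the definition of harmonic measure, with an implicit pointer to \cite{Garnett-Marshall}. Your argument therefore stands on its own, and it is substantially correct.

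Two remarks. First, a minor slip in the chart bookkeeping: after sending $\zeta_0\in\widehat\CC\setminus W$ to $\infty$, the set $K=\widehat\CC\setminus W$ still contains $\infty$ and is \emph{not} a compact subset of $\CC$ as you claim. The repair is immediate---work instead with a compact non-polar subset of $\partial W\setminus\{\zeta_0\}\subset\CC$ (deleting a single point preserves non-polarity), and Kakutani plus neighborhood recurrence then applies exactly as you describe. Second, your caveat about degenerate boundaries is not merely precautionary but essential: under the paper's definition of ``hyperbolic'' (complement containing at least three points), the thrice-punctured sphere $\widehat\CC\setminus\{0,1,\infty\}$ is finitely connected and hyperbolic, yet Brownian motion almost surely never hits a finite set of points. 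So the proposition is false as literally stated, and your reading---that non-polarity of $\partial W$ is an implicit hypothesis inherited from the harmonic-measure framework of \cite{Garnett-Marshall}---is the correct one.
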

Let $W$ be a finitely-connected hyperbolic domain  in $\riem$, and $w\in W$. The {\it harmonic measure}
$\omega_{W,w}$ is defined on the boundary $\partial W$. For a set $E\subset \partial W$ it is equal to the probability that
 the Brownian path originating at $w$ will first hit $\partial W$ within the set $E$.

By way of an example, consider a simply-connected hyperbolic domain $W\subset \riem$ with locally-connected boundary, let $w$ be an arbitrary point of $W$.
Consider the unique conformal Riemann mapping
$$\psi:W\to\DD,\text{ with }\psi(w)=0\text{ and }\psi'(w)>0.$$

By Carath{\'e}odory Theorem, $\psi^{-1}$ extends continuously to map $\overline W\to \bar\DD$. By symmetry considerations,
the harmonic measure $\omega_{\DD,0}$ coincides with the Lebesgue measure $\mu$ on the unit circle $S^1=\partial\DD$. Conformal invariance of Brownian motion
implies that $\omega_{W,w}$ is obtained by pushing forward $\mu$ by $\psi^{-1}|_{S^1}$.

We will repeatedly use the following estimate on the harmonic measure

\begin{prop}[Majoration principle]\label{majoration}
Let $W'\subset W$ be two finitely-connected hyperbolic domains in $\riem$. Let $w\in W'$. Let $$K_1=\partial W\cap\partial W',\; K_2=\partial W\setminus K_1,\; K_3=\partial W'\setminus K_1.$$
 Then
$$\omega_{W,w}(K_2)\leq\omega_{W',w}(K_3).$$
\end{prop}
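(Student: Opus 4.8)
The plan is to use the probabilistic definition of harmonic measure together with a coupling/stopping-time argument for Brownian motion. Fix the point $w\in W'$ and consider Brownian motion $B_t$ started at $w$. Let $\tau'$ be the first hitting time of $\partial W'$ and $\tau$ the first hitting time of $\partial W$; since $W'\subset W$ and $w\in W'$, the Brownian path must cross $\partial W'$ before (or exactly when) it crosses $\partial W$, so $\tau'\le\tau$ almost surely, and $B_{\tau'}\in\partial W'$. The key observation is that $\omega_{W,w}(K_2)$ is the probability that the path first exits $W$ through $K_2=\partial W\setminus\partial W'$, i.e. through a portion of $\partial W$ that is \emph{not} part of $\partial W'$. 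For this event to occur, the path must still be inside $W$ at time $\tau'$ but must have reached $\partial W'$ at a point lying in $K_3=\partial W'\setminus K_1$: indeed the only way the path, having hit $\partial W'$, can still be in $W$ afterward is if the point $B_{\tau'}$ is an interior point of $W$, which forces $B_{\tau'}\notin\partial W$, hence $B_{\tau'}\notin K_1$, hence $B_{\tau'}\in K_3$. (If instead $B_{\tau'}\in K_1\subset\partial W$, then $\tau=\tau'$ and the path exits $W$ exactly at a point of $K_1$, not of $K_2$.)

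Carrying this out: I would argue that the event $\{B_\tau\in K_2\}$ is contained, up to a null set, in the event $\{B_{\tau'}\in K_3\}$. If $B_\tau\in K_2$, then $B_\tau\notin\partial W'$, so the exit point from $W$ is not on $\partial W'$; since $\tau'\le\tau$ and $B_{\tau'}\in\partial W'$, we must have $\tau'<\tau$, and the path segment $B_t$ for $t\in[\tau',\tau)$ stays in the open set $W$. In particular $B_{\tau'}\in W$, so $B_{\tau'}\notin\partial W\supset K_1$, and since $B_{\tau'}\in\partial W'=K_1\cup K_3$ we conclude $B_{\tau'}\in K_3$. Taking probabilities with respect to the Wiener measure based at $w$ gives
$$\omega_{W,w}(K_2)=\mathbb{P}_w[B_\tau\in K_2]\le\mathbb{P}_w[B_{\tau'}\in K_3]=\omega_{W',w}(K_3),$$
which is the claim. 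The hyperbolicity of $W$ and $W'$ and the earlier proposition guarantee that $\tau$ and $\tau'$ are finite almost surely, so all these events are well-defined and the total masses are $1$.

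I expect the main (minor) obstacle to be the careful handling of the boundary behavior at points of $K_1$: one must make sure that when $B_{\tau'}\in K_1$ the path does indeed exit $W$ at that moment (so it does not contribute to $\omega_{W,w}(K_2)$), and that the set of sample paths for which $B_{\tau'}$ lands in some exceptional subset of $\partial W'$ (e.g. where $W'$ is not locally nice, or where $B_{\tau'}$ is a limit point of both $\partial W$ and the interior of $W$) has harmonic-measure zero or is otherwise harmless. Since $W,W'$ are finitely connected, $\partial W'$ decomposes into the three Borel pieces $K_1,K_2,K_3$ cleanly and these measurability issues are routine; alternatively one can bypass them entirely by noting $\omega_{W',w}=\omega_{W',w}|_{K_1}+\omega_{W',w}|_{K_3}$ as measures and that the first-exit point $B_{\tau'}$ is a.s.\ in $K_1\cup K_3$, so the inequality reduces to the set inclusion of events described above. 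An alternative, purely potential-theoretic proof replaces Brownian motion by the maximum principle: the function $u(z):=\omega_{W',z}(K_3)$ is harmonic on $W'$, bounded between $0$ and $1$, and has boundary values $\ge\mathbf{1}_{K_2}$ on $\partial W$ (interpreting $\partial W$ through $W'$), so by the maximum principle $u\ge\omega_{W,\cdot}(K_2)$ on $W$; evaluating at $w$ finishes the proof. I would present the Brownian-motion version as the primary argument since it is the most transparent.
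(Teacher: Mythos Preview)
Your argument is correct and is exactly the approach the paper takes: the paper's proof is the single sentence ``Evidently, the Brownian path originating at $w$ which exits $W$ through $K_2$ must exit $W'$ through $K_3$,'' which is precisely the event inclusion $\{B_\tau\in K_2\}\subset\{B_{\tau'}\in K_3\}$ that you spell out carefully. Your version is simply a more detailed write-up of the same idea.
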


\begin{proof}
Evidently, the Brownian path originating at $w$ which exits $W$ through $K_2$ must exit $W'$ through $K_3$. The statement now follows from the definition of harmonic measure.
\end{proof}
The proof of the following classical result can be found in \cite{Garnett-Marshall}:
\begin{bpt}
If $K\subset \bar\DD\setminus\{ 0\}$, and $K^*\equiv \{|w|\;:\; w\in K\}$ is the circular projection of $K$, then for every $z\in\DD\setminus K$
$$\omega_{\DD\setminus K, z}(K)\geq \omega_{\DD\setminus K^*,-|z|}(K^*).$$
\end{bpt}

\subsection{Estimating the variation of $f:\DD\to W$ (Warshawski's Theorems).}
We will quote several results from the beatifully concise paper of S.E.~Warshawski \cite{War}.
Let us consider a conformal map $f:\DD\to W$. Without assuming that $ W$ is necessarily Jordan,
we can make the following definition.

\begin{defn}
Let $|z_0|=1.$ For $0<r<1$, define
$$\cO(r;z_0)=\underset{|z_k-z_0|\leq r}{\sup}|f(z_1)-f(z_2)|\text{ where }|z_1|<1,\;|z_2|<1,$$
and
$$\cO(r)=\underset{|z_0|=1}{\sup}\cO(r;z_0).$$
The quantity $\cO(r)$ is called the {\it oscillation of }$f(z)$ {\it at the boundary.}
\end{defn}

The first theorem in \cite{War} is the following:
\begin{thm}
\label{war1}
Suppose $ W\ni 0$ is a simply-connected bounded region, and $f:\DD\to  W$ is a conformal mapping such that $f(0)=0$.
Assume that $\eta(r)$ is a Carath{\'e}odory modulus of $( W,0)$.
If $A$ denotes the area of $ W$, then the oscillation of $f(z)$ at the boundary
$$\cO(r)\leq \eta\left(\left(\frac{2\pi A}{\log 1/r}\right)^{1/2}\right),\text{ for all }r\in(0,1).$$

\end{thm}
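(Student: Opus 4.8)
The plan is to bound the oscillation $\cO(r)$ by a two-step argument: first use a length–area (extremal length) estimate to find, for any boundary point $z_0$ and any small $r$, a circular crosscut arc $C_\rho$ of $\DD$ centered at $z_0$ whose image $f(C_\rho)$ has small Euclidean diameter; then invoke the Carath\'eodory modulus $\eta$ to conclude that the image of the crosscut neighborhood cut off by $f(C_\rho)$ — which contains $f(z_1)$ and $f(z_2)$ for all $z_1,z_2$ with $|z_k-z_0|\le r$ — has diameter at most $\eta(\diam f(C_\rho))$.

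First I would set up the length–area inequality. Fix $z_0$ with $|z_0|=1$ and, for $\rho\in(r,\sqrt r)$, let $C_\rho=\{z\in\DD:|z-z_0|=\rho\}$, an arc inside $\DD$ whose endpoints lie on $S^1$; as $\rho$ ranges over $(r,\sqrt r)$ these arcs foliate an annular region and separate the arc $\{|z-z_0|\le r\}\cap\DD$ from the rest of $\DD$. The length of the image curve is $L(\rho)=\int_{C_\rho}|f'|\,|dz|$, and by Cauchy–Schwarz $L(\rho)^2\le \pi\rho\int_{C_\rho}|f'|^2\,|dz|$ (the arc $C_\rho$ has length at most $\pi\rho$). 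Integrating $dz/\rho$ over $\rho\in(r,\sqrt r)$ and using $\int\!\!\int_{\DD}|f'|^2\,dA=\mathrm{Area}(f(\DD))=A$ gives
$$\int_r^{\sqrt r}\frac{L(\rho)^2}{\pi\rho}\,d\rho\le \int\!\!\int_{\{r<|z-z_0|<\sqrt r\}\cap\DD}|f'|^2\,dA\le A,$$
so $\inf_{\rho}L(\rho)^2\cdot\frac{1}{\pi}\log\frac{\sqrt r}{r}\le A$, i.e. there exists $\rho^*\in(r,\sqrt r)$ with
$$L(\rho^*)^2\le \frac{\pi A}{\log(1/\sqrt r)}=\frac{2\pi A}{\log(1/r)}.$$
Hence the crosscut $\gamma:=f(C_{\rho^*})$ of $W$ satisfies $\diam\gamma\le L(\rho^*)\le\left(\frac{2\pi A}{\log 1/r}\right)^{1/2}$.

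Next I would translate this into the oscillation bound. The arc $C_{\rho^*}$ separates $\DD$ into two pieces; since $\rho^*>r$, the piece containing the "cap" $\{|z-z_0|<r\}\cap\DD$ is one of them, and (taking $r$ small so that $0$ is not in that cap) its image under $f$ is exactly the crosscut neighborhood $N_\gamma$ of the crosscut $\gamma$ in $(W,0)$. Every pair $z_1,z_2$ with $|z_k-z_0|\le r$ lies in the closure of this cap, so $f(z_1),f(z_2)\in\overline{N_\gamma}$, and therefore $|f(z_1)-f(z_2)|\le\diam N_\gamma\le\eta(\diam\gamma)\le\eta\!\left(\left(\frac{2\pi A}{\log 1/r}\right)^{1/2}\right)$, using monotonicity of $\eta$ and the definition of the Carath\'eodory modulus. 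Taking the supremum over $z_1,z_2$ and then over $z_0$ gives the claim. The main technical obstacle is the careful verification that the arc $C_{\rho^*}$ is genuinely a crosscut of $\DD$ (endpoints on $S^1$, interior in $\DD$) and that its image is a crosscut of $W$ separating the cap's image from $0$ — this requires being slightly careful about the geometry of circles centered on $S^1$ and choosing $r$ small enough (e.g. $r<$ some absolute constant, and small relative to $\mathrm{dist}(0,S^1\cap\text{supp})$); the length–area step itself is routine once set up.
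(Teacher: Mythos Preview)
Your proposal is correct and follows essentially the same route as the paper: the length--area step is exactly Wolff's Lemma (\lemref{wolff}), and the conversion to an oscillation bound via the Carath\'eodory modulus matches the paper's argument verbatim. Your final caveat about needing $r$ small is unnecessary, since $\rho^*<\sqrt r<1=|z_0|$ already guarantees $0\notin\{|z-z_0|<\rho^*\}\cap\DD$, so the image of the cap is always the crosscut neighborhood $N_\gamma$ not containing $0=f(0)$.
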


\noindent
The proof follows easily from Wolff's Lemma \cite{Wolff}:
\begin{lem}
\label{wolff}
Suppose that $f$ is a conformal mapping of $\DD$ onto a simply-connected bounded region $ W$. Let $z_0\in S^1$, and let $k_r$
be the arc of the circle $|z-z_0|=r$ which is contained in $\DD$. Then for every $r\in(0,1)$ there exists $\rho^*\in(r,\sqrt{r})$
such that the image of $k_{\rho^*}$ is a crosscut of $ W$ of length
\begin{equation}
\label{eq:wolff}
\ell_{\rho^*}\leq\left(\frac{2\pi A}{\log 1/r}\right)^{1/2}.
\end{equation}
\end{lem}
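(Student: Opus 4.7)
The plan is to carry out the classical length-area estimate. Fix $z_0 \in S^1$ and for each $\rho \in (0,1)$ parameterise the arc $k_\rho$ in polar coordinates centered at $z_0$, writing $z = z_0+\rho e^{i\theta}$. Then $|dz|=\rho\,d\theta$ on $k_\rho$ and the arc subtends an angle of at most $\pi$ inside $\DD$, so the Euclidean length of $k_\rho$ is at most $\pi\rho$. The length of the image curve is
$$\ell_\rho \;=\; \int_{k_\rho} |f'(z)|\,|dz| \;=\; \int |f'(z_0+\rho e^{i\theta})|\,\rho\,d\theta.$$

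By Cauchy--Schwarz applied with respect to $d\theta$,
$$\ell_\rho^2 \;\leq\; \Bigl(\int d\theta\Bigr)\cdot\Bigl(\int |f'|^2\rho^2\,d\theta\Bigr) \;\leq\; \pi\rho\,\int_{k_\rho} |f'|^2\,|dz|.$$

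Next I would use that the area of the image $f(\{z\in\DD: r<|z-z_0|<\sqrt r\})$ is bounded by $A$. In polar coordinates around $z_0$ the change-of-variables formula gives
$$A \;\geq\; \int_r^{\sqrt r}\!\!\int_{k_\rho} |f'(z_0+\rho e^{i\theta})|^2\,\rho\,d\theta\,d\rho \;\geq\; \int_r^{\sqrt r} \frac{\ell_\rho^2}{\pi\rho}\,d\rho,$$
where the last inequality is the Cauchy--Schwarz bound above. Pulling the minimum of $\ell_\rho^2$ out of the integral and using $\int_r^{\sqrt r} d\rho/\rho = \tfrac{1}{2}\log(1/r)$ yields
$$\min_{\rho\in(r,\sqrt r)} \ell_\rho^{\,2} \;\leq\; \frac{2\pi A}{\log(1/r)},$$
which is exactly the quantitative bound \eqref{eq:wolff}.

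The remaining step -- and arguably the subtlest point -- is to verify that some $\rho^*$ attaining (or near-attaining) this minimum actually produces a \emph{crosscut}, not merely a curve of finite length. Since $\int_r^{\sqrt r}\ell_\rho^2\,d\rho/\rho$ is finite, the set of $\rho\in(r,\sqrt r)$ with $\ell_\rho=\infty$ has measure zero, so we may choose $\rho^*$ with $\ell_{\rho^*}<\infty$ and $\ell_{\rho^*}^2\leq 2\pi A/\log(1/r)$. Finiteness of length forces $f|_{k_{\rho^*}}$ to extend continuously to the two endpoints of the closure $\overline{k_{\rho^*}}\subset\overline\DD$, and those endpoints lie on $S^1$; since $f:\DD\to W$ is a conformal equivalence, points of $\DD$ approaching $S^1$ are mapped to points approaching $\partial W$, so the two limit points lie on $\partial W$. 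Injectivity of $f$ on $\DD$ makes $f(k_{\rho^*})$ a simple curve, and thus the closure of $f(k_{\rho^*})$ is a crosscut of $W$ of length at most $\bigl(2\pi A/\log(1/r)\bigr)^{1/2}$, completing the proof.
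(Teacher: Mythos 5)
Your argument is correct and is essentially the same length--area estimate as the paper's proof: polar coordinates about $z_0$, Cauchy--Schwarz on $k_\rho$, integration of $\ell_\rho^2/\rho$ over $(r,\sqrt r)$ against the area bound, and a pigeonhole choice of $\rho^*$. The only difference is that you spell out why finite length of $f(k_{\rho^*})$ makes it a crosscut (continuous extension to the endpoints, limits on $\partial W$, injectivity), a step the paper simply asserts; this is a welcome but inessential addition.
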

\begin{proof}
We introduce polar coordinates about $z_0$ and write, for $\rho\in(0,1),$
$$\ell_\rho=\int_{k_\rho}|f'(z)||dz|=\int_{k_\rho}|f'(z_0+\rho e^{i\theta})|\rho d\theta\leq +\infty.$$
By Cauchy-Schwarz-Buniakowsky Inequality,
$$\ell_\rho^2\leq \int_{k_\rho}|f'(z_0+\rho e^{i\theta})|^2\rho d\theta\int_{k_\rho}\rho d\theta\leq \pi\rho \int_{k_\rho}|f'(z+\rho e^{i}\theta)|^2\rho d\theta.$$
Integrating with respect to $\rho$ from $r$ to $\sqrt{r}$, we obtain
$$\int_r^{\sqrt{r}}\frac{\ell_\rho^2}{\rho}d\rho<\int_0^{\sqrt{r}}\frac{\ell_\rho^2}{\rho}d\rho\leq \pi\int_0^{\sqrt{r}}\int_{k_\rho}|f'(z_0+\rho e^{i\theta})|^2\rho d\theta<\pi A.$$
Hence, there exists $\rho^*\in(r,\sqrt{r}), $ such that
$$\ell_{\rho^*}^2\int_r^{\sqrt{r}}
\frac{d\rho}{\rho}=
\frac{1}{2}\ell_{\rho^*}^2
\log\frac{1}{r}<\pi A.$$
Since the image of $k_{\rho^*}$ has a finite length, it is a crosscut in $ W$, and the proof is completed.

\end{proof}

\begin{proof}[Proof of \thmref{war1}]
Let
$$T_r\equiv f(\{|z-z_0|<r,\;|z|<1\})\subset W.$$
Select $\rho^*\in(r,\sqrt{r})$ so that (\ref{eq:wolff}) holds. Then $T_r\subset T_{\rho^*}$. The image
$$\gamma\equiv f(k_{\rho^*})$$
is a crosscut of $ W$ with $\diam\gamma\leq\ell_{\rho^*}.$ Hence, by definition of a Carath{\'e}odory modulus,
$$\diam T_r\leq \diam T_{\rho^*}\leq  \eta\left(\left(\frac{2\pi A}{\log 1/r}\right)^{1/2}\right).$$
If $z_1,z_2\in\DD\cap \{|z-z_0|<r\},$ then $f(z_1)$, $f(z_2)\in T_r$, and the proof is completed.
\end{proof}

We quote another theorem of \cite{War} without a proof. First, we make a definition:
\begin{defn}

Let $ W_1$ and $ W_2$ be two simply-connected regions. Let us define the {\it inner distance} between $ W_1$, $ W_2$
as
$$\dist_i( W_1, W_2)\equiv \max(d(\partial W_1\cap\overline W_2,\partial W_2),d(\partial W_2\cap\overline W_1,\partial W_1),$$
where, as usual, $d(X,Y)$ denotes the ``one-sided'' distance
$$d(X,Y)=\sup_{x\in X}\inf_{y\in Y}\dist(x,y).$$
\end{defn}

\begin{thm}
\label{war2}
Suppose $ W_i\ni 0$ for $i=1,2$ are two simply-connected bounded regions.
Let $\sigma=\dist(0,\partial W_1\cup\partial W_2).$
Suppose $\dist_i( W_1, W_2)<\eps$ where $\eps\in(0,1)$ and $\eps<\sigma/64.$ Let $\eta_i(r)$ denote the Carath{\'e}odory modulus
of $ W_i$. Let $A_i$ be the area of $ W_i$.
Denote $f_i(z)$ the conformal map $\DD\to W_i$ with $f_i(0)=0$, $f_i'(0)>0$. Then for $z\in\DD$ we have
$$|f_1(z)-f_2(z)|\leq\left( 1+\frac{k}{\sigma^{1/2}}\eps^{1/4}\log\frac{4}{\eps} \right)\left[\eta_1\left(\left(\frac{8\pi A_1}{\log(1/\eps)}\right)^{1/2}\right)+\eta_2\left(\left(\frac{8\pi A_2}{\log(1/\eps)}\right)^{1/2}\right)\right],$$
where $k$ is a constant $k\leq 16e$.
\end{thm}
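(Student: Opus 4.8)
My plan is to convert the hypothesis $\dist_i(W_1,W_2)<\eps$ into the disk coordinates of the two Riemann maps via the Koebe theorems, reduce to a boundary estimate by the maximum principle, and then match the two maps near $\partial\DD$ by running the argument behind \thmref{war1} (Wolff's Lemma together with the Carath{\'e}odory moduli) \emph{simultaneously} for $f_1$ and $f_2$.

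\emph{Normalization.} Since $\sigma=\dist(0,\partial W_1\cup\partial W_2)$, the ball $B(0,\sigma)$ lies in $W_1\cap W_2$, so by the Schwarz lemma $f_i'(0)\ge\sigma$. Combining the Koebe one-quarter theorem with the Koebe distortion theorem yields the dictionary I will use throughout: if $f_i(z)$ lies within $\eps$ of $\CC\setminus W_i$ then $1-|z|\le C(\eps/\sigma)^{1/2}$ for an absolute $C$, while $|f_i'(z)|\gtrsim\sigma(1-|z|)$ on all of $\DD$. Because $\dist_i(W_1,W_2)<\eps$ forces every point of $\partial W_2$ lying in $W_1$ to be within $\eps$ of $\partial W_1$ (and symmetrically), the set $f_1^{-1}(\partial W_2\cap W_1)$ — which contains the boundary of the component $\Omega$ of $f_1^{-1}(W_1\cap W_2)$ through $0$ — lies in the collar $\{1-|z|<\delta_0\}$ with $\delta_0:=C(\eps/\sigma)^{1/2}$; hence $\DD\setminus\Omega$ lies in this collar, and likewise for $f_2$. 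The assumption $\eps<\sigma/64$ is exactly what makes this collar thin, and this is the only quantitative use of the hypotheses.

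\emph{Reduction and the Wolff step.} As $f_1-f_2$ is holomorphic and bounded, the maximum principle gives $\sup_{\DD}|f_1-f_2|=\limsup_{|z|\to1}|f_1(z)-f_2(z)|$, so fix $z$ near $\partial\DD$ and a boundary point $z_0$ with $|z-z_0|$ comparable to a scale $r=r(\eps)$ to be optimized. Applying Wolff's Lemma (\lemref{wolff}) to $f_1$ produces $\rho^*\in(r,\sqrt r)$ with $\gamma_1=f_1(k_{\rho^*})$ a crosscut of $W_1$ of length $\le(2\pi A_1/\log(1/r))^{1/2}$; as in the proof of \thmref{war1}, $\diam N_{\gamma_1}\le\eta_1((2\pi A_1/\log(1/r))^{1/2})$. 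Its endpoints lie on $\partial W_1$, hence within $\eps$ of $\partial W_2$, and using this and the shortness of $\gamma_1$ I would produce a genuine crosscut $\gamma_2$ of $W_2$ within Hausdorff distance $O(\eps)$ of $\gamma_1$, so that $\diam N_{\gamma_2}\le\eta_2((2\pi A_2/\log(1/r))^{1/2})$. Choosing $r$ of the form $\eps^{\beta}$ so that these arguments become $(8\pi A_i/\log(1/\eps))^{1/2}$, and collecting the error terms — the collar has width $\asymp(\eps/\sigma)^{1/2}$, on it $|f_i'|\gtrsim\sigma(1-|z|)$, the Wolff interval $(r,\sqrt r)$ has length $\asymp\eps^{1/4}$, and the $\int_r^{\sqrt r}d\rho/\rho$ in Wolff's Lemma contributes the $\log(1/\eps)$ — one arrives, after a routine optimization with an absolute $k\le 16e$, at
$$|f_1(z)-f_2(z)|\le\Bigl(1+\tfrac{k}{\sigma^{1/2}}\eps^{1/4}\log\tfrac{4}{\eps}\Bigr)\Bigl[\diam N_{\gamma_1}+\diam N_{\gamma_2}\Bigr],$$
which is the claim once the $\diam N_{\gamma_i}$ are bounded as above.

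\emph{The main obstacle.} The crux, which I expect to be by far the hardest part, is precisely the step that produces $\gamma_2$ together with the assertion that $z$ lies in the region it cuts off and $f_2(z)\in\overline{N_{\gamma_2}}$: the two Riemann maps are defined intrinsically, so there is no a priori reason that the circle $\{|z|=\rho^*\}$ is carried by $f_2$ to anything near $\gamma_1$. The \emph{radial} coincidence is forced by the collar dictionary above, but the \emph{angular} part of the boundary correspondence of $f_2$ relative to that of $f_1$ must be controlled directly. I would do this by a harmonic-measure comparison: since $W_1$ and $W_2$ share most of their boundary and differ only inside thin collars, the Majoration principle (\propref{majoration}) together with the Beurling Projection Theorem show that $\omega_{W_1,0}$ and $\omega_{W_2,0}$ nearly agree on $\partial W_1\cap\partial W_2$, which pins down $f_2^{-1}\circ f_1$ on $\partial\DD$ to within an error of the collar scale and thereby legitimizes using a single $\rho^*$ for both maps. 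Once this matching lemma is in place, the rest is the bookkeeping sketched above.
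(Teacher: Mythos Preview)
The paper does not prove this theorem: immediately before the statement it says ``We quote another theorem of \cite{War} without a proof.'' So there is no in-paper argument to compare your proposal against; the result is simply imported from Warshawski's original article.

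As for your sketch itself: the overall architecture --- Koebe to localize the mismatch in a thin collar, maximum principle to reduce to the boundary, Wolff's Lemma plus the Carath{\'e}odory moduli to bound the oscillation there --- is indeed the circle of ideas Warshawski uses. You are right that the crux is the ``angular'' matching of the two boundary correspondences, and a harmonic-measure comparison is a natural way to attack it. Two cautions. First, your maximum-principle reduction as stated needs care: $f_1-f_2$ is holomorphic on $\DD$, but the relevant information lives on $\Omega=f_1^{-1}(W_1\cap W_2)$ (and its $f_2$-analogue), and it is on $\partial\Omega$, not on $\partial\DD$, that you actually have control; the argument should be run on $\Omega$, or equivalently on the composition $g=f_2^{-1}\circ f_1$ restricted to $\Omega$. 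Second, ``producing a genuine crosscut $\gamma_2$ of $W_2$ within Hausdorff distance $O(\eps)$ of $\gamma_1$'' is more delicate than you suggest: $\gamma_1$ may dip in and out of $W_2$ many times, and one has to argue that a suitable subarc (or slight modification) is a single crosscut of $W_2$ cutting off the right neighborhood. Warshawski handles these points; if you want to reconstruct the proof yourself rather than cite \cite{War}, that is where the real work lies.
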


\section{Proofs}

\subsection{Proof of \thmref{main1}}

We will show that from a computable description $\{\gamma_{i}\}$ of a  prime end $p$, we can compute a description of the point $z=\hat{\varphi}(p)\in \partial{D}$.
We will describe an algorithm that,  given $\{A_{i}\}$ and $\epsilon$, will find $z'\in D$ such that $|z-z'|\leq \epsilon$. The following proposition will give us the key estimate.

\begin{proposition}\label{Lavrentiev} Let $W$ be a connected domain with $\infty\not\in W$. Let $\gamma$ be a crosscut of $ W$, $\dist(\gamma,  w)\geq M$, for some $M>0$ and $ N_\gamma$ be the component of $ W\setminus\gamma$ not containing $ w$. Assume that $\epsilon^2<M/4$. Then
$$\diam(\gamma)\leq \epsilon^{2} \implies \diam(\varphi( N_\gamma))\leq \frac{30\epsilon}{\sqrt{M}}.$$
\end{proposition}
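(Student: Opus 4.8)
The estimate is a quantitative Carathéodory-type bound, and the natural tool is harmonic measure together with the Beurling Projection Theorem, transported to the disk by conformal invariance. The plan is as follows. First I would normalize: conjugating by $\varphi$, I may work in $\DD$ with $\varphi(w)=0$, and set $\delta=\varphi(N_\gamma)$, $c=\varphi(\gamma)$, which is a crosscut of $\DD$ bounding the region $\delta$ (the component of $\DD\setminus c$ not containing $0$). The goal becomes: bound $\diam(\delta)$. The key intermediate quantity is the harmonic measure $\omega_{\DD,0}(I)$ of the circular arc $I=\overline\delta\cap S^1$, equivalently $\operatorname{Leb}(I)/2\pi$, since $\overline\delta\cap S^1$ carries exactly the harmonic measure that Brownian motion from $0$ deposits ``behind'' the crosscut $c$. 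Once $\omega_{\DD,0}(\overline\delta)$ is small, elementary hyperbolic geometry of $\DD$ forces $\diam\delta$ to be small (a crosscut region subtending a short boundary arc and separated from $0$ is contained in a small horoball-type region); this will give the factor and the $\sqrt{M}$ dependence after undoing the normalization.

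Second, and this is the crux, I must bound $\omega_{\DD,0}(\overline\delta)$ from above in terms of $\diam\gamma$ and $M=\dist(\gamma,w)$. By conformal invariance of harmonic measure, $\omega_{\DD,0}(\overline\delta)=\omega_{W,w}(\mathcal I)$ for the corresponding subset $\mathcal I$ of the Carathéodory boundary, and the Majoration Principle (Proposition~\ref{majoration}) lets me replace $W$ by the larger domain $\CC\setminus\overline{B(w,M)}$... more precisely, since $N_\gamma$ is separated from $w$ by the crosscut $\gamma\subset W\setminus B(w,M)$, any Brownian path from $w$ reaching ``behind'' $\gamma$ must first exit the annulus-like region between $\partial B(w,M)$ and $\gamma$. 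Applying the Beurling Projection Theorem after rescaling by $1/M$ and translating $w$ to the origin: the set $\gamma$ has diameter $\le\epsilon^2$, hence after rescaling its circular projection onto $[\,0,\infty)$ (distance from $w$) is contained in an interval of length $\le\epsilon^2/M$ starting at distance $\ge 1$; Beurling's theorem then majorizes $\omega_{\DD\setminus K,z}(K)$ by the harmonic measure of a radial slit, which for a slit of length $\sim\epsilon^2/M$ at distance $\sim 1$ is $O(\epsilon/\sqrt{M})$ (the square-root loss is exactly the Beurling/length-of-slit phenomenon). This yields $\omega_{\DD,0}(\overline\delta)\lesssim \epsilon/\sqrt M$.

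Third, I convert the harmonic-measure bound into the diameter bound. If $\omega_{\DD,0}(\overline\delta\cap S^1)=:\theta/2\pi$ is small, then $\overline\delta$ lies in the region of $\overline\DD$ cut off by a crosscut subtending an arc of length $\theta$; since $c=\partial\delta\cap\DD$ is a crosscut separating $\overline\delta$ from $0$, and Brownian motion from $0$ hits $c$ with probability $\le$ (probability of then exiting through $\overline\delta\cap S^1$) — actually one uses that $\omega_{\DD,0}(c)\le$ const $\cdot\,\omega_{\DD,0}(\overline\delta\cap S^1)$ via the strong Markov property — one gets $\operatorname{diam}(c)$ small, and then $\operatorname{diam}(\delta)\le\operatorname{diam}(c)+\theta\lesssim \epsilon/\sqrt M$, with explicit constant. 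Tracking the constants through Beurling (which is sharp) and the geometry should comfortably fit under $30$, and the hypothesis $\epsilon^2<M/4$ guarantees the slit in the rescaled picture is short enough (length $<1/4$) for the estimates to be in their clean regime. The main obstacle is the bookkeeping in the second step: correctly setting up the comparison domain so that the Beurling Projection Theorem applies cleanly, and making sure the square-root-of-length behavior produces exactly the $\epsilon/\sqrt M$ (not $\epsilon^2/\sqrt M$ or $\epsilon/M$) dependence claimed.
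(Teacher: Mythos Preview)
Your Step~3 has a genuine gap: a small harmonic measure of the boundary arc $I=\overline\delta\cap S^1$ does \emph{not} by itself bound $\diam(\delta)$. For a counterexample in $\DD$, take $I$ to be a tiny arc near $1$ and let the crosscut $c$ be a ``V'' from one endpoint of $I$ to a point $0.01$ on the real axis and back to the other endpoint. Then $0$ lies in the large component, $\delta$ is the thin sliver between the two arms of the V, $\omega_{\DD,0}(I)$ is as small as you like, yet $\diam(\delta)\approx 1$. Your strong-Markov claim ``$\omega_{\DD,0}(c)\le\text{const}\cdot\omega_{\DD,0}(I)$'' fails here precisely because the conditional probability $\omega_{\DD,z}(I)$ for $z\in c$ near $0.01$ is itself tiny; the inequality you wrote actually goes the other way. (There is a second, smaller issue in Step~2: Beurling gives a \emph{lower} bound $\omega_{\DD\setminus K,z}(K)\ge\omega_{\DD\setminus K^*,-|z|}(K^*)$, so it cannot be applied directly to majorize the harmonic measure of the projected slit; one must apply it to the complement, as the paper does via an inversion.)

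The paper supplies exactly the two missing ingredients. First, Lemma~\ref{1} uses the Koebe One-Quarter and Distortion Theorems---not harmonic measure---to show that every point of $c=\varphi(\gamma)$ satisfies $|z|>1-3\epsilon/\sqrt M$; this controls the \emph{radial} extent of $\delta$ and rules out the ``V'' obstruction. Second, even confined to a thin annulus, $c$ could a priori wind around; the paper controls the \emph{angular} extent by a radial-slit trick: for each extreme angle of the projection of $c$ one removes the image of a radial segment $f([r,1])$, thereby creating a new crosscut $\gamma'$ of the slit domain to which Lemma~\ref{2} (your Beurling step, done correctly) applies again. Combining the radial bound ($\le 3\epsilon/\sqrt M$) with the angular bound ($\le 25\epsilon/\sqrt M$) yields the constant $30$. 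Your plan is missing both the Koebe input and the slit argument; without them the passage from the harmonic-measure estimate to the diameter estimate cannot be completed.
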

Similar statements are well-known in the literature, likely beginning with the works of Lavrientieff \cite{Lav} and Ferrand \cite{Fer}.
\noindent
As an immediate corollary, we obtain
\begin{corollary}\label{ContinuityToCaratheodory}
Let $h(\delta)$ be a  modulus of fluctuation of $f=\varphi^{-1}$. Then we have the following estimate for Caratheodory modulus:
$$\eta(\epsilon^2)\leq h\left(\frac{30\epsilon}{\sqrt{M}}\right).$$
\end{corollary}
\begin{proof}
Let two points $w_1$ and $w_2$ be separated from $ w$ by a crosscut of length at most $\epsilon^2$. Let $z_{1,2}=\phi(w_{1,2})$. By Proposition \ref{Lavrentiev}, $|z_1-z_2|\leq\frac{30\epsilon}{\sqrt{M}}$. Thus
$$|w_1-w_2|=|f(z_1)-f(z_2)|\leq\frac{30\epsilon}{\sqrt{M}}.$$
\end{proof}

Let us now to turn to the proof of Proposition \ref{Lavrentiev}. We use the following lemmas.
\begin{lemma}\label{1} If  $\diam(\gamma)<\epsilon^{2}$ then for any $z\in\varphi(\gamma)$,  $$|z|>1-\frac{3\epsilon}{\sqrt{M}}.$$
\end{lemma}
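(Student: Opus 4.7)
The approach is to apply the Koebe One-Quarter Theorem together with the Koebe growth estimate to a suitably renormalized univalent map built around the point $z = \varphi(\zeta)$ for some $\zeta \in \gamma$. The key observation is that a single application of the one-quarter theorem bounds the product $(1-|z|^2)|f'(z)|$ from above in terms of $\dist(\zeta,\partial W) \leq \epsilon^2$, while the growth estimate, evaluated at the preimage of $w$, bounds the same quantity from below in terms of $|\zeta - w| \geq M$. Combining the two inequalities will give the desired square-root bound.

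Concretely, pick $\zeta \in \gamma$ and set $z = \varphi(\zeta)$. Let $\phi_z(\omega) = (\omega + z)/(1 + \bar z \omega)$ be the disk automorphism with $\phi_z(0) = z$, and define the normalized univalent function
\[
g(\omega) \;=\; \frac{f(\phi_z(\omega)) - \zeta}{(1-|z|^2)\,f'(z)},
\]
which satisfies $g(0) = 0$ and $g'(0) = 1$. Because $\diam \gamma \leq \epsilon^2$ and both endpoints of $\overline{\gamma}$ lie on $\partial W$, we have $\dist(\zeta, \partial W) \leq \epsilon^2$. The Koebe One-Quarter Theorem applied to $g$ gives $g(\DD) \supset B(0, 1/4)$, which, after rescaling, reads
\[
(1-|z|^2)\,|f'(z)| \;\leq\; 4\,\dist(\zeta, \partial W) \;\leq\; 4\epsilon^2.
\]

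On the other hand, since $\phi_z(-z) = 0$ we have $g(-z) = (w-\zeta)/((1-|z|^2)f'(z))$, so $|g(-z)| \geq M/((1-|z|^2)|f'(z)|)$ by the hypothesis $\dist(\gamma, w) \geq M$. The Koebe growth estimate $|g(\omega)| \leq |\omega|/(1-|\omega|)^2$ evaluated at $\omega = -z$ then yields $|g(-z)| \leq |z|/(1-|z|)^2$. Cross-multiplying and chaining with the previous inequality,
\[
M(1-|z|)^2 \;\leq\; |z|(1-|z|^2)|f'(z)| \;\leq\; (1-|z|^2)|f'(z)| \;\leq\; 4\epsilon^2,
\]
so $1 - |z| \leq 2\epsilon/\sqrt{M} < 3\epsilon/\sqrt{M}$, as claimed. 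There is no substantive obstacle in the argument; the only subtlety is choosing the correct renormalization so that both Koebe estimates become available simultaneously at a single point. The hypothesis $\epsilon^2 < M/4$ plays no role in the derivation itself and serves only to guarantee that the resulting bound is nontrivial.
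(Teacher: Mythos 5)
Your proof is correct, and it reaches the bound by a genuinely different (and in one respect cleaner) route than the paper's. Both arguments share one half: the one-quarter theorem applied at $z$ gives $\tfrac14(1-|z|^2)|f'(z)|\le\dist(f(z),\partial W)\le\diam\gamma<\epsilon^2$. They differ in how $(1-|z|)^2$ gets coupled to $M$. The paper asserts $|f'(0)|\ge 4M$ and transports this lower bound from the origin to $z$ via the distortion theorem, arriving at $\dist(f(z),\partial W)\ge M(1-|z|)^2/8$; you instead evaluate the renormalized map $g$ at $\omega=-z$, the preimage of $w$, and use the upper Koebe growth bound $|g(\omega)|\le|\omega|/(1-|\omega|)^2$ to convert the chord estimate $|f(z)-w|\ge\dist(\gamma,w)\ge M$ directly into $M(1-|z|)^2\le(1-|z|^2)|f'(z)|$. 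What your version buys: it uses exactly the stated hypothesis $\dist(\gamma,w)\ge M$, whereas the paper's opening step is really a Schwarz-lemma bound of $|f'(0)|$ by $\dist(w,\partial W)$ (the one-quarter theorem as quoted gives the opposite inequality), so it tacitly assumes $w$ is at distance $\gtrsim M$ from all of $\partial W$, not merely from $\gamma$. Your constant ($2\epsilon/\sqrt M$ versus the paper's $2\sqrt2\,\epsilon/\sqrt M$) is also marginally sharper, and both clear the stated $3\epsilon/\sqrt M$. The one small addition needed for self-containedness: the growth bound $|g(\omega)|\le|\omega|/(1-|\omega|)^2$ is not among the preliminaries recorded in \secref{sec:complex} (only the distortion bound on the derivative is), so you should note that it follows from the distortion theorem by integration along a radius.
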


\begin{proof}
Let $f=\varphi^{-1}$ and let $ w=f(0)\in  W$. By Koebe One-Quarter Theorem, $|f'(0)|\geq 4 M$.
By Koebe Distortion Theorem,
$$|f'(z)|\geq 4 M\frac{1-|z|}{(1+|z|)^3}\geq M\frac{1-|z|}{2}.$$
 Another application of Koebe One-Quarter Theorem gives
$$\dist(f(z), \partial  W)\geq\frac14(1-|z|)|f'(z)|\geq M \frac{(1-|z|)^2}{8}.$$

Notice now that since $\gamma$ is a crosscut, $$\diam(\gamma)\geq\dist(f(z), \partial  W)\geq M \frac{(1-|z|)^2}{8}.$$
The lemma immediately follows from the last inequality.

\end{proof}

\begin{lemma}\label{2}
Let $\gamma$ be a crosscut of $ W$ and let $M=\dist(\gamma,  w)$. Let $$\diam(\gamma)\leq \epsilon^2<M/4.$$ Suppose $\varphi(\gamma)\subset \bar{D}$ ends at $z_{1}$ and $z_{2}$, then $$|z_{1}-z_{2}|<\frac{8\epsilon}{\sqrt M}.$$

In addition, if $K$ is the part of $\partial  W$ separated by $\gamma$ from $ w$, then
$$\omega_{ W, w}(K)\leq\frac{4\epsilon}{\pi\sqrt{M}}.$$
\end{lemma}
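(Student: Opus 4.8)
\emph{Strategy.} Both assertions will be reduced to a single estimate. Pick an endpoint $a\in\partial W$ of the crosscut $\gamma$. Since $\diam\gamma\le\epsilon^{2}$ we have $\gamma\subset\overline{B(a,\epsilon^{2})}$, and since $a\in\overline\gamma$ we have $\dist(w,a)\ge\dist(w,\gamma)=M$. Let $p$ denote the probability that a Brownian motion started at $w$ reaches the disk $\overline{B(a,\epsilon^{2})}$ before it exits $W$. The key claim is
$$p\ \le\ \frac{4\epsilon}{\pi\sqrt M}.$$
Because $W$ is simply connected and $\infty\notin W$, the connected component $E$ of $\CC\setminus W$ containing $a$ is unbounded; hence $E$ meets every circle $\{|z-a|=s\}$, $s>0$. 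Thus a Brownian path travelling from distance $\ge M$ down to $\overline{B(a,\epsilon^{2})}$ must, at each intermediate scale, thread past a connected obstruction crossing the corresponding circle. By the Beurling Projection Theorem (cf.~\cite{Garnett-Marshall}) the obstruction maximizing $p$ is a single radial ray issuing from $a$; for that extremal configuration the map $z\mapsto\log\bigl((z-a)/\epsilon^{2}\bigr)$ carries the complement of the ray conformally onto the half-strip $\{\Re\zeta>0,\ 0<\Im\zeta<2\pi\}$, sending $\partial B(a,\epsilon^{2})$ to the finite edge and $w$ to a point with $\Re\zeta=\log(M/\epsilon^{2})$. The classical formula for the harmonic measure of the end of a half-strip then gives $p\le\frac{4}{\pi}e^{-\frac12\log(M/\epsilon^{2})}=\frac{4\epsilon}{\pi\sqrt M}$.

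\emph{The harmonic measure bound.} Let $W_{0}$ be the component of $W\setminus\gamma$ containing $w$. Since $\gamma$ separates $w$ from $K$, the Majoration Principle (Proposition~\ref{majoration}) applied to $W_{0}\subset W$ gives $\omega_{W,w}(K)\le\omega_{W_{0},w}(\gamma)$ (the two crosscut endpoints contribute zero harmonic measure). A Brownian path from $w$ that meets $\gamma$ before $\partial W_{0}$ must first enter $\overline{B(a,\epsilon^{2})}\supset\gamma$ while still inside $W$, so $\omega_{W_{0},w}(\gamma)\le p$. Hence $\omega_{W,w}(K)\le p\le\frac{4\epsilon}{\pi\sqrt M}$.

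\emph{The diameter bound.} Put $D_{0}=\varphi(W_{0})$, the component of $\DD\setminus\varphi(\gamma)$ containing $0=\varphi(w)$, and let $A\subset S^{1}$ be the arc with endpoints $z_{1},z_{2}$ that lies on the boundary of the other component $\varphi(N_{\gamma})$. The crosscut $\varphi(\gamma)$ separates $0$ from $A$, so the Majoration Principle applied to $D_{0}\subset\DD$ gives $\omega_{\DD,0}(A)\le\omega_{D_{0},0}(\varphi(\gamma))$; by conformal invariance of harmonic measure under $\varphi\colon W_{0}\to D_{0}$ the right‑hand side equals $\omega_{W_{0},w}(\gamma)\le p$. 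Since harmonic measure from the center of the disk is normalized arclength, $|A|=2\pi\,\omega_{\DD,0}(A)\le 2\pi p\le\frac{8\epsilon}{\sqrt M}$. Finally $|z_{1}-z_{2}|=2\sin(|A|/2)<|A|$ whenever $z_{1}\ne z_{2}$ (and $|z_{1}-z_{2}|=0$ otherwise), so $|z_{1}-z_{2}|<\frac{8\epsilon}{\sqrt M}$ in all cases.

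\emph{Main obstacle.} Everything here is soft once one has the key claim $p\le\frac{4\epsilon}{\pi\sqrt M}$. The qualitative bound $p=O\bigl((\epsilon^{2}/M)^{1/2}\bigr)$ is easy, by iterating over the dyadic annuli $\{2^{k}\epsilon^{2}<|z-a|<2^{k+1}\epsilon^{2}\}$ and using that $E$ crosses each internal circle; the work is in extracting the exact exponent $\tfrac12$ and the constant $\tfrac4\pi$, which is precisely what the radial‑ray extremal of the Beurling Projection Theorem together with the explicit half‑strip computation provides. One must also check that the domains involved ($W$, $W_{0}$, $D_{0}$ and the domain underlying $p$) are hyperbolic with Brownian motion exiting almost surely, so that Proposition~\ref{majoration} and conformal invariance of harmonic measure apply; this is where the hypothesis $\epsilon^{2}<M/4$ is used, keeping $w$ outside $\overline{B(a,\epsilon^{2})}$ and the annulus $\{\epsilon^{2}<|z-a|<M\}$ nondegenerate.
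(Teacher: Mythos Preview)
Your proof is correct and follows essentially the same route as the paper's. Both arguments reduce everything to a single harmonic-measure estimate obtained via the Majoration Principle and the Beurling Projection Theorem, followed by the explicit slit-domain computation (your half-strip formula is the same as the paper's $\tfrac{4}{\pi}\arctan(\epsilon/\sqrt{M})$ after a square-root map). The only cosmetic differences are that you isolate the hitting probability $p$ and derive both conclusions from it (whereas the paper first observes that the arc-length identity $|[z_1,z_2]|=2\pi\,\omega_{W,w}(K)$ makes the second claim imply the first), and that you center the ball at an endpoint $a\in\partial W$ of $\gamma$ rather than at an arbitrary point $x\in\gamma$ as the paper does.
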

\begin{proof}
First observe that by conformal invariance of harmonic measure
$$ \text{length of arc}[z_1,z_2]=2\pi\omega_{ W, w}( K).$$
Thus $$|z_1-z_2|\leq2\pi\omega_{ W, w}( K),$$ so the second statement of the Lemma implies the first one.

Let $x$ be any point of $\gamma$, and
let $ W'$ be the component of $ W\setminus B(x, \epsilon^2)$ containing $w$. Let $K_1=\partial  W'\cap  \partial B(x,\epsilon^2)$ and $K_2=\partial  W'\setminus K_1$.  Since $\diam\gamma\leq\epsilon^2$, we see that $W'$ is simply-connected.  Thus we can apply the Majoration Principle (Proposition \ref{majoration}) to show that
\begin{equation}\label{eq:proj1}\omega_{ W, w}( K)\leq \omega_{ W', w} (K_1).\end{equation}

\begin{figure}
\centerline{\includegraphics[width=\textwidth]{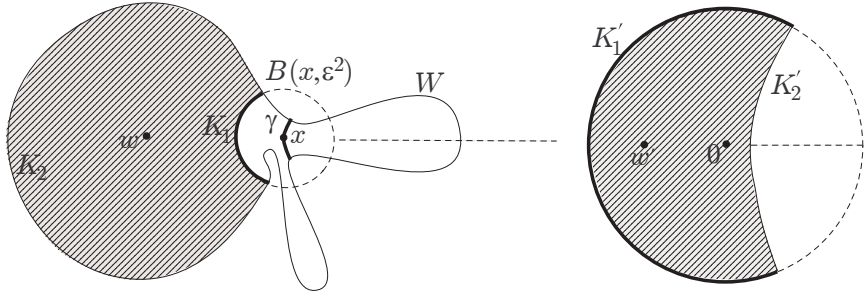}}
\caption{\label{fig-beurling}An illustration to the proof of \thmref{main1}. The domain $W'$ is the filled-in subdomain of $W$. For simplicity, $K_1$ is shown to be connected.
}
\end{figure}
We will use Beurling Projection Theorem to obtain an upper bound on $\omega_{ W', w} (K_1)$. By applying a shift by $-x$ and a rotation, we may assume that $x=0$ and that $w$ lies on the negative real axis.

Let us consider the inversion map
$g(w)=\frac{\epsilon^2}{w}$. It maps $W'$ to a subdomain $g(W')\subset\DD$. We set $$w'=g(w),\; K'_1=g(K_1)=\partial g(W')\cap S^1,\; K'_2=g(K_2)=\partial g(W')\cap \DD.$$

Let $K^*_2$ be the circular projection of $K_2'$. Since by our assumptions $w'=-|w'|$,  Beurling Projection Theorem implies that

$$\omega_{g(W'), w'}(K'_2)\geq \omega_{\DD\setminus K^*_2, w'}(K^*_2),$$
or, since $\omega_{g(W'), w'}(K'_2)+\omega_{g(W'), w'}(K'_1)=1$ and $\omega_{\DD\setminus K^*_2, w'}(K^*_2)+\omega_{\DD\setminus K^*_2, w'}(S^1)=1$,
$$\omega_{g(W'), w'}(K'_1)\leq \omega_{\DD\setminus K^*_2, w'}(S^1)$$
Now we can use the conformal invariance of harmonic measure to see that
$$\omega_{W', w}(K_1)\leq  \omega_{g^{-1}\left(\DD\setminus K^*_2\right), w}(\partial B(x,\epsilon^2))$$

 Another application of the Majoration Principle (Proposition \ref{majoration}) shows that
 $$\omega_{W', w}(K_1)\leq\omega_{w, W_0}(\partial B(x,\epsilon^2)),$$
 where $W_0=\riem\setminus (B(x,\epsilon^2)\cup [0,\infty))$.

The last quantity can be shown to be equal to
$$\frac{4}{\pi}\tan^{-1} \left(\frac{\epsilon}{\sqrt{M}}\right)\leq\frac{4\epsilon}{\pi\sqrt{M}}$$ (see \cite{Garnett-Marshall}, page 107).
 The estimate  together with inequality \eqref{eq:proj1} immediately implies the second statement of the lemma.
\end{proof}

\begin{proof}[Proof of the Proposition \ref{Lavrentiev}] 
We first show that the radial projection of $\varphi(\gamma)$ onto the unit circle has the length bounded by $\frac{25\epsilon}{\sqrt{M}}$. Let this projection be the arc $[z'_1, z'_2]$.

Let us first estimate $|z'_1-z_1|$. Without loss of generality we can assume that $z'_1=1$. Let
$$r=\max\left\{x>0\ :\ x\in\varphi(\gamma)\right\},\; w=\varphi^{-1}(r)=f(r)\text{  and }I=[r,1].$$ Let $\gamma'$ be the  arc of $\gamma$ joining $w$ to the end of $\gamma$ corresponding to $z_1$. Then $\gamma'$ is a crosscut of the domain $ W'=  W\setminus f(I)$ with $$\dist( w, \partial  W')\geq M-\epsilon^2\geq M/2.$$

Let $K'$ be the part of the boundary of $ W'$ separated by $\gamma'$ from $ w$. By Lemma \ref{2},
 $$\omega_{ W', w}( K')\leq\frac{4\sqrt{2}\epsilon}{\pi\sqrt{M}}.$$ Notice now that by invariance of harmonic measure and symmetry we have
$$\omega_{ W', w}(K')=\omega_{\DD\setminus I,0}(\phi(K))=\frac{1}{2}\omega_{\DD\setminus I,0}(I\cup[z_1,\overline{z_1}]), $$
where $[z_1,\overline{z_1}]$ denote here the arc of the unit circle joining $z_1$ and $\overline{z_1}$.

Another application of majoration principle shows that

$$|z_1-z'_1|\leq1/2 \text{ length of }[z_1,\overline{z_1}]=\pi\omega_{\DD,0}(I\cup[z_1,\overline{z_1}])\leq\pi\omega_{\DD\setminus I,0}(I\cup[z_1,\overline{z_1}])\leq \frac{12\epsilon}{\sqrt{M}}.$$

Since the same estimate holds for $|z_1-z'_2|$, we get the desired estimate on the length of the projection.

To obtain the statement of the proposition, we just need to combine this estimate with Lemma \ref{1}.

\end{proof}
We are now ready to prove \thmref{main1}.
\begin{proof}[Proof of \thmref{main1}]
Consider a computable representation $(t_j)$ of the crosscut $p$. To compute $\phi(p)$ with presicion $\delta$, we compute $j$ such that $\diam t_j<10^{-3}M\delta^2$, where $M$ is a lower estimate on the distance from $t_1$ to $w$. Then we compute $\phi(t_j(1/2))$ with presicion $10^{-3}\delta$. By Proposition \ref{Lavrentiev}, this value is within $\delta$ of $\phi(p)$. 
\end{proof}

\subsection{Proof of \thmref{main2}}
Let us show that $\hat\phi$ is computable, computability of $\hat f$ will follow from \thmref{invert}.

Note that given oracles for $\partial\Omega$ and $w$, the Caratheodory distance between two interior points is computable by using, say, computable interior polinomial approximation.

Let now $x_n\to x$ be a sequence of ideal points in $\widehat W$ with $\dist_C^W(x_n,x)<2^{-n}$. We compute a rational number
$M>0$, which is a lower bound
on $\dist(x, w)$. We set
$$\eps_k=\frac{\sqrt{M}2^{-{2k}}}{60},$$
and let $ n_k\geq 2k-\log_2\frac{\sqrt{M}}{60}$ be a natural number.
Set $y_k\equiv x_{n_k}$, and compute $z_k\in\DD$ such that
$$|z_k-\phi(y_k)|<\eps_k.$$
By \propref{Lavrentiev},
$$|z_k-\hat\phi(x)|<2^{-n}.$$

\hspace{0.9\textwidth}$\Box$

\subsection{Proof of \thmref{main3}}


By \thmref{war1} of Warshawski, we know that
$$
\cO(r)\leq \eta\left( \left(\frac{2\pi A}{\log 1/r}  \right)^{1/2}\right)
$$
where $A$ is an upper bound on the area of the domain $U$.  Assume  $f$ and $\eta$ are computable.

 Let $z\in \bar{U}$ and $\varepsilon>0$.  We now show how to compute $\bar{f}(z)$ at precision $\varepsilon$.  Start by choosing a computable but not rational number $r$ such that
 $$
 \eta\left( \left(\frac{2\pi A}{\log 1/(2r)}  \right)^{1/2}\right) < \frac{\varepsilon}{2}.
 $$

Then take a rational  approximation $z'$ of $z$ such that $|z-z'|<r/10$.  Since $|z'|\neq 1-r$, we can decide whether $|z'|>1-r $ or $|z'|<1-r$.  If $|z'|>1-r $, then compute $f(z')$ at precision $\varepsilon/2$. By \thmref{war1}, this is an $\varepsilon$ approximation of $\bar{f}(z)$.  If $|z'|<1-r$, then $z\in D$, and therefore we can just compute $f(z)$.

For the converse, assume  $\bar{f}$ is computable. Since $\bar{D}$ is computably compact, so is the set $\{|z-z'|\leq\delta\} \subset \bar{D}\times \bar{D}$ and therefore we can compute, by Proposition \ref{modcomp},  the modulus of fluctuation of $\bar{f}$:
 $$h(\delta):=\sup_{|z,z'|\leq \delta}|\bar{f}(z)-\bar{f}(z')|.$$

 Computability of the rate decay of  $\eta(\delta)$ now follows from Corollary \ref{ContinuityToCaratheodory}.

\hspace{0.9\textwidth}$\Box$

\begin{rem}
An alternative proof of computability of $\bar f$ in \thmref{main3} can be obtained using the second theorem of Warshawski: we can compute a sequence of polygons approximating $\partial W$ with precision $2^{-n}$ in Hausdorff sense and with the same Carath{\'e}odory modulus and then apply the estimate from \thmref{war2}.
\end{rem}

\subsection{Proof of \thmref{main4}}
Let $B\subset \NN$ be a lower-computable, non-computable set.
Let $I$ be the square $[0,1]\times[0,1]$.
Set $x_{i}=1-1/{2i}$. The boundary $\partial W$ is constructed by modifying $I$ as follows.  If $i\notin B$,
then we add a straight line to $I$ going  from $(x_{i},1)$ to $(x_{i},x_{i})$.
We call these  {\it $i$-lines}.  If $i\in B$ and it is enumerated in stage $s$, then remove the segment
from $(x_{i}-s_{i},1)$ to $(x_{i}+s_{i},1)$ where
$$s_{i}=\min\{2^{-s}, 1/(3i^{2})\}.$$ To close the domain, we join by straight lines $(x_{i}-s_{i},1)$
to $(x_{i}-s_{i},x_{i})$  to $(x_{i}+s_{i},x_{i})$ to $(x_{i}+s_{i},1)$. Call these {\it $i$-fjords}.
 This completes the construction of $\partial W$ (see Figure \ref{fig-square}).

We now show how to compute a $2^{-s}$ Hausdorff approximation of the boundary. Start by running an algorithm $\cA$ enumerating
 $B$ for $s+1$ steps. For all those $i$'s that have been enumerated so far, draw the corresponding $i$-fjords.
For all the other $i$'s, draw a $i$-line.
This is clearly a $2^{-s}$ approximation of $W$ since for any $i$ enumerated after the $s+1$ steps, the Hausdorff distance between the $i$-line and the $i$-fjord is less than $2^{-s}$. There clearly exists a computable Carath{\'e}odory modulus.
For example we can take $$\eta(r)=2\sqrt r \text{ for }r<1.$$

\begin{figure}
\centerline{\includegraphics[height=0.3\textheight]{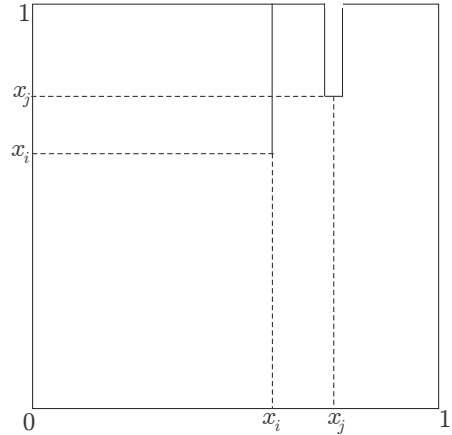}}
\caption{\label{fig-square}An illustration to the proof of \thmref{main4}: a square with an $i$-line and a $j$-fjord.
}
\end{figure}

Assume that the modulus of local
connectivity $m(r)$ is also computable. We then arrive at a contradiction by showing that $B$ is a computable set. First,
using monotonicity of $m(r)$, we can, for every value of $i\in\NN$, compute
$r_i\in \QQ$ such that
$$m(2\cdot 2^{-r_i})<\frac{x_i}{2}.$$ It then follows, that
if $i\in B$ then $i$ is enumerated by $\cA$ in fewer than $r_i$ steps. Our algorithm to compute $B$ will emulate  $\cA$
for $r_i$ steps to decide whether $i$ is
an element of $B$ or not.

\hspace{0.9\textwidth}$\Box$

\subsection{Proof of \thmref{main5}}
To begin constructing the Jordan domain $W_1$, let us choose a sequence of points $w_n\in S^1$ such that:
\begin{itemize}
\item $w_n=\exp(i\theta_n)$ with $\theta_n\searrow 0$;
\item $|\theta_n-\theta_{n+1}|=2^{-(n+q(n))}$.
\end{itemize}
For $w\in S^1$ and $r\in(0,0.5)$ let us define a {\it wedge}
$$Q(w,\eps,r)\equiv\{(1-s)\exp\left(i \left(1-\frac{s}{r}\right)\theta\right)\text{ where }s\in[0,r],\; |\theta-\arg(w)|\leq\eps \}.$$
Let us also define an auxiliary domain
$$\DD_{r}\equiv\DD\setminus\left\{z=s\exp(i\theta)\text{ where }s>1-r,\ |\theta|<r\right\},$$
where again $r\in(0,0.5)$.

Let $z_n\in S^1$ be the midpoint of the arc $[w_n,w_{n+1}]$.

We claim that we can compute
a subsequence $z_{n_k}$, and a sequence $r_k\searrow 0$ such that denoting  $$Q_{k}\equiv Q(z_{n_k},(\theta_{n_k}-\theta_{n_{k+1}})/4,r_k),$$ we have:
\begin{itemize}
\item $r_k<\frac{1}{k}$;
\item $Q_k\subset \overline{\DD_{r_k}}$;
\item let $W^k$ be the connected component of $0$ in $\DD_{r_k}\setminus (\cup_{j=1}^{k-1} Q_j)$ (see Figure \ref{fig-wedges}). Then the harmonic measure
\begin{equation}\label{main5:1}\omega_{W^k, 0}(Q_j )\geq 2^{-n_j}\text{ for all }j\leq k-1\end{equation}

\end{itemize}
Let us argue inductively. For the initial step we fix $n_1=1$. By continuity considerations there exists $r_1$ such that
the harmonic measure
$$\omega_{\DD\setminus Q_1,0}(Q_1)>2^{-1}. $$
Again by continuity, there exists $r_2$ such that (\ref{main5:1}) holds for $k=2$. Now such a pair $Q_1$, $r_2$
 can be computed by \thmref{main2} using an exhaustive search.

Let us assume that $z_{n_1},\ldots, z_{n_{k-1}}$ and $r_1,\ldots,r_k$ have already been constructed.
Set $$\delta_k\equiv \exp\left( -\frac{\pi^2}{2r_k^2}\right).$$
Suppose, $Q=Q(w,\eps,r_k)\subset \CC\setminus W^k$.
Set $\Omega_m\equiv \DD\setminus (\cup_{j=1}^{m} Q_j)$.
By Wolff's Lemma \eqref{eq:wolff},
\begin{equation}
\label{main5:2}
\omega_{\Omega_{k-1}\setminus Q,0}(Q)>\delta_k.
\end{equation}
Select $n_k$ so that
\begin{itemize}
\item $w_{n_k}\notin W^{k-1}$, and
\item $\delta_k>2^{-n_k}$.
\end{itemize}
Then, by \eqref{main5:2},
$$ \omega_{\Omega_{k},0}(Q_k)>\delta_k>2{-n_k}.$$
By continuity considerations, there exists $r_{k+1}<\frac{1}{k+1}$ such that
$$ \omega_{\Omega_{k}\cap\DD_{r_{k+1}},0}(Q_k)>2{-n_k}.$$
By \thmref{main2}, such $r_{k+1}$ can be computed, using an exhaustive search.

By Majoration Principle \ref{majoration},
$$\omega_{W^k, 0}(Q_j )\geq 2^{-n_j}\text{ for all }j\leq k ,$$
and the induction is completed.

Let $$W_1\equiv \cup W^k=\DD\setminus(\cup_{j=1}^\infty Q_j). $$
Denote $\phi$ the normalized conformal map $W_1\to\DD$ with $\phi(0)=0,$ $\phi'(0)>0$. Set $\phi(w_k)=\exp (2\pi i \zeta_k)$.
Using Majoration Principle again,
$$|\phi(w_{k+1})-\phi(w_k)|>\dist_{\RR/\ZZ}(\zeta_{n_k},\zeta_{n_{k+1}})> 2^{-n_k}.$$

\begin{figure}
\centerline{\includegraphics[height=0.3\textheight]{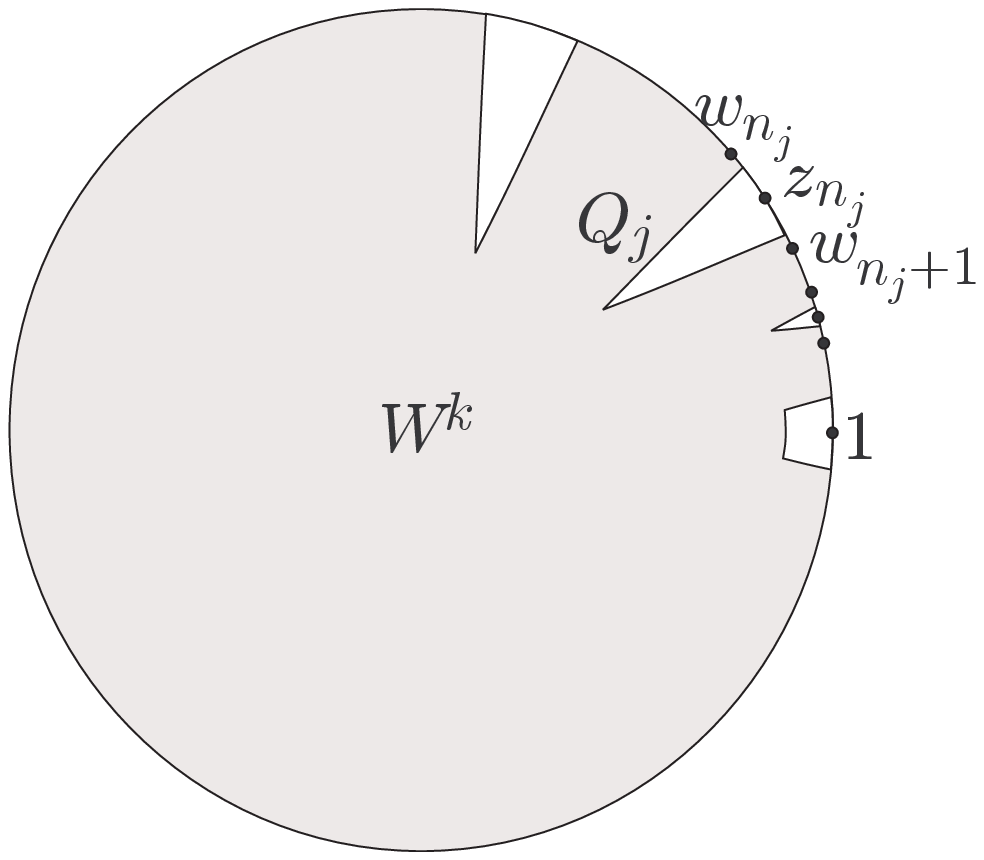}}
\caption{\label{fig-wedges}An illustration to the proof of \thmref{main5}.
}
\end{figure}

Observe that  at least $q(n)+1$ bits are needed to separate $w_{n}$ from $w_{n+1}$, and therefore $q(n)+1$ computer steps. Thus, to compute the function $\phi$ with
precision $2^{-n_k}$, we need the time of at least $q(n_{k})+1$, and the proof of the first half of the theorem is finished.

For the benefit of a reader without prior experience with similar Computability Theory arguments, let us informally summarize the above
proof as follows: to estimate the value of the conformal mapping $\phi$ for the domain $W_1$ constructed above with precision $2^{-n}$ the
computation has to be carried out with a very high precision (with at least $2^{-(q(n)+1)}$ dyadic digits). Both in theory and in computing practice,
such computations are costly, and, in particular, would require processing time of at least $O(q(n))$.

For the second part of the theorem, note that by
Corollary \ref{ContinuityToCaratheodory} and \propref{modcon2} it is sufficient to do the following: for every non-decreasing computable function $t:\NN\to \NN$ with $t(n)\to\infty$
construct a Jordan domain $W$ with Carath{\'e}odory modulus $\eta$ such that
$$\eta(2^{-t(n)})\geq 2^{-n}. $$
To this end, let us set $z_k=\exp(i2^{-k})$, $r_k=\min(\frac14,2^{-t(k)})$, and denote
$$Q_k\equiv Q(z_k,2^{-k-2},r_k). $$
Then
$$ W\equiv \DD\setminus (\cup_{j=1}^\infty Q_j)$$
has the desired property.
\hspace{0.9\textwidth}$\Box$

\bibliographystyle{plain}

\end{document}